   \newcommand{\Hom}{\operatorname{Hom}}
\newcommand{\Int}{\operatorname{Int}}
   \theoremstyle{plain}
   \newtheorem{thm}{Theorem}[section]
   \newtheorem{prop}[thm]{Proposition}
   \newtheorem{lemma}[thm]{Lemma}  
   \newtheorem{cor}[thm]{Corollary}
   \theoremstyle{defn}
   \newtheorem{defn}[thm]{Definition}
   \theoremstyle{remark}
\newtheorem{assumption}[thm]{Assumption}
   \numberwithin{equation}{section}
        \date{\today}
\title[KMS and $\mathrm{KMS}_{\infty}$]{Equilibrium and ground states from Cayley graphs}  
\author{Johannes Christensen}
\author{Klaus Thomsen}
\date{\today}
\email{johannes@math.au.dk; matkt@math.au.dk}
\address{Institut for Matematik, Aarhus University, Ny Munkegade, 8000 Aarhus C, Denmark}
\begin{document}

\maketitle

\section{Introduction} The introduction of graph $C^*$-algebras more than 20 years ago by Pask, Kumjian, Raeburn and Renault, \cite{KPRR}, initiated many exciting developments in $C^*$-algebra theory and gave opportunities for new connections to other fields of mathematics. One such is the connection to geometric group theory coming from Cayley graphs and the main purpose with this paper is to present a first investigation of the quantum statistical models which arise by considering generalized gauge actions on the $C^*$-algebra of a Cayley graph. From previous work involving infinite graphs it has become clear that the study of KMS states and weights for generalized gauge actions on the $C^*$-algebra of a infinite graph is closely related to the theory of denumerable Markov chains and random walks, and that results from these fields can provide solutions to some of the questions motivated by the physical interpretations. In many cases, however, these questions turn into problems that are notoriuosly very difficult, and only little help is offered by the results from other fields of mathematics. This is intriguing because the results in \cite{Th1} show that infinite graphs offer a richness in the structure of KMS states and weights which can not be realized with finite graphs and which is only paralleled by the constructions made by Bratteli, Elliott and Kishimoto in the 80's, cf. \cite{BEK}. Furthermore, several of the issues that arise from the operator algebra setting and its interpretation as a model in quantum statistical mechanics have not, or only very marginally been considered from the point of view of Markov chains or random walks and they call for new ideas.         

In this paper we set up the general framework for the study of KMS states of generalized gauge actions on the graph $C^*$-algebra of a Cayley graph, or more precisely the restriction of these actions to the corner of the algebra obtained by considering the neutral element of the group as a distinguished vertex. These states are in one-to-one correspondence with the vectors that are normalized and harmonic for a matrix over the group which depends on the inverse temperature $\beta$. For fixed $\beta$ one can in this way translate the problem of finding the $\beta$-KMS states to one which deals with a stochastic matrix and hence with a random walk on the group. This opens up for the possible exploitation of the many results there are about harmonic functions for random walks on groups. However, the dependence on the inverse temperature $\beta$ presents issues for which there are no analogues in the random walk setting, for example the question about the behaviour of the equilibrium states as the temperature goes to zero which we study in detail in the present work. 

For abelian and more generally nilpotent groups there are results which describe the harmonic vectors of all non-negative matrices over the group that are consistent with the Cayley graph and where the passage to a stochastic matrix is therefore not necessary. This allows us to give a complete description of the KMS and $\mathrm{KMS}_{\infty}$ states for the generalized gauge actions with a potential function defined from a strictly positive function on the set of generators when the graph in question is the Cayley graph of a nilpotent group. The structure depends almost entirely on the abelianization of the group and from the methods we employ it becomes clear that a similar structure is present for arbitrary groups. To make this clear, and because it shows that our results have bearing for all finitely generated groups, we introduce \emph{abelian} KMS and $\mathrm{KMS}_{\infty}$ states in the general setting. They are the KMS and $\mathrm{KMS}_{\infty}$ states that arise from the harmonic vectors that factor through the abelianization of the group. We show that when the abelianization of the group is finite there is a unique abelian KMS state, and when the rank of the abelianization is $n \geq 1$ there is a critical inverse temperatur $\beta_0 > 0$ such that there are no abelian $\beta$-KMS states when $\beta < \beta_0$, a unique abelian $\beta_0$-KMS state and for $\beta > \beta_0$ the simplex of abelian $\beta$-KMS states is affinely homeomorphic to the Bauer simplex of Borel probability measures on the $(n-1)$-sphere. Based on this we determine the abelian $\mathrm{KMS}_{\infty}$ states; the ground states in the quantum statistical model that are limits of abelian $\beta$-KMS states when $\beta$ tends to infinity, \cite{CM}. The result is that they also form a Choquet simplex affinely homeomorphic to the Bauer simplex of Borel probability measures on the $(n-1)$-sphere. This may be expected given the description of the abelian $\beta$-KMS states, but one should bare in mind that it is a priori not even clear that the $\mathrm{KMS}_{\infty}$ states constitute a convex set and it is also not clear, in view of the massive collapse at the critical value $\beta_0$, that collapsing does not occur at infinity. The proof that the $(n-1)$-sphere 'survives to infinity' occupies almost half of the paper and involves a great deal of finite dimensional convex geometry. 

In a final section we consider two examples; the Heisenberg group and the infinite dihedral group. In both examples we consider a canonical set of generators and the gauge action on the resulting graph $C^*$-algebras. Since the Heisenberg group is nilpotent we easily find all KMS and $\mathrm{KMS}_{\infty}$ states by using our general results. For the infinite dihedral group, which is not nilpotent, we need to do a bit more work and find that there is only one abelian KMS state and a richer collection of general KMS states. This illustrates that for groups that are not nilpotent the structure of KMS states and $\mathrm{KMS}_{\infty}$ states is more complicated, and the abelian states will only give a (small) part of the picture. For general finitely generated groups, the complete picture is presently way out of reach.

While this work is the first to study KMS states and ground states for actions on the $C^*$-algebra of Cayley graphs, $\mathrm{KMS}_{\infty}$ states have been investigated in other cases, for example in \cite{LRR}, \cite{LR}, \cite{CL}, following their introduction by Connes and Marcolli in \cite{CM}. In many cases the set of all ground states as they are usually defined, e.g. in \cite{BR}, constitute a much larger set. This is also the case in our setting, where the set of ground states can be identified with the state space of a sub-quotient of the algebra, cf. \cite{Th2}.

\section{Generalized gauge actions on a pointed Cayley graph }
 Given a group $G$ and a finite set $Y$ of generators of $G$ there is a natural way of defining a directed graph $\Gamma = \Gamma (G,Y)$ whose vertexes are the elements of $G$ and with an edge (or arrow) from $g \in G$ to $h \in G$ iff $g^{-1}h \in Y$. This is the \emph{Cayley graph} and it provides the main tool for the geometric study of discrete finitely generated groups. Since the introduction of $C^*$-algebras from directed graphs, \cite{KPRR}, the Cayley graphs have provided a way of associating to a finitely generated discrete group a $C^*$-algebra $C^*(\Gamma)$ very different from the full or reduced group $C^*$-algebra usually considered in relation to discrete groups. The algebra is the universal $C^*$-algebra
generated by a set 
$V(g,s), \ g \in G, \ s \in Y$, of partial isometries such that
\begin{equation}\label{CKrel2}
V(h,t)^*V(g,s) = \begin{cases} 0 \ & \ \text{when} \ h \neq g \ \text{or} \ t \neq s, \\
\sum_{y \in Y} V(gy,y)V(gy,y)^* \ & \text{when} \ h = g \ \text{and} \ t = s .
\end{cases}
\end{equation}  
For $g \in G$ we let $P_g$ denote the projection
$$
P_g = \sum_{y \in Y} V(gy,y)V(gy,y)^* .
$$
When $G$ is infinite the graph $\Gamma$ is also infinite and the $C^*$-algebra $C^*(\Gamma)$ is not unital. But the neutral element $e_0$ of $G$ defines the canonical unital corner
$ P_{e_0}C^*(\Gamma)P_{e_0}$
stably isomorphic to $C^*(\Gamma)$, and in this paper we will focus attention to this corner of $C^*(\Gamma)$. 

We emphasize that the only condition on $Y$ is that it generates $G$ as a semi-group, i.e. every element of $G$ is a product of elements from $Y$. For various reasons it is convenient to exclude the case where $Y$ only contains one element. $G$ is a finite cyclic group when this happens, a case we do not exclude when $Y$ contains at least two elements. Thus we make the following standing assumption:

\begin{assumption} It is assumed that $Y \subseteq G$ is a finite set containing at least two elements and that it generates $G$ as a semi-group. 
\end{assumption}

Under this assumption $C^*(\Gamma)$ and the corner $P_{e_0}C^*(\Gamma)P_{e_0}$ are simple $C^*$-algebras by Corollary 6.8 of \cite{KPRR}.

Let $F : Y \to \mathbb R$ be a function. The universal property of $C^*(\Gamma)$ guarantees the existence of a continuous one-parameter group of automorphisms $\gamma^F_t, t \in \mathbb R$, on $C^*(\Gamma)$ defined such that 
$$
\gamma^F_t(V(g,s)) = e^{i F(s)t} V(g,s).
$$
Note that $\gamma^F$ keeps the corner $ P_{e_0}C^*(\Gamma)P_{e_0}$ globally invariant and therefore defines a continuous one-parameter group of automorphisms, still denoted by $\gamma^F_t$, on $  P_{e_0}C^*(\Gamma)P_{e_0}$.

We aim now to identify $P_{e_0}C^*(\Gamma)P_{e_0}$ as a $C^*$-subalgebra of the Cuntz-algebra $O_n$ where $n = \# Y$, cf. \cite{Cu}. To simplify notation, set
$$
O_Y = O_{\# Y}.
$$
Let $R_g, g \in G$, be the right-regular representation of $G$ on
$l^2(G)$ and $V_s, s \in Y$, the canonical isometries generating $O_Y$. Let $1_g \in B(l^2(G))$ be the orthogonal projection onto the subspace of $l^2(G)$ spanned by the characteristic function of $g \in G$. The elements
$$
V(g,s) = 1_{gs^{-1}} R_s \otimes V_s
$$
in $B(l^2(G)) \otimes O_Y$ satisfy the relations (\ref{CKrel2}) and hence they generate a copy of $C^*(\Gamma)$. For $t = (t_1,t_2, \cdots, t_n) \in Y^n$, set
$\overline{t} = t_1t_2\cdots t_n \in G$,
and let $V_{t} \in O_Y$ be the isometry
$$
V_{t} = V_{t_1}V_{t_2} \cdots V_{t_n} .
$$
Set $Y^0 = \emptyset$ and $\overline{\emptyset} = e_0, \ V_{\emptyset} = 1 \in O_Y$. The elements
\begin{equation}\label{form}
 1_h R_{\overline{t} \ \overline{u}^{-1}} \otimes V_{t}V_{u}^* ,
\end{equation}
where $t \in Y^n, \ u \in Y^m, \ n,m \in \mathbb N \cup \{0\}, \  h \in G$, span a $*$-subalgebra of $B(l^2(G)) \otimes O_Y$, and since
$$
(1_{e_0} \otimes 1) (1_h R_{\overline{t} \ \overline{u}^{-1}} \otimes V_{t}V_{u}^*)(1_{e_0} \otimes 1) = \begin{cases} 1_{e_0} \otimes  V_{t}V_{u}^* & \ \text{when} \ h = e_0 \ \text{and} \ \overline{t} = \overline{u} \\ 0, \ & \ \text{otherwise}, \end{cases}
$$
it follows that the elements $V_{t}V_{u}^*$ with $\overline{t} = \overline{u}$ span a $*$-subalgebra of $O_Y$ whose closure, which we denote by $O_Y(G)$, is a copy of $P_{e_0}C^*(\Gamma)P_{e_0}$.

To formulate what the action $\gamma^F$ looks like in this picture, set $F(\emptyset) = 0$ and
$$
F(w) = \sum_{j=1}^n F(w_j) 
$$
when $w = (w_1,w_2,\cdots, w_n) \in Y^n$. It follows from the universal property of $O_Y$ that there is a one-parameter group $\alpha^F$ of automorphisms on $O_Y$ such that
$$
\alpha^F_t\left(V_wV_u^*\right) =  e^{it\left(F(w)-F(u)\right)} V_wV_u^*
$$
for all $w,u \in \bigcup_{n=0}^{\infty} Y^n$. Note that $\alpha^F$ leaves $O_Y(G)$ globally invariant and defines a one-parameter group of automorphisms on $O_Y(G)$.

We summarise the preceding considerations with the following

\begin{prop}\label{0103} Let $O_Y(G)$ be the closed span in $O_Y$ of the elements $V_wV_u^*$ with $\overline{w} = \overline{u}$. There is a $*$-isomorphism $\pi : P_{e_0}C^*(\Gamma)P_{e_0} \to O_Y(G)$ such that $\pi \circ \gamma^F_t = \alpha^F_t \circ \pi$ for all $t \in \mathbb R$.
\end{prop}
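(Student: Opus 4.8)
This proposition records and rounds off the construction carried out in the lines above, so the plan is simply to assemble those ingredients into a clean statement and, at the end, verify the intertwining identity. First I would use the universal property of $C^*(\Gamma)$: since the partial isometries $V(g,s) = 1_{gs^{-1}}R_s \otimes V_s$ in $B(l^2(G)) \otimes O_Y$ satisfy the relations \eqref{CKrel2}, as already observed, there is a $*$-homomorphism $\rho : C^*(\Gamma) \to B(l^2(G)) \otimes O_Y$ determined by $\rho(V(g,s)) = 1_{gs^{-1}}R_s \otimes V_s$; as $C^*(\Gamma)$ is simple (Corollary 6.8 of \cite{KPRR}) and $\rho$ is nonzero, $\rho$ is injective. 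Next I would compute $\rho(P_{e_0})$: from $P_{e_0} = \sum_{y \in Y} V(y,y)V(y,y)^*$, $\rho(V(y,y)) = 1_{e_0}R_y \otimes V_y$, unitarity of the right regular representation, and the Cuntz relation $\sum_{y \in Y} V_yV_y^* = 1$ in $O_Y$, one gets $\rho(P_{e_0}) = 1_{e_0} \otimes 1$. Consequently $\rho(P_{e_0}C^*(\Gamma)P_{e_0}) = (1_{e_0}\otimes 1)\,\rho(C^*(\Gamma))\,(1_{e_0}\otimes 1)$, and by the compression formula computed above this hereditary subalgebra is precisely the closed linear span of the elements $1_{e_0} \otimes V_wV_u^*$ with $\overline w = \overline u$, that is, $1_{e_0}\otimes O_Y(G)$. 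Composing $\rho$ with the obvious $*$-isomorphism of $1_{e_0}\otimes O_Y(G)$ onto $O_Y(G)$ then yields the asserted $\pi$.

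For the intertwining relation I would introduce the one-parameter group $\beta_t = \id_{B(l^2(G))} \otimes \alpha^F_t$ on $B(l^2(G)) \otimes O_Y$ and check on the generators that
\[
\beta_t\bigl(\rho(V(g,s))\bigr) = 1_{gs^{-1}}R_s \otimes \alpha^F_t(V_s) = e^{iF(s)t}\,\rho(V(g,s)) = \rho\bigl(\gamma^F_t(V(g,s))\bigr),
\]
whence $\beta_t \circ \rho = \rho \circ \gamma^F_t$ on all of $C^*(\Gamma)$, since the $V(g,s)$ generate. Restricting to $P_{e_0}C^*(\Gamma)P_{e_0}$, which $\gamma^F$ leaves globally invariant, and using that $\alpha^F$ leaves $O_Y(G)$ globally invariant so that $\beta_t$ restricts on $1_{e_0}\otimes O_Y(G)$ to $1_{e_0}\otimes\bigl(\alpha^F_t|_{O_Y(G)}\bigr)$, one transports this identity through the isomorphism $1_{e_0}\otimes O_Y(G)\cong O_Y(G)$ to conclude $\pi\circ\gamma^F_t = \alpha^F_t\circ\pi$ for all $t \in \mathbb R$.

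Since the proposition is essentially a summary, no step is a serious obstacle. The two points that warrant a little care are: fixing the conventions for $1_g$ and $R_s$ under which the $V(g,s)$ genuinely satisfy \eqref{CKrel2} and under which $\rho(V(y,y)) = 1_{e_0}R_y\otimes V_y$; and confirming that compressing by $1_{e_0}\otimes 1$ the dense $*$-subalgebra spanned by the elements $1_hR_{\overline w\,\overline u^{-1}}\otimes V_wV_u^*$ produces a \emph{dense} $*$-subalgebra of $1_{e_0}\otimes O_Y(G)$ — which is clear because compression by a projection is linear and norm continuous and carries that spanning set onto the linear span of the $1_{e_0}\otimes V_wV_u^*$ with $\overline w=\overline u$.
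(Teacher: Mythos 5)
Your proposal is correct and follows essentially the same route as the paper, whose "proof" of Proposition \ref{0103} is precisely the preceding construction: the representation $V(g,s) = 1_{gs^{-1}}R_s \otimes V_s$ satisfying \eqref{CKrel2}, injectivity via simplicity, the compression of the spanning elements \eqref{form} by $1_{e_0}\otimes 1$ identifying the corner with $O_Y(G)$, and the intertwining checked on generators. Your only additions — making the computation $\rho(P_{e_0}) = 1_{e_0}\otimes 1$ and the equivariance argument via $\id \otimes \alpha^F_t$ explicit — are correct fillings-in of steps the paper leaves implicit.
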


\section{KMS measures and harmonic vectors} 

\subsection{KMS states and KMS measures}

Let $\beta \in \mathbb R$. A state $\omega$ on $O_Y(G)$ is a \emph{$\beta$-KMS state} for $\alpha^F$ when there is a dense $\alpha^F$-invariant $*$-subalgebra $\mathcal A$ of $O_Y(G)$ consisting of analytic elements for $\alpha^F$ such that
\begin{equation}\label{KMS}
\omega(ab) = \omega\left(b\alpha^F_{i\beta}(a)\right)
\end{equation}
for all $a,b \in \mathcal A$, cf. \cite{BR}. By Proposition 5.3.7 in \cite{BR} this condition is independent of $\mathcal A$, and since the elements $V_{t}V_{u}^*$ with $\overline{t} = \overline{u}$ span a $*$-subalgebra of $O_Y(G)$ consisting of analytic elements for $\alpha^F$ it follows that $\omega$ is a $\beta$-KMS state if and only if
\begin{equation}\label{KMS2}
\omega\left(V_{t_1}V_{u_1}^* V_{t_2} V_{u_2}^*\right) = e^{\beta\left(F(u_1) - F(t_1)\right)}\omega\left(V_{t_2}V_{u_2}^* V_{t_1} V_{u_1}^*\right)
\end{equation}
when $t_1,t_2,u_1,u_2 \in \bigcup_{n=0}^{\infty} Y^n$ and $\overline{t_i} = \overline{u_i}, \ i = 1,2$.

 It is well-known that the elements $V_tV_t^*, t \in \bigcup_{n=0}^{\infty} Y^n,$ generate a copy of $C\left(Y^{\mathbb N}\right)$ inside $O_Y(G)$ and that there is a conditional expectation $E : O_Y \to C\left(Y^{\mathbb N}\right)$ with the property that 
\begin{equation}\label{Pdense}
E\left(V_tV_u^*\right)  = \begin{cases} V_tV_t^* & \ \text{when} \ t = u \\ 0 & \ \text{otherwise.} \end{cases} 
\end{equation}

\begin{lemma}\label{prop1} Let $\beta \in \mathbb R$ and let $\omega$ be a $\beta$-KMS state for $\alpha^F$ on $O_Y(G)$. There is a unique Borel probability measure $m$ on $Y^{\mathbb N}$ such that
\begin{equation}\label{formula}
\omega(a) = \int_{Y^{\mathbb N}} E(a) \ \mathrm{d} m 
\end{equation}
for all $a \in O_Y(G)$.
\end{lemma}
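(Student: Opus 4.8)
The plan is to show that the $\beta$-KMS state $\omega$ factors through the conditional expectation, $\omega=\omega\circ E$ on $O_Y(G)$, and then to read off $m$ from the restriction of $\omega$ to the diagonal subalgebra.

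Producing $m$ is the routine part. The projections $V_tV_t^*$, $t\in\bigcup_{n=0}^{\infty}Y^{n}$, generate a unital abelian $C^*$-subalgebra $C(Y^{\mathbb N})\subseteq O_Y(G)$; moreover $E$ restricts to the identity on it and maps all of $O_Y(G)$ into it. The restriction of $\omega$ to $C(Y^{\mathbb N})$ is a state, so by the Riesz representation theorem there is a unique Borel probability measure $m$ on $Y^{\mathbb N}$ with $\omega(f)=\int_{Y^{\mathbb N}}f\,\mathrm dm$ for $f\in C(Y^{\mathbb N})$, where $V_tV_t^*$ corresponds to the indicator of the cylinder $\{x\in Y^{\mathbb N}:x_j=t_j,\ 1\le j\le|t|\}$. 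Since $E(a)\in C(Y^{\mathbb N})$ for every $a\in O_Y(G)$, this already gives $\int_{Y^{\mathbb N}}E(a)\,\mathrm dm=\omega(E(a))$, and it gives uniqueness: any measure satisfying \eqref{formula} must agree with $\omega$ on $C(Y^{\mathbb N})$, hence equal $m$. Thus everything reduces to the identity $\omega=\omega\circ E$, and, by linearity and boundedness of $\omega$ and $E$, to showing $\omega(V_wV_u^*)=0$ whenever $\overline w=\overline u$ and $w\neq u$.

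This vanishing is the heart of the matter and comes out of the KMS identity \eqref{KMS2}. I would split into two cases according to whether $w$ and $u$ are comparable as words, i.e.\ whether one is an initial segment of the other. If they are not comparable — in particular when $|w|=|u|$ and $w\neq u$ — then $V_u^*V_w=0$; feeding $(t_1,u_1,t_2,u_2)=(w,u,u,u)$ into \eqref{KMS2} and using $V_u^*V_u=1$ on the left gives $\omega(V_wV_u^*)=e^{\beta(F(u)-F(w))}\,\omega\!\left(V_u(V_u^*V_w)V_u^*\right)=0$. If $w$ and $u$ are comparable, then after passing to adjoints we may assume $u=wu'$ with $u'\in\bigcup_{k\ge1}Y^{k}$; from $\overline w=\overline u$ we get $\overline{u'}=e_0$, so $V_{u'}\in O_Y(G)$, $V_u=V_wV_{u'}$ and $V_u^*V_w=V_{u'}^*$. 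Now the choice $(t_1,u_1,t_2,u_2)=(w,u,w,w)$ in \eqref{KMS2}, simplified with $V_w^*V_w=1$ and $V_u^*V_w=V_{u'}^*$, collapses to
\[
\omega\!\left(V_wV_u^*\right)=e^{\beta(F(u)-F(w))}\,\omega\!\left(V_wV_u^*\right)=e^{\beta F(u')}\,\omega\!\left(V_wV_u^*\right),
\]
so $\omega(V_wV_u^*)=0$ as soon as $F(u')\neq0$.

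The one point where more than formal manipulation is needed — and, I expect, \emph{the main obstacle} — is exactly the degenerate case left open above: a nonempty word $u'$ with $\overline{u'}=e_0$ and $F(u')=0$, i.e.\ a loop at the neutral element along which the potential sums to zero. For the potentials relevant here, arising from strictly positive functions on $Y$, one has $F(u')=\sum_j F(u'_j)>0$ for every nonempty $u'$, so this possibility never occurs and the preceding argument is complete; some hypothesis of this kind is in fact indispensable, since for $F\equiv0$ a $\beta$-KMS state is merely a tracial state and such a state need not factor through $E$. Granting the vanishing, $\omega$ and $\omega\circ E$ agree on each $V_wV_u^*$ with $\overline w=\overline u$, hence on their closed linear span $O_Y(G)$, whence $\omega(a)=\omega(E(a))=\int_{Y^{\mathbb N}}E(a)\,\mathrm dm$ for all $a\in O_Y(G)$, with $m$ the unique such measure.
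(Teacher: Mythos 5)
Your reduction to the vanishing $\omega(V_wV_u^*)=0$ for $\overline w=\overline u$, $w\neq u$, and the two formal cases (non-comparable words, and $u=wu'$ with $\overline{u'}=e_0$) match the paper's argument, and the Riesz part is fine. But the point you yourself flag as the main obstacle --- a nonempty word $u'$ with $\overline{u'}=e_0$ and $e^{\beta F(u')}=1$ --- is exactly where your proof has a genuine gap. The lemma is stated for arbitrary $F:Y\to\mathbb R$ and arbitrary $\beta\in\mathbb R$; the standing positivity assumption $F(s)>0$ only enters the paper later (in the analysis of $Q(\beta)$), so you cannot import it here. Moreover, even granting $F>0$, your argument breaks for $\beta\le 0$ (e.g.\ $\beta=0$, where $e^{\beta F(u')}=1$ although $F(u')>0$), yet the statement covers these $\beta$ as well.

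The missing idea, which is the short first paragraph of the paper's proof, is that the KMS condition itself rules out the degenerate case: for any loop $l$ at $e_0$ one chooses a second loop $l'$ with a different first letter (possible because $\#Y\ge 2$ and $Y$ generates $G$ as a semigroup), notes that $V_lV_l^*+V_{l'}V_{l'}^*\le 1$, and computes from \eqref{KMS2} that $\omega(V_lV_l^*)=e^{-\beta F(l)}$ and $\omega(V_{l'}V_{l'}^*)=e^{-\beta F(l')}>0$; hence $e^{-\beta F(l)}<1$, so $e^{\beta F(l)}\neq 1$ for \emph{every} loop $l$ at $e_0$, for any $F$ and any $\beta$ for which a $\beta$-KMS state exists. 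With this observation your comparable case closes without any hypothesis on $F$. Your claim that a hypothesis like $F>0$ is ``indispensable'' is also incorrect in this setting: for $F\equiv 0$ (or $\beta=0$) there are no $\beta$-KMS states on $O_Y(G)$ at all, since each $V_l$ with $\overline l=e_0$ lies in $O_Y(G)$ and makes $V_lV_l^*$ equivalent to the unit, so two orthogonal such projections would each have trace $1$; thus the lemma holds vacuously there rather than failing.
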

\begin{proof} The conclusion can be obtained by combining Proposition 5.6 in \cite{CT} with Theorem 2.2 in \cite{Th1}, but in the present setting there is a much shorter proof: Let $l \in\bigcup_{n=1}^{\infty} Y^n$ be an element such that $\overline{l} = e_0$. Since $Y$ contains more than one element by assumption, there is an element $l' \in \bigcup_{n=1}^{\infty} Y^n$ with $\overline{l'} = e_0$ such that $l$ and $l'$ have different first entries. Then $V_lV_l^* + V_{l'}V_{l'}^* \leq 1$ and hence $\omega\left(V_lV_l^*\right) + \omega\left( V_{l'}V_{l'}^*\right) \leq 1$. The KMS condition \eqref{KMS2} shows that
$$
\omega\left( V_{l'}V_{l'}^*\right) = \omega\left( V_{l'}^*\alpha^F_{i\beta}\left(V_{l'}\right)\right) = e^{-\beta F(l')}\omega\left( V_{l'}^*V_{l'}\right) = e^{-\beta F(l')} > 0,
$$
implying that $e^{-\beta F(l)} = \omega\left(V_lV_l^*\right) < 1$.   
Consider then general elements $t,u \in \bigcup_{n=0}^{\infty} Y^n$ with $\overline{t} = \overline{u}$ such that $t \neq u$. It follows from the KMS-condition \eqref{KMS2} that
\begin{equation*}\label{25-04a}
\omega(V_{t}V_{u}^{*})= \omega(V_{t}V_{t}^{*}V_{t}V_{u}^{*}) =\omega(V_{t}V_{u}^{*}V_tV_t^*) .
\end{equation*}
In particular, $\omega(V_{t}V_{u}^{*}) =0$ if $V_u^*V_t = 0$. If $V_u^*V_t$ is not zero, there is an element $l \in\bigcup_{n=1}^{\infty} Y^n$ with $\overline{l} = e_0$ such that $u = tl$ or $t = ul$. Then
$$
\omega(V_{t}V_{u}^{*}) = \omega\left(\alpha^F_{i\beta} \left(V_{t}V_{u}^{*} \right)\right) = e^{\pm \beta F(l)} \omega(V_{t}V_{u}^{*}),
$$
where the sign depends on which of the two cases we are in. From the above we know that $e^{\beta F(l)} \neq 1$ and conclude therefore that $\omega(V_{t}V_{u}^{*}) = 0$ when $t \neq u$. This shows that $\omega = \omega \circ E$ and the statement in the lemma follows then from the Riesz representation theorem.
\end{proof}


\begin{defn}\label{def!} A Borel probability measure $m$ on $Y^{\mathbb N}$ is a \emph{$\beta$-KMS measure} for $\alpha^F$ when the state $\omega$ defined by (\ref{formula}) is a $\beta$-KMS state for $\alpha^F$.
\end{defn}

When $t = (t_1,t_2, \cdots, t_n) \in Y^n$ we denote in the following by $tY^{\mathbb N}$ the cylinder set
$$
tY^{\mathbb N} = \left\{(y_i)_{i=1}^{\infty} \in Y^{\mathbb N}: \ y_i = t_i, \ i = 1,2,\cdots, n\right\} ,
$$
and we set $\emptyset Y^{\mathbb N} = Y^{\mathbb N}$.

\begin{lemma}\label{betaKMS} A Borel probability measure $m$ on $Y^{\mathbb N}$ is a $\beta$-KMS measure if and only if
\begin{equation}\label{formula2}
e^{\beta F(t)}m\left(tY^{\mathbb N} \right) = e^{\beta F(u)}m\left(uY^{\mathbb N} \right) 
\end{equation}
whenever $t,u \in \bigcup_{n=0}^{\infty} Y^n$ satisfy that $\overline{t} = \overline{u}$.
\end{lemma}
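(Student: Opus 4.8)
The plan is to prove the two implications separately, the forward one being immediate from what precedes. \emph{``Only if''.} If $m$ is a $\beta$-KMS measure, then the state $\omega$ of \eqref{formula} is $\beta$-KMS, so $\omega=\omega\circ E$ by \reflemma{prop1}; since $V_tV_t^*$ corresponds to the indicator function of the cylinder $tY^{\mathbb N}$ under the inclusion $C(Y^{\mathbb N})\subseteq O_Y(G)$, this gives $\omega(V_tV_t^*)=m(tY^{\mathbb N})$ for every $t\in\bigcup_{n=0}^{\infty}Y^n$. Given $t,u$ with $\overline t=\overline u$, I would insert $t_1=t$, $u_1=u$, $t_2=u$, $u_2=t$ into the KMS identity \eqref{KMS2} (legitimate since $\overline t=\overline u$); using $V_u^*V_u=V_t^*V_t=1$ both sides collapse and \eqref{KMS2} becomes precisely $m(tY^{\mathbb N})=e^{\beta(F(u)-F(t))}m(uY^{\mathbb N})$, i.e.\ \eqref{formula2}.

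\emph{``If''.} Conversely, assume \eqref{formula2}. The functional $\omega(a)=\int_{Y^{\mathbb N}}E(a)\,\mathrm{d}m$ is a state, since $E$ is a positive unital conditional expectation and $m$ a probability measure, and because \eqref{KMS2} characterises $\beta$-KMS states it suffices to verify \eqref{KMS2} for all $t_1,u_1,t_2,u_2\in\bigcup_{n=0}^{\infty}Y^n$ with $\overline{t_i}=\overline{u_i}$. The key point is that, after applying the Cuntz relations, each of the two products in \eqref{KMS2} collapses to a single monomial $V_aV_b^*$ (or to $0$), and $\omega(V_aV_b^*)$ equals $m(aY^{\mathbb N})$ if $a=b$ and $0$ otherwise by \eqref{Pdense}; thus the identity is reduced to relations among cylinder measures that should follow from \eqref{formula2}. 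The reduction hinges on $V_{u_1}^*V_{t_2}$, which vanishes unless one of the words $u_1,t_2$ is an initial segment of the other, and otherwise equals $V_w$ when $t_2=u_1w$ or $V_w^*$ when $u_1=t_2w$.

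Carrying this out leaves three cases. When $t_2=u_1w$ the left product is $V_{t_1w}V_{u_2}^*$; if moreover $t_1w=u_2$ the right product equals $V_{t_2}V_{t_2}^*$ and \eqref{KMS2} reads $m(u_2Y^{\mathbb N})=e^{\beta(F(u_1)-F(t_1))}m(t_2Y^{\mathbb N})$, which follows from \eqref{formula2} applied to the pair $t_2,u_2$ together with $F(t_2)=F(u_1)+F(w)$ and $F(u_2)=F(t_1)+F(w)$; if $t_1w\neq u_2$ a short prefix argument shows the right-hand monomial is off-diagonal too, so both sides vanish. The case $u_1=t_2w$ is handled symmetrically, now invoking \eqref{formula2} for the pair $t_1,u_1$, and when $V_{u_1}^*V_{t_2}=0$ the same incompatibility of initial segments forces $E$ to annihilate the right-hand product as well. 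Assembling the cases gives \eqref{KMS2}, hence $m$ is a $\beta$-KMS measure. I expect the only real work to be this bookkeeping in the ``if'' direction: enumerating the initial-segment relations among the four words, noting that the intermediate monomials still lie in $O_Y(G)$ so that \eqref{formula2} may be applied, and matching the $F$-exponents via the additivity $F(w'w'')=F(w')+F(w'')$. None of this is deep, but it must be done with care.
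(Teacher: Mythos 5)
Your proposal is correct and follows essentially the same route as the paper: the paper verifies \eqref{KMS2} by computing $E$ of both products of monomials via the Cuntz relations and a prefix case analysis, and its claim that the two expressions agree for all quadruples iff \eqref{formula2} holds contains exactly your specialization $t_1=t,\ u_1=u,\ t_2=u,\ u_2=t$ for the ``only if'' direction. The only cosmetic difference is your (harmless, but redundant) appeal to Lemma~\ref{prop1}, since $\omega=\omega\circ E$ is automatic from the definition \eqref{formula}.
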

\begin{proof} Let $\omega$ be the state of $O_Y(G)$ defined by (\ref{formula}). Let $t_1,u_1,t_2,u_2 \in \bigcup_n Y^n$ such that $\overline{t_i} = \overline{u_i}, i = 1,2$. Using the relations satisfied by the isometries $V_s$ and (\ref{Pdense}) we find 
$$
\omega\left(V_{t_1}V_{u_1}^* V_{t_2} V_{u_2}^*\right) = \begin{cases} \omega \left(V_{u_2}V_{u_2}^*\right) &  \text{if} \ t_2 =u_1x \ \text{and} \ u_2 = t_1x \ \text{for some} \ x \in \bigcup_{n=0}^{\infty} Y^n, \\ 
\omega\left(V_{t_1} V_{t_1}^*\right) & \ \text{if}  \ u_1 = t_2x  \ \text{and} \  t_1 = u_2x  \ \text{for some} \ x \in \bigcup_{n=0}^{\infty} Y^n, \\ 0 & \ \text{in all other cases} , \end{cases}
$$
while
\begin{equation*}
\begin{split}
&\omega\left(V_{t_2}V_{u_2}^* \alpha^F_{i\beta}\left(V_{t_1} V_{u_1}^*\right)\right) = e^{\beta\left(F(u_1) - F(t_1)\right)}\omega\left(V_{t_2}V_{u_2}^* V_{t_1} V_{u_1}^*\right)\\
& \\
& = \begin{cases} e^{\beta\left(F(u_1) - F(t_1)\right)} \omega \left(V_{u_1}V_{u_1}^*\right) &  \text{if} \  t_1  = u_2 x   \  \text{and} \ u_1 = t_2 x  \ \text{for some} \ x \in \bigcup_{n=0}^{\infty} Y^n, \\ 
e^{\beta\left(F(u_1) - F(t_1)\right)}\omega\left(V_{t_2} V_{t_2}^*\right) & \ \text{if}  \ u_2 = t_1 x \ \text{and} \ t_2 = u_1x,   \ \text{for some} \ x \in \bigcup_{n=0}^{\infty} Y^n, \\ 0 & \ \text{in all other cases} . \end{cases}
\end{split}
\end{equation*}
The two expressions agree for all choices of $t_1,u_1,t_2$ and $u_2$ if and only if (\ref{formula2}) holds for all $t,u \in \bigcup_{n=0}^{\infty} Y^n$ with $\overline{t} = \overline{u}$. Since the elements $V_{t}V_{u}^*$ are analytic elements for $\alpha^F$ it follows that $\omega$ is a $\beta$-KMS state for $\alpha^F$ if and only if (\ref{formula2}) holds for all $t,u \in \bigcup_{n=0}^{\infty} Y^n$ with $\overline{t} = \overline{u}$.
\end{proof}

\begin{cor}\label{KMS} The formula (\ref{formula}) gives an affine homeomorphism between the $\beta$-KMS measures on $Y^{\mathbb N}$ and the $\beta$-KMS states for $\alpha^F$.
\end{cor}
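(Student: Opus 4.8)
The plan is to verify that the map $m \mapsto \omega$ given by \eqref{formula} is a well-defined affine bijection from the $\beta$-KMS measures onto the $\beta$-KMS states, and that it is a homeomorphism when both sides carry the weak${}^*$ topology. Most of the work has already been done: by Definition~\ref{def!} every $\beta$-KMS measure $m$ produces, via \eqref{formula}, a state that is by construction a $\beta$-KMS state, so the map lands where it should; by Lemma~\ref{prop1} every $\beta$-KMS state $\omega$ is of the form \eqref{formula} for a \emph{unique} Borel probability measure $m$ on $Y^{\mathbb{N}}$, and since $\omega$ is then a $\beta$-KMS state this $m$ is a $\beta$-KMS measure by Definition~\ref{def!}. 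Thus the two assignments are mutually inverse, so the map is a bijection. It remains to check affinity and continuity in both directions.

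For affinity, observe that $m \mapsto \int_{Y^{\mathbb N}} E(a)\,\mathrm{d}m$ is affine in $m$ for each fixed $a \in O_Y(G)$, since integration against a measure is linear in the measure; hence a convex combination $t m_1 + (1-t) m_2$ of $\beta$-KMS measures is sent to $t\omega_1 + (1-t)\omega_2$, and this is a $\beta$-KMS state because the set of $\beta$-KMS states is convex (the KMS condition \eqref{KMS2} is preserved under convex combinations). Uniqueness in Lemma~\ref{prop1} then forces the inverse map to be affine as well.

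For continuity, suppose $m_\lambda \to m$ weak${}^*$ in the space of Borel probability measures on the compact metrizable space $Y^{\mathbb N}$. For each fixed $a \in O_Y(G)$ the function $E(a)$ lies in $C(Y^{\mathbb N})$, so $\omega_{m_\lambda}(a) = \int E(a)\,\mathrm{d}m_\lambda \to \int E(a)\,\mathrm{d}m = \omega_m(a)$; since this holds for every $a$, $\omega_{m_\lambda} \to \omega_m$ in the weak${}^*$ topology on states, so the map is continuous. Conversely, suppose $\omega_{m_\lambda} \to \omega_m$; applying this to the elements $a = V_tV_t^*$, which by \eqref{Pdense} satisfy $E(V_tV_t^*) = V_tV_t^*$ and are mapped by \eqref{formula} to $m(tY^{\mathbb N})$, we get $m_\lambda(tY^{\mathbb N}) \to m(tY^{\mathbb N})$ for every cylinder set, which characterizes weak${}^*$ convergence on $Y^{\mathbb N}$ since the cylinder sets form a base of clopen sets; hence the inverse map is continuous too. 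The only mild subtlety — and the point most worth stating carefully — is the passage between the two topologies via the cylinder indicator functions, i.e.\ that testing against $\{V_tV_t^*\}$ suffices to recover weak${}^*$ convergence of measures; this is where the explicit formula $E(V_tV_t^*) = V_tV_t^*$ from \eqref{Pdense} does the work. Since $Y^{\mathbb N}$ is compact metrizable, both spaces are also compact, so a continuous bijection between them is automatically a homeomorphism, which gives an alternative route to the last point.
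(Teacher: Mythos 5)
Your proposal is correct and follows essentially the same route the paper intends: the corollary is stated without proof because the bijection is immediate from Lemma~\ref{prop1} together with Definition~\ref{def!}, which is exactly your first paragraph. Your verification of affinity and of bicontinuity via the cylinder functions $V_tV_t^*$ (or the compactness shortcut) just makes explicit what the paper leaves implicit.
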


In terms of the canonical generators from Proposition \ref{0103} the $\beta$-KMS state $\omega$ corresponding to a $\beta$-KMS measure $m$ is given by the formula
$$
\omega\left(V_tV_u^*\right) = \begin{cases} m\left(tY^{\mathbb N}\right) & \ \text{when} \ t = u \\ 0 & \ \text{otherwise.} \end{cases} 
$$

\subsection{KMS measures and harmonic vectors}

A vector (or function) $\psi : G \to [0,\infty)$ will be called \emph{$\beta$-harmonic} when
\begin{equation}\label{harmeq}
\sum_{s \in Y} e^{-\beta F(s)}\psi_{gs} = \psi_g
\end{equation}
for all $g \in G$, and \emph{normalized} when $\psi_{e_0} = 1$.

\begin{lemma}\label{normharm} Let $\psi$ be a normalized $\beta$-harmonic vector. There is a unique $\beta$-KMS measure $m$ on $Y^{\mathbb N}$ such that
\begin{equation}\label{nhform}
m\left(tY^{\mathbb N}\right) = e^{-\beta F(t)} \psi_{\overline{t}}
\end{equation}
for all $t \in \bigcup_{n=0}^{\infty} Y^n$.
\end{lemma}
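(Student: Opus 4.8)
The plan is to build $m$ directly from the prescribed values on cylinder sets and then to read off the $\beta$-KMS property from \reflemma{betaKMS}. Define $\mu\left(tY^{\mathbb N}\right) = e^{-\beta F(t)}\psi_{\overline t}$ for $t \in \bigcup_{n=0}^\infty Y^n$; this is a non-negative number since $\psi$ takes values in $[0,\infty)$, and $\mu\left(Y^{\mathbb N}\right) = \mu\left(\emptyset Y^{\mathbb N}\right) = \psi_{e_0} = 1$ because $F(\emptyset) = 0$, $\overline\emptyset = e_0$ and $\psi$ is normalized. The one essential computation is that $\mu$ is additive when a coordinate is refined: since $tY^{\mathbb N}$ is the disjoint union of the cylinders $tsY^{\mathbb N}$, $s \in Y$, and since $F(ts) = F(t) + F(s)$ and $\overline{ts} = \overline t\, s$, the defining relation \eqref{harmeq} for $\psi$ at the point $g = \overline t$ gives
\[
\sum_{s \in Y}\mu\left(tsY^{\mathbb N}\right) \;=\; e^{-\beta F(t)}\sum_{s \in Y} e^{-\beta F(s)}\psi_{\overline t\, s} \;=\; e^{-\beta F(t)}\psi_{\overline t} \;=\; \mu\left(tY^{\mathbb N}\right).
\]
Iterating this identity yields $\mu\left(tY^{\mathbb N}\right) = \sum_{v \in Y^k}\mu\left(tvY^{\mathbb N}\right)$ for every $k \geq 0$.

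Next I would extend $\mu$ to a Borel probability measure. The cylinder sets, together with $\emptyset$, are stable under intersection and generate the Borel $\sigma$-algebra of $Y^{\mathbb N}$, and the algebra of clopen subsets of $Y^{\mathbb N}$ consists precisely of the finite disjoint unions of cylinders. Using the refinement identity above one checks in the standard way that $\mu$ is well defined and finitely additive on this algebra: any finitely many clopen sets can be written over a common coordinate block $Y^k$, where additivity is immediate from $\mu\left(tY^{\mathbb N}\right) = \sum_{v \in Y^k}\mu\left(tvY^{\mathbb N}\right)$. Because $Y^{\mathbb N}$ is compact and cylinders are clopen, every countable cover of a cylinder by cylinders has a finite subcover, so $\mu$ is automatically a premeasure, and the Carath\'eodory extension theorem produces a Borel probability measure $m$ on $Y^{\mathbb N}$ with $m\left(tY^{\mathbb N}\right) = e^{-\beta F(t)}\psi_{\overline t}$, which is \eqref{nhform}. (Alternatively one may define a positive normalized linear functional on the locally constant functions, which are dense in $C\left(Y^{\mathbb N}\right)$, and invoke the Riesz representation theorem.)

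Finally, $m$ is a $\beta$-KMS measure: by \reflemma{betaKMS} it suffices that $e^{\beta F(t)}m\left(tY^{\mathbb N}\right) = e^{\beta F(u)}m\left(uY^{\mathbb N}\right)$ whenever $\overline t = \overline u$, and here both sides equal $\psi_{\overline t} = \psi_{\overline u}$ by \eqref{nhform}. For uniqueness, the cylinder sets form an intersection-stable family generating the Borel $\sigma$-algebra, so any Borel probability measure satisfying \eqref{nhform} agrees with $m$ on a generating $\pi$-system and hence equals $m$. I expect the only step that is not a one-line verification to be the passage from the consistent family of cylinder values to an honest countably additive Borel measure; this is where I would be most careful, though on the compact space $Y^{\mathbb N}$ it reduces to the classical extension argument.
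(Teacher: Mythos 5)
Your proposal is correct, and the only difference from the paper is in how the measure is manufactured. The paper's proof is a two-line citation: it forms the Doob-transformed stochastic matrix $p(g,h)=\psi_g^{-1}e^{-\beta F(g^{-1}h)}\psi_h$ for $g^{-1}h\in Y$ (stochasticity of $p$ is exactly the harmonicity relation \eqref{harmeq}) and invokes Theorem 1.12 of \cite{Wo}, i.e.\ the standard Markov-chain path-space construction with initial distribution $\delta_{e_0}$, the resulting path measure being carried over to $Y^{\mathbb N}$ since a trajectory started at $e_0$ determines and is determined by its sequence of increments in $Y$; uniqueness and the KMS property are then noted exactly as you do, via Lemma~\ref{betaKMS}. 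You instead build the measure by hand on $Y^{\mathbb N}$: the refinement identity $\mu\left(tY^{\mathbb N}\right)=\sum_{s\in Y}\mu\left(tsY^{\mathbb N}\right)$, which is again just \eqref{harmeq} at $g=\overline t$, gives a finitely additive set function on the clopen algebra, and compactness of $Y^{\mathbb N}$ (or the Riesz representation theorem applied to the induced functional on locally constant functions) upgrades it to a countably additive Borel probability measure; uniqueness follows from the $\pi$-system argument, which is the same observation the paper makes implicitly. So the two arguments rest on the identical consistency computation; the paper's version is shorter and fits the random-walk framework exploited elsewhere in the text, while yours is self-contained and avoids the (mild) bookkeeping of transferring the path measure on $G$-trajectories to the shift space $Y^{\mathbb N}$. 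The step you flag as delicate --- passing from the consistent cylinder values to a countably additive measure --- is handled correctly by your compactness remark, so there is no gap.
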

\begin{proof} It is standard to construct from $\psi$ a Borel probability measure on $Y^{\mathbb N}$ such that (\ref{nhform}) holds. For example one can apply Theorem 1.12 in \cite{Wo} with the stochastic matrix $p$ over $G$ defined such that
$$
p(g,h) = \psi_{g}^{-1} e^{-\beta F(g^{-1}h)}\psi_h
$$
when $g^{-1}h \in Y$ and $p(g,h) = 0$ otherwise, and with the initial distribution given by the Dirac measure supported on $e_0$. The resulting measure is clearly unique and it is a $\beta$-KMS measure by Lemma \ref{betaKMS}.
\end{proof}

As a converse to Lemma \ref{normharm}, note that a $\beta$-KMS measure $m$ defines a vector $\psi : G \to [0,\infty)$ such that
\begin{equation}\label{corrharm}
\psi_g = e^{\beta F(t)}m\left(tY^{\mathbb N}\right)
\end{equation}
for any choice of $t \in \bigcup_{n=0}^{\infty} Y^n$ with $\overline{t} = g$, and it is straightforward to check that $\psi$ is $\beta$-harmonic. Therefore
\begin{prop}\label{bac} The formula (\ref{nhform}) establishes a bijection between the $\beta$-KMS measures on $Y^{\mathbb N}$ and the normalized $\beta$-harmonic vectors.
\end{prop}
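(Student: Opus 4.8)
The plan is to show that the two assignments---from normalized $\beta$-harmonic vectors to $\beta$-KMS measures via \eqref{nhform}, and from $\beta$-KMS measures to vectors via \eqref{corrharm}---are mutually inverse, once one checks they are well defined. Lemma \ref{normharm} already supplies one direction: given a normalized $\beta$-harmonic $\psi$, it produces a $\beta$-KMS measure $m$ satisfying \eqref{nhform}, and uniqueness of $m$ is noted there. So the remaining tasks are (i) to verify that \eqref{corrharm} genuinely defines a vector $\psi : G \to [0,\infty)$, i.e.\ that the right-hand side is independent of the choice of $t \in \bigcup_{n\geq 0} Y^n$ with $\overline{t} = g$; (ii) to check this $\psi$ is normalized and $\beta$-harmonic; and (iii) to confirm the two constructions are inverse to each other.

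For (i), suppose $\overline{t} = \overline{u} = g$. Since $m$ is a $\beta$-KMS measure, Lemma \ref{betaKMS} gives $e^{\beta F(t)} m(tY^{\mathbb N}) = e^{\beta F(u)} m(uY^{\mathbb N})$, so the value $\psi_g$ in \eqref{corrharm} is well defined; it is non-negative because $m$ is a measure. (Every $g \in G$ is $\overline{t}$ for some $t$ because $Y$ generates $G$ as a semigroup, per the standing assumption.) Normalization is immediate by taking $t = \emptyset$, so that $\psi_{e_0} = e^{\beta F(\emptyset)} m(Y^{\mathbb N}) = 1$. For the harmonic equation \eqref{harmeq}, fix $g$, choose $t$ with $\overline{t} = g$, and decompose the cylinder $tY^{\mathbb N}$ as the disjoint union $\bigsqcup_{s \in Y} tsY^{\mathbb N}$ of the cylinders over the extensions $ts$, $s \in Y$, which have $\overline{ts} = gs$. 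Additivity of $m$ gives $m(tY^{\mathbb N}) = \sum_{s \in Y} m(tsY^{\mathbb N})$; multiplying by $e^{\beta F(t)}$ and using $F(ts) = F(t) + F(s)$ together with \eqref{corrharm} applied to each $ts$ yields $\psi_g = \sum_{s \in Y} e^{-\beta F(s)} \psi_{gs}$, which is exactly \eqref{harmeq}.

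Finally, for (iii): starting from a normalized $\beta$-harmonic $\psi$, forming $m$ via Lemma \ref{normharm}, and then applying \eqref{corrharm} to recover a vector $\psi'$, we get for each $g$, choosing any $t$ with $\overline{t} = g$, that $\psi'_g = e^{\beta F(t)} m(tY^{\mathbb N}) = e^{\beta F(t)} e^{-\beta F(t)} \psi_{\overline{t}} = \psi_g$, using \eqref{nhform}; so $\psi' = \psi$. Conversely, starting from a $\beta$-KMS measure $m$, forming $\psi$ via \eqref{corrharm}, and then the measure $m'$ via \eqref{nhform}, we have $m'(tY^{\mathbb N}) = e^{-\beta F(t)} \psi_{\overline{t}} = e^{-\beta F(t)} e^{\beta F(t)} m(tY^{\mathbb N}) = m(tY^{\mathbb N})$ for all cylinders $tY^{\mathbb N}$; since the cylinder sets generate the Borel $\sigma$-algebra of $Y^{\mathbb N}$ and both are probability measures, $m' = m$. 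This establishes the bijection. The only step requiring any care is the well-definedness in (i), and that is precisely where the characterization of $\beta$-KMS measures in Lemma \ref{betaKMS} does the work; everything else is bookkeeping with cylinder sets and the additive property of $F$.
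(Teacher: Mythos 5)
Your proof is correct and follows essentially the same route as the paper, which obtains one direction from Lemma \ref{normharm} and the other from \eqref{corrharm} (well defined by Lemma \ref{betaKMS}), leaving the harmonicity and inverse checks as ``straightforward.'' You have simply written out those details (the cylinder decomposition $tY^{\mathbb N}=\bigsqcup_{s\in Y}tsY^{\mathbb N}$ and the agreement of the measures on cylinders), which is exactly the intended argument.
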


In conclusion, there are affine homeomorphisms between
\begin{enumerate}
\item[$\bullet$] the $\beta$-KMS states for $\alpha^F$,
\item[$\bullet$] the $\beta$-KMS measures on $Y^{\mathbb N}$, and
\item[$\bullet$] the normalized $\beta$-harmonic vectors on $G$, 
\end{enumerate}
given by (\ref{formula}) and (\ref{nhform}), respectively. As a consequence there are bijections between the extremal $\beta$-KMS states, the extremal $\beta$-KMS measures and the extremal normalized $\beta$-harmonic vectors.

\section{The abelian KMS states}\label{AbelKMS}

We say that a normalized $\beta$-harmonic vector $\psi$ is \emph{abelian} when 
$$
\psi_{hgk} = \psi_{hkg}
$$
for all $h,g,k \in G$. The abelian elements constitute a closed convex subset in the set of normalized $\beta$-harmonic vectors and we denote this set by $\Delta$.  A KMS state for $\alpha^F$ is \emph{abelian} when the associated normalized $\beta$-harmonic vector is abelian. Before we proceed with an investigation of abelian KMS states we point out that all KMS states are abelian when $G$ is nilpotent. This follows from the Krein-Milman theorem and the following result of Margulis, cf. \cite{Ma}, first proved for abelian groups by Doob, Snell and Williamson, \cite{DSW}.

\begin{thm}\label{margulis} (Margulis) Assume that $G$ is nilpotent. A normalized extremal $\beta$-harmonic vector $\psi : G \to [0,\infty)$ is multiplicative: $\psi_{gh} = \psi_g\psi_h$ for all $g,h \in G$.
\end{thm}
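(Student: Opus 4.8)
The plan is to extract a soft consequence of extremality — a domination principle — and then exploit the upper central series of $G$, the abelian case serving as the base of an induction on the nilpotency class.

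\emph{Soft input.} First I would record that $\psi > 0$ everywhere: if $\psi_g = 0$, then \eqref{harmeq} forces $\psi_{gs} = 0$ for every $s \in Y$, and since $Y$ generates $G$ as a semigroup, induction gives $\psi \equiv 0$, contradicting $\psi_{e_0} = 1$; the same reasoning shows that a non-negative $\beta$-harmonic vector vanishing at $e_0$ vanishes identically. Combining this with extremality yields the crucial \emph{domination lemma}: if $\phi$ is a normalized $\beta$-harmonic vector with $\phi \le C\psi$ pointwise for some $C$, then $\phi = \psi$. Indeed such a $C$ is necessarily $\ge 1$, and for $C > 1$ the identity $\psi = \tfrac1C\phi + \tfrac{C-1}{C}\cdot\tfrac1{C-1}(C\psi-\phi)$ displays $\psi$ as a proper convex combination of two normalized $\beta$-harmonic vectors, so extremality forces $\phi=\psi$; the case $C=1$ is immediate from the vanishing remark applied to $\psi-\phi$.

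\emph{Semi-invariance under the centre.} Keeping one term in \eqref{harmeq} gives $\psi_{gs} \le e^{\beta F(s)}\psi_g$, and iterating along any word $z = s_1\cdots s_m$ in $Y$ (such a word exists because $Y$ generates $G$ as a semigroup) produces the \emph{forward Harnack bound} $\psi_{gz} \le C^m \psi_g$ with $C = \max_{s\in Y} e^{\beta F(s)}$. If $z$ is central then $\psi_{gsz} = \psi_{gzs}$ for all $g,s$, so $g \mapsto \psi_{gz}$ is again $\beta$-harmonic; it is dominated by $C^m\psi$, and after dividing by $\psi_z$ it is normalized, so the domination lemma gives $\psi_{gz} = \psi_z\psi_g$ for all $g$. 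Hence $\psi|_{Z(G)}$ is a homomorphism $Z(G) \to (0,\infty)$; when $G$ is abelian (take $Z(G) = G$) this already proves multiplicativity, recovering the Doob--Snell--Williamson theorem.

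\emph{Induction on the class, and the obstacle.} For the inductive step I would pass to $Q := G/Z(G)$, nilpotent of smaller class, with $\pi(Y)$ generating $Q$ as a semigroup. If $\psi$ were trivial on $Z(G)$ it would descend to an extremal positive vector on $Q$ harmonic for the pushed-forward gauge equation (positive weights obtained by summing $e^{-\beta F(s)}$ over the fibres of $\pi$), the inductive hypothesis would make it multiplicative on $Q$, and hence $\psi$ multiplicative on $G$. In general one wants first to untwist: choose a character $\widetilde\chi\colon G \to (0,\infty)$ extending $\chi := \psi|_{Z(G)}$ and replace $\psi$ by $\psi/\widetilde\chi$, which is $Z(G)$-invariant and extremal harmonic for the modified weights $e^{-\beta F(s)}\widetilde\chi(s)$, then descend and reassemble $\psi$ as a product of two characters. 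The obstruction — and the reason nilpotency must enter essentially (the theorem fails, e.g., for free groups, where minimal $\beta$-harmonic vectors that are not multiplicative abound) — is that $\chi$ extends to a character of $G$ only when it kills $Z(G)\cap[G,G]$; this "commutator twist" is exactly the image under $\chi$ of the extension cocycle of $Z(G) \hookrightarrow G \twoheadrightarrow Q$, and it vanishes precisely when $\psi$ is multiplicative, so the naive induction cannot close on its own. The hard part will be to eliminate this twist — equivalently, to prove that an extremal $\beta$-harmonic vector is a class function; the forward Harnack bound does not help here, since conjugates $g^{-1}sg$ have word length tending to infinity in a nilpotent group. I expect this to require the Doob $h$-transform of $\psi$ — the Markov chain on $G$ whose bounded harmonic functions are constant precisely because $\psi$ is extremal — together with the subexponential growth of $G$, to force the increments of that chain to stabilize, whence the class-function property and, via the two previous steps, multiplicativity.
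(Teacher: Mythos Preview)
The paper does not supply a proof of this theorem: it is quoted as a result of Margulis \cite{Ma} (abelian case: Doob--Snell--Williamson \cite{DSW}) and used as a black box to deduce that for nilpotent $G$ every KMS state is abelian. There is therefore no argument in the paper to compare yours with.

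Evaluated on its own terms, your write-up is a correct beginning but not a proof, and you say as much. The strict positivity of $\psi$, the domination lemma, and the semi-invariance $\psi_{gz}=\psi_z\psi_g$ for $z\in Z(G)$ are all argued correctly, and together they do settle the abelian case. You then locate the obstruction to inducting on the nilpotency class --- the character $\psi|_{Z(G)}$ need not a priori be trivial on $Z(G)\cap[G,G]$, hence need not extend to a character of $G$ --- and stop, saying only that you ``expect'' the $h$-transform together with subexponential growth to eliminate the twist. That expectation points in the right direction (extremality of $\psi$ is exactly triviality of the bounded-harmonic space of the $h$-process, and polynomial growth of nilpotent groups is indeed what Margulis exploits), but turning it into an argument is the entire content of the theorem, not a detail to be filled in later. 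As written, the induction does not close and the proposal has a genuine gap at precisely the point where nilpotency must be used in earnest.
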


\begin{lemma}\label{??} An element $\psi \in \Delta$ is extremal in $\Delta$ if and only if there is there is a homomorphism $c : G \to \mathbb R$ such that 
\begin{equation}\label{c}
\psi_g = e^{c(g)} \ \ \ \forall g \in G .
\end{equation}
\end{lemma}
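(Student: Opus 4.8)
The plan is to characterize the extreme points of the compact convex set $\Delta$ by showing they are exactly the exponentials of real homomorphisms. One direction is genuinely easy: if $\psi_g = e^{c(g)}$ for a homomorphism $c : G \to \mathbb R$, then $\psi$ is clearly abelian (since $c$ kills commutators), it is normalized because $c(e_0) = 0$, and it is $\beta$-harmonic precisely when $\sum_{s \in Y} e^{-\beta F(s)} e^{c(s)} = 1$; this last equation is a scalar constraint which one should either impose as part of the hypothesis or observe is forced — I would check carefully against \eqref{harmeq} that $c$ ranging over those homomorphisms with $\sum_{s\in Y} e^{c(s) - \beta F(s)} = 1$ gives exactly the relevant family. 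For extremality, I would argue that such a $\psi$ is in fact multiplicative ($\psi_{gh} = \psi_g\psi_h$), and that a multiplicative normalized $\beta$-harmonic vector cannot be a nontrivial convex combination inside $\Delta$: if $\psi = \tfrac12(\psi^1 + \psi^2)$ with $\psi^i \in \Delta$, then evaluating the harmonicity/multiplicativity relations along powers $g^n$ and using that $\psi$ is an extreme ray of the cone of all $\beta$-harmonic vectors (Choquet-type / boundary argument) forces $\psi^1 = \psi^2 = \psi$.

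The substantive direction is the converse: an extreme point $\psi$ of $\Delta$ must be of the form $e^{c(g)}$. Here the key idea is to pass to the abelianization. Since $\psi$ is abelian, $\psi_{hgk} = \psi_{hkg}$ for all $h,g,k$, so $\psi$ is constant on cosets of the commutator subgroup $[G,G]$: indeed $\psi_{g} = \psi_{g c}$ for every commutator $c$ (take $h,k$ appropriately and iterate), hence $\psi$ factors as $\psi = \overline\psi \circ q$ where $q : G \to G^{ab} = G/[G,G]$ is the quotient map. The harmonicity equation \eqref{harmeq} descends: with $\overline Y = q(Y)$ (a finite generating set of the abelian group $G^{ab}$, possibly with multiplicities) and the pushed-forward weights, $\overline\psi$ is a normalized $\beta$-harmonic vector on $G^{ab}$ for the induced data. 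Conversely every normalized $\beta$-harmonic vector on $G^{ab}$ pulls back to an element of $\Delta$, and this correspondence $\psi \leftrightarrow \overline\psi$ is an affine bijection between $\Delta$ and the full simplex of normalized $\beta$-harmonic vectors on $G^{ab}$; in particular it preserves extreme points.

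So the problem reduces to: the extremal normalized $\beta$-harmonic vectors on the abelian group $G^{ab}$ are the multiplicative ones, i.e. of the form $\chi \mapsto$ (positive character). This is exactly Theorem \ref{margulis} (Margulis, in the abelian case due to Doob–Snell–Williamson): $G^{ab}$ is abelian, hence nilpotent, so its extremal normalized $\beta$-harmonic vectors are multiplicative maps $G^{ab} \to [0,\infty)$. A multiplicative map into $[0,\infty)$ that is normalized is nowhere zero (if $\overline\psi_{\bar g} = 0$ then $1 = \overline\psi_{e_0} = \overline\psi_{\bar g}\overline\psi_{\bar g^{-1}} = 0$), hence takes values in $(0,\infty)$, and therefore has the form $\overline\psi_{\bar g} = e^{\bar c(\bar g)}$ for a group homomorphism $\bar c : G^{ab} \to \mathbb R$ (take $\bar c = \log \overline\psi$). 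Composing with $q$ gives $c = \bar c \circ q : G \to \mathbb R$, a homomorphism with $\psi_g = e^{c(g)}$, which completes the converse.

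The main obstacle I anticipate is making the reduction to the abelianization airtight: one must verify that the pushforward of the defining equation \eqref{harmeq} under $q$ is again exactly an equation of the same type on $G^{ab}$ (watch for the fact that $q$ need not be injective on $Y$, so the pushed-forward weight at $\bar s \in \overline Y$ is $\sum_{s \in Y,\, q(s) = \bar s} e^{-\beta F(s)}$, and one should phrase Theorem \ref{margulis} for the resulting non-negative matrix over $G^{ab}$ rather than literally for a function on a generating set), and that this bijection between $\Delta$ and the $\beta$-harmonic vectors on $G^{ab}$ really is a homeomorphism for the topology in play so that extreme points match up. Everything else is bookkeeping with commutators and the elementary observation that positive multiplicative maps are exponentials of homomorphisms.
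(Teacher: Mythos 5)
Your proposal is correct in substance, but it reaches the hard direction by a genuinely different route than the paper. For ``extremal $\Rightarrow$ exponential of a homomorphism'' you factor $\psi$ through the abelianization (the observation that $\psi_{hgk}=\psi_{hkg}$ forces invariance under right multiplication by commutators is exactly right), transport \eqref{harmeq} to a harmonic equation on $G/[G,G]$ with the pushed-forward weights, and then invoke Theorem \ref{margulis} (Doob--Snell--Williamson/Margulis) for the abelian quotient; since an affine bijection of convex sets matches extreme points, and a normalized multiplicative vector is nowhere zero, you get $c=\bar c\circ q$. The paper instead avoids Margulis entirely: it writes $\psi_g=\sum_{s\in Y}e^{-\beta F(s)}\psi_s\,\psi^s_g$ with $\psi^s_g=\psi_s^{-1}\psi_{sg}$, notes that the coefficients $e^{-\beta F(s)}\psi_s$ sum to $\psi_{e_0}=1$ and that each $\psi^s$ again lies in $\Delta$, so extremality forces $\psi^s=\psi$, i.e.\ multiplicativity on $Y$ and hence on $G$. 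The paper's two-line convexity trick keeps the lemma elementary and independent of Theorem \ref{margulis}, which is used only later to identify all KMS states with abelian ones for nilpotent $G$; your route imports that external theorem and, as you yourself flag, needs it restated for the general non-negative matrix on $G/[G,G]$ (weights with multiplicities, and the degenerate case where $q(Y)$ is a single element falls outside the standing assumption $\#Y\geq 2$), but it makes the ``everything lives on the abelianization'' philosophy explicit. For the converse direction both you and the paper use the same growth-plus-Choquet mechanism; here be careful on two points: your phrase ``using that $\psi$ is an extreme ray of the cone of all $\beta$-harmonic vectors'' is circular as written and must be replaced by the explicit estimate (if $t\phi\leq\psi$, Choquet-decompose $\phi$ over $\partial\Delta$ and observe that any multiplicative $\xi$ with $\xi_k>\psi_k$ makes $(\xi_k/\psi_k)^n$ unbounded, so the representing measure charges only $\{\psi\}$), and this argument needs the fact that the extreme points of $\Delta$ are multiplicative, so in your scheme the Margulis-based direction must be established first.
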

\begin{proof} Assume first that $\psi$ is extremal in $\Delta$. Since $\psi$ is abelian and $\beta$-harmonic,
\begin{equation}\label{9-1-17}
\psi_g = \sum_{s \in Y} e^{-\beta F(s)} \psi_{sg} = \sum_{s \in Y}  e^{-\beta F(s)} \psi_s \psi^s_g,
\end{equation}
where $\psi^s : G \to [0,\infty)$ is defined by
$\psi^s_g = \psi_s^{-1} \psi_{sg}$. Note that $\sum_{s \in Y}  e^{-\beta F(s)} \psi_s = \psi_{e_0} = 1$ and that $\psi^s$ is a normalized $\beta$-harmonic vector. Furthermore, $\psi^s$ is abelian since $\psi$ is. Since $\psi$ is extremal by assumption it follows therefore from (\ref{9-1-17}) that $\psi^s = \psi$ for all $s \in Y$. That is, $\psi_{sg} = \psi_s \psi_g$ for all $s \in Y$ and all $g \in G$. Since $Y$ generates $G$ as a semigroup it follows that $\psi_{hg} = \psi_h\psi_g$ for all $h,g \in G$. Set $c(g) = \log \psi_g$. 

 Conversely assume that there is a homomorphism $c : G \to \mathbb R$ such that (\ref{c}) holds. Consider an element $\phi \in \Delta$ and a $t > 0$ such that $t\phi_g \leq \psi_g$ for all $g \in G$. We must show that $\phi = \psi$. To this end we use Choquet theory to write
$$
\phi_g = \int_{\partial \Delta} \xi_g \ \mathrm{d}\nu(\xi) \ 
$$
where $\nu$ is a Borel probability measure on the set $ \partial \Delta$ of extreme points in $\Delta$, cf. e.g. Proposition 4.1.3 and Theorem 4.1.11 in \cite{BR}. When $\xi \in \partial \Delta \backslash \{\psi\}$
there is a $k \in G$ such that $\xi_k/\psi_k > 1$,
and hence also an open neighborhood $U$ of $\xi$ in $\partial \Delta$ such that
$\xi'_k/\psi_k > 1$ for all $\xi' \in U$. From the first part of the proof we know that the elements of $\partial \Delta$ are multiplicative and the same is true for $\psi $ by assumption. Hence
$$
\lim_{n \to \infty} \xi'_{nk} \left(\psi_{nk}\right)^{-1} = \lim_{n \to \infty}
\left(\xi'_k \left(\psi_k\right)^{-1}\right)^n = \infty
$$
for all $\xi' \in U$. But $t\phi \leq \psi$ by assumption,
so we must have that
$$
\int_{U } \xi'_{nk} \left(\psi_{nk}\right)^{-1} \ \mathrm{d}\nu(\xi') \leq \left(\psi_{nk}\right)^{-1}\int_{\partial \Delta} \xi'_{nk} \ \mathrm{d}\nu(\xi') = \left(\psi_{nk}\right)^{-1} \phi_{nk}\leq t^{-1}
$$
for all $n \in \mathbb N$ and we conclude therefore that $\nu(U) =
0$. Since $\xi\in  \partial \Delta \backslash \{\psi\}$ was
arbitrary it follows first that $\nu\left( \partial \Delta \backslash
  \{\psi\}\right) = 0$, and then that $\phi = \psi$.
\end{proof} 

A Borel measure $m$ on $Y^{\mathbb N}$ is \emph{abelian} when $m\left(t_1t_2t_3Y^{\mathbb N}\right) = m\left(t_1t_3t_2Y^{\mathbb N}\right)$ for all $t_1,t_2,t_3 \in \bigcup_n Y^n$. Clearly, a $\beta$-KMS measure is abelian if and only if the corresponding $\beta$-harmonic vector is.

 A Borel probability measure $m$ on $Y^{\mathbb N}$ is \emph{Bernoulli} when there is map, sometimes called a probability vector, $p : Y \to [0,1]$ with $\sum_{y \in Y} p(y)= 1$ such that $m$ is the corresponding  infinite product measure on $Y^{\mathbb N}$, i.e.
$$
m\left(tY^{\mathbb N}\right) =  \prod_{i=1}^n p(t_i)
$$
when $t = (t_i)_{i=1}^n \in Y^n$.

\begin{lemma}\label{bernoulli} An abelian $\beta$-KMS measure is extremal in the set of abelian $\beta$-KMS measures if and only if it is a Bernoulli measure.
\end{lemma}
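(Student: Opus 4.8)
The plan is to exploit the bijections established earlier in the excerpt, which transport the convex-geometric question on abelian $\beta$-KMS measures to the corresponding question on abelian normalized $\beta$-harmonic vectors, and then to invoke Lemma~\ref{??}. First I would observe that the correspondence $m \leftrightarrow \psi$ given by \eqref{nhform} is affine and, by Proposition~\ref{bac} together with the remark identifying abelian measures with abelian harmonic vectors, restricts to an affine bijection between abelian $\beta$-KMS measures and elements of $\Delta$. Hence $m$ is extremal in the abelian $\beta$-KMS measures if and only if the associated $\psi$ is extremal in $\Delta$, and by Lemma~\ref{??} this happens exactly when $\psi_g = e^{c(g)}$ for a homomorphism $c : G \to \mathbb R$.

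Next I would translate the multiplicativity of $\psi$ into a product formula for $m$. If $\psi_g = e^{c(g)}$ for a homomorphism $c$, then for $t = (t_i)_{i=1}^n \in Y^n$ the formula \eqref{nhform} gives
\[
m\left(tY^{\mathbb N}\right) = e^{-\beta F(t)} \psi_{\overline{t}} = \prod_{i=1}^n e^{-\beta F(t_i) + c(t_i)} = \prod_{i=1}^n p(t_i),
\]
where $p : Y \to [0,\infty)$ is defined by $p(y) = e^{-\beta F(y) + c(y)}$. Using $n = 1$ and the fact that $m$ is a probability measure so that $\sum_{y \in Y} m\left(yY^{\mathbb N}\right) = m\left(Y^{\mathbb N}\right) = 1$, we see $\sum_{y \in Y} p(y) = 1$, so $p$ is a probability vector and $m$ is the Bernoulli measure it determines. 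This proves one implication.

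For the converse, suppose $m$ is a Bernoulli measure with probability vector $p$. Being Bernoulli it is certainly abelian, since $m(t_1t_2t_3Y^{\mathbb N})$ and $m(t_1t_3t_2Y^{\mathbb N})$ are both products of the same $p$-values; and by Lemma~\ref{betaKMS} one checks it is a $\beta$-KMS measure, because \eqref{formula2} for $\overline t = \overline u$ reduces to showing that $e^{\beta F(t)}\prod_i p(t_i)$ depends only on $\overline t$ — indeed, comparing with a word $l$ of length one and using that the associated $\psi$ via \eqref{corrharm} is $\beta$-harmonic forces $p(y) = e^{-\beta F(y)}\psi_y$ with $\psi$ multiplicative on the semigroup generated by $Y$, hence on $G$. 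Then the associated $\psi_g = e^{\beta F(t)} m(tY^{\mathbb N})$ is multiplicative, so by Lemma~\ref{??} it is extremal in $\Delta$, and transporting back along the affine bijection shows $m$ is extremal among abelian $\beta$-KMS measures.

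The main obstacle I anticipate is the bookkeeping in the converse direction: one must verify cleanly that an arbitrary Bernoulli measure is actually a $\beta$-KMS measure (this is not automatic — it constrains $p$ to the form $e^{-\beta F(y)+c(y)}$ for a homomorphism $c$, and a Bernoulli measure with an arbitrary probability vector need not satisfy \eqref{formula2}). So the statement is slightly delicate: the correct reading is that the Bernoulli measures that arise are precisely those whose probability vector has this special exponential form, and the proof should make explicit that the $\beta$-KMS condition itself pins this down. Apart from that, everything is a routine consequence of Lemma~\ref{??} and the affine correspondence, so the bulk of the argument is transport of structure rather than new estimates.
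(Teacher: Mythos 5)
Your argument follows the same route as the paper: transport extremality through the affine correspondence of Proposition~\ref{bac} to the set $\Delta$, apply Lemma~\ref{??}, and use \eqref{nhform} to convert multiplicativity of $\psi$ into the Bernoulli product formula and back. The forward direction is exactly the paper's proof.

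The one thing to clean up is the converse. The lemma quantifies over measures that are \emph{already} abelian $\beta$-KMS measures, so there is nothing to verify about the KMS property: the paper simply assumes ``$m$ is a $\beta$-KMS measure which happens to be Bernoulli'' and then computes, using \eqref{nhform} and the product structure,
$\psi_{\overline{t}\,\overline{u}} = e^{\beta(F(t)+F(u))} m\left(tuY^{\mathbb N}\right) = e^{\beta F(t)} m\left(tY^{\mathbb N}\right)\, e^{\beta F(u)} m\left(uY^{\mathbb N}\right) = \psi_{\overline{t}}\psi_{\overline{u}}$,
so $\psi$ is multiplicative and Lemma~\ref{??} gives extremality. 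Your version instead tries to \emph{derive} the KMS condition from Bernoulliness, and in doing so invokes ``the associated $\psi$ via \eqref{corrharm}''; but \eqref{corrharm} only yields a well-defined vector once one already knows $m$ satisfies \eqref{formula2}, so that step is circular if the KMS hypothesis is dropped. You do flag the correct reading at the end, and with that reading your detour is unnecessary rather than wrong: delete the attempted verification of \eqref{formula2}, keep the multiplicativity computation from the product structure, and the converse is exactly the paper's argument.
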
 
\begin{proof} Consider an abelian $\beta$-KMS measure $m$ and let $\psi$ be the corresponding $\beta$-harmonic vector. If $m$ is extremal in the set of abelian $\beta$-KMS measures $\psi$ is extremal in $\Delta$ and by Lemma \ref{??} there is a $c \in \Hom (G,\mathbb R)$ such that $\psi_g = e^{c(g)}$ for all $g$. The condition $\sum_{s \in Y} e^{-\beta F(s)}\psi_s = \psi_{e_0} = 1$ implies that 
$$
\sum_{s \in Y} e^{c(s) - \beta F(s)} = 1.
$$
Set $p(s) =  e^{c(s) - \beta F(s)}$.
For $t = (t_1,t_2,\cdots, t_n) \in Y^n$ we find that
$$
m\left(tY^{\mathbb N}\right) = \prod_{i=1}^n e^{-\beta F(t_i)}\psi_{t_1t_2\cdots t_n} = \prod_{i=1}^n e^{-\beta F(t_i) + c(t_i)} =\prod_{i=1}^n p(t_i) ,
$$
showing that $m$ is the Bernoulli measure on $Y^{\mathbb N}$ defined from $p$. This proves one of the implications, and to prove the reverse assume that $m$ is a $\beta$-KMS measure which happens to be Bernoulli. Let $\psi$ be the $\beta$-harmonic vector corresponding to $m$. Then $\psi$ is clearly abelian. When $t,u \in \bigcup_n Y^n$ we find from (\ref{nhform}) that
\begin{equation*}
\begin{split}
&\psi_{\overline{t}\overline{u}} = \psi_{\overline{tu}} = e^{\beta \left(F(t) + F(u)\right)}  m\left(tuY^{\mathbb N}\right) \\
&=  e^{\beta F(t)}  e^{\beta F(u)}  m\left(tY^{\mathbb N}\right) m\left(uY^{\mathbb N}\right) = \psi_{\overline{t}}\psi_{\overline{u}} .
\end{split}
\end{equation*}
Thus $g \to \psi_g$ is multiplicative and hence of the form (\ref{c}) for some $c \in \Hom(G,\mathbb R)$. It follows from Lemma \ref{??} that $\psi$ is extremal, and hence also that $m$ is.
\end{proof}

Set
$$
Q(\beta) = \left\{ c \in \Hom (G,\mathbb R): \ \sum_{s \in Y} e^{c(s)-\beta F(s)} = 1 \right\} .
$$
Equip $\mathbb R^G$ with the product topology and $Q(\beta) \subseteq \mathbb R^G$ with the relative topology. Then $Q(\beta)$ is a compact subset of $\mathbb R^G$. Given an element $c \in Q(\beta)$ we denote by $b_c$ the Bernoulli measure on $Y^{\mathbb N}$ defined from the probability vector $p(y) =  e^{c(y) - \beta F(y)}$.

\begin{lemma}\label{bernoulli2} The map $c \mapsto b_c$ is a homeomorphism from $Q(\beta)$ onto the set of extreme points of the abelian $\beta$-KMS measures equipped with the weak*-topology.
\end{lemma}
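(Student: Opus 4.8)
The plan is to establish that $c \mapsto b_c$ is a continuous bijection from $Q(\beta)$ onto the set of extreme points of the abelian $\beta$-KMS measures, and then to upgrade this to a homeomorphism using compactness of $Q(\beta)$ and the fact that the space of Borel probability measures on $Y^{\mathbb N}$ is Hausdorff in the weak*-topology.

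First I would check that, for $c \in Q(\beta)$, the measure $b_c$ is indeed an extreme point of the abelian $\beta$-KMS measures. The vector $\psi$ defined by $\psi_g = e^{c(g)}$ is normalized, and it is $\beta$-harmonic because $\sum_{s \in Y} e^{-\beta F(s)} \psi_{gs} = e^{c(g)} \sum_{s \in Y} e^{c(s) - \beta F(s)} = e^{c(g)}$; so by Lemma~\ref{normharm} there is a $\beta$-KMS measure $m$ with $m(tY^{\mathbb N}) = e^{-\beta F(t)} \psi_{\overline t} = \prod_i e^{c(t_i) - \beta F(t_i)}$, which is exactly $b_c$. Since $\psi_{hgk} = \psi_{hkg}$ trivially, $b_c$ is abelian, and being Bernoulli it is extremal among the abelian $\beta$-KMS measures by Lemma~\ref{bernoulli}. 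Conversely, by Lemma~\ref{bernoulli} any extreme point of the abelian $\beta$-KMS measures is Bernoulli, and the argument in the proof of that lemma shows that its probability vector is of the form $p(y) = e^{c(y) - \beta F(y)}$ for a homomorphism $c : G \to \mathbb R$, which then lies in $Q(\beta)$ because $\sum_{y \in Y} p(y) = 1$. Hence $c \mapsto b_c$ maps onto the set of extreme points. For injectivity, if $b_c = b_{c'}$ then their probability vectors coincide, so $c$ and $c'$ agree on $Y$, and since $Y$ generates $G$ as a semigroup and $c,c'$ are homomorphisms, $c = c'$.

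Next I would verify continuity. The product topology on $Q(\beta) \subseteq \mathbb R^G$ is the topology of pointwise convergence, and $G$ is countable, so it suffices to consider a sequence $c_n \to c$ in $Q(\beta)$. For each $y \in Y$ we then have $c_n(y) \to c(y)$, hence $b_{c_n}(tY^{\mathbb N}) = \prod_i e^{c_n(t_i) - \beta F(t_i)} \to b_c(tY^{\mathbb N})$ for every cylinder set $tY^{\mathbb N}$. Since the cylinder sets are clopen and their indicator functions span a dense subspace of $C(Y^{\mathbb N})$ --- as $Y^{\mathbb N}$ is compact and totally disconnected --- and the $b_{c_n}$ are probability measures, a standard $3\varepsilon$-argument gives $\int f \, \mathrm{d}b_{c_n} \to \int f \, \mathrm{d}b_c$ for all $f \in C(Y^{\mathbb N})$; that is, $b_{c_n} \to b_c$ in the weak*-topology. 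Thus $c \mapsto b_c$ is continuous.

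To finish, $Q(\beta)$ is compact, so the continuous bijection $c \mapsto b_c$ onto the set of extreme points of the abelian $\beta$-KMS measures, with values in a Hausdorff space, is automatically a homeomorphism. I do not expect any serious obstacle here: the content is essentially contained in Lemmas~\ref{normharm} and~\ref{bernoulli} together with elementary point-set topology, and the only step requiring a moment's thought is the passage from pointwise convergence of the homomorphisms $c_n$ to weak*-convergence of the Bernoulli measures $b_{c_n}$, which rests on the density of locally constant functions in $C(Y^{\mathbb N})$.
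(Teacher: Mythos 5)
Your proposal is correct and follows essentially the same route as the paper: $b_c$ is a $\beta$-KMS measure by Lemma~\ref{normharm}, the image is identified with the extreme points via Lemma~\ref{bernoulli} and its proof, injectivity comes from reading off $e^{c(s)}$ from the cylinder measures $b_c\left(sY^{\mathbb N}\right)$ for $s\in Y$, and compactness of $Q(\beta)$ plus the Hausdorff weak*-topology upgrades the continuous bijection to a homeomorphism. The only difference is that you spell out the continuity of $c\mapsto b_c$ (via cylinder functions and density of locally constant functions), which the paper simply asserts.
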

\begin{proof} $b_c$ is a $\beta$-KMS measure by Lemma \ref{normharm}. It follows from Lemma \ref{bernoulli} and its proof that $\left\{ b_c:  \ c \in Q(\beta)\right\}$ is the set of extreme points in the set of abelian $\beta$-KMS measures. Since the map $c \mapsto b_c$ is continuous it suffices to show that is it also injective; a fact which follows immediately from the observation that
$$
e^{c(s)} = e^{\beta F(s)} b_c\left(sY^{\mathbb N}\right) 
$$
for all $s \in Y$.
\end{proof}

\begin{thm}\label{Q-descr} There is an affine homeomorphism $\nu \mapsto \omega_{\nu}$ from the Borel probability measures $\nu$ on $Q(\beta)$ onto the abelian $\beta$-KMS states for $\alpha^F$ such that
\begin{equation}\label{bdint}
\omega_{\nu}(a) = \int_{Q(\beta)}  \int_{Y^{\mathbb N}} E(a) \  \mathrm{d} b_c \ \mathrm{d} \nu(c) 
\end{equation}
for $a \in O_Y(G)$.
\end{thm}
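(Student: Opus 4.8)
My plan is to move the statement, via the affine homeomorphisms between $\beta$-KMS states, $\beta$-KMS measures and normalized $\beta$-harmonic vectors recorded at the end of the previous section (in particular Proposition~\ref{bac}), onto the compact convex set $\Delta \subseteq \mathbb R^G$. Under these identifications the map to be analysed becomes $\nu \mapsto \psi^{\nu}$ with $\psi^{\nu}_g = \int_{Q(\beta)} e^{c(g)}\,\mathrm{d}\nu(c)$, and I would first record the routine facts that $\psi^{\nu}$ is a normalized abelian $\beta$-harmonic vector, so $\psi^{\nu}\in\Delta$, and that the state it determines is precisely the $\omega_{\nu}$ of $(\ref{bdint})$. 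The latter follows from the identity $b_c(tY^{\mathbb N}) = \prod_{i=1}^{n} e^{c(t_i) - \beta F(t_i)} = e^{-\beta F(t)} e^{c(\overline{t})}$ for $t \in Y^n$, which uses that $c$ is a homomorphism: then $m_{\nu} := \int_{Q(\beta)} b_c\,\mathrm{d}\nu(c)$ is, by $(\ref{nhform})$, the $\beta$-KMS measure attached to $\psi^{\nu}$, and $(\ref{formula})$ together with Fubini identifies the corresponding state with $\omega_{\nu}$. In particular every $\omega_{\nu}$ is an abelian $\beta$-KMS state, and the theorem is reduced to showing that $\nu \mapsto \omega_{\nu}$ is an affine homeomorphism onto the set of all abelian $\beta$-KMS states.

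Affinity is immediate from the linearity of $(\ref{bdint})$ in $\nu$. For continuity I would note that on the dense span of the $V_tV_u^*$ one has $\omega_{\nu}(V_tV_u^*) = 0$ for $t \neq u$ while $\omega_{\nu}(V_tV_t^*) = e^{-\beta F(t)}\int_{Q(\beta)} e^{c(\overline{t})}\,\mathrm{d}\nu(c)$, which is weak*-continuous in $\nu$ because $c \mapsto e^{c(\overline{t})}$ lies in $C(Q(\beta))$; since the $\omega_{\nu}$ are states this propagates to weak*-continuity on all of $O_Y(G)$. As the Borel probability measures on the compact set $Q(\beta)$ form a weak*-compact space and the state space of $O_Y(G)$ is Hausdorff, it then suffices to prove that $\nu \mapsto \omega_{\nu}$ is a bijection onto the abelian $\beta$-KMS states.

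For surjectivity I would invoke Choquet's theorem: $G$ is countable, so $\mathbb R^G$ is metrizable, and $\Delta$ is compact (being closed in $\mathbb R^G$ and, by harmonicity, bounded at each point of $G$), hence a metrizable compact convex set, so every $\psi \in \Delta$ is the barycenter of a Borel probability measure carried by the extreme boundary of $\Delta$. By Lemma~\ref{??} this boundary is $\{(e^{c(g)})_{g \in G} : c \in Q(\beta)\}$, and since $c \mapsto (e^{c(g)})_{g \in G}$ is a continuous injection of the compact set $Q(\beta)$ into $\mathbb R^G$ it is a homeomorphism onto the boundary; pulling the representing measure back along it gives a $\nu$ with $\psi = \psi^{\nu}$, hence $\omega = \omega_{\nu}$. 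For injectivity, suppose $\omega_{\nu_1} = \omega_{\nu_2}$; evaluating on the projections $V_tV_t^*$ and cancelling $e^{-\beta F(t)}$ shows that $\int_{Q(\beta)} e^{c(g)}\,\mathrm{d}\nu_1(c) = \int_{Q(\beta)} e^{c(g)}\,\mathrm{d}\nu_2(c)$ for every $g \in G$ (each $g$ being of the form $\overline{t}$). The linear span of the functions $c \mapsto e^{c(g)}$, $g \in G$, is a unital subalgebra of $C(Q(\beta))$, since $e^{c(g)}e^{c(h)} = e^{c(gh)}$ and $e^{c(e_0)} = 1$, and it separates the points of $Q(\beta)$ because distinct homomorphisms take different values at some $g$; by the Stone--Weierstrass theorem it is dense in $C(Q(\beta))$, whence $\nu_1 = \nu_2$.

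I expect the injectivity step to be the main obstacle: it amounts to the assertion that the abelian $\beta$-KMS states form a Bauer simplex, and the one genuinely non-formal ingredient is the Stone--Weierstrass observation that the exponentials $c \mapsto e^{c(g)}$ generate a point-separating unital subalgebra of $C(Q(\beta))$, which is what forces the representing measure to be unique. Everything else is a careful but mechanical chase through the identifications already established between KMS states, KMS measures and harmonic vectors.
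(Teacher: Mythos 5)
Your proposal is correct and follows essentially the same route as the paper: surjectivity via a Choquet decomposition over the extreme points (the paper gets these from Lemma~\ref{bernoulli2}, which rests on the same characterization you take directly from Lemma~\ref{??}), and injectivity by exactly the paper's Stone--Weierstrass argument applied to the exponentials $c \mapsto e^{c(g)}$. The differences are cosmetic: you run the Choquet argument on $\Delta$ rather than on the set of abelian $\beta$-KMS measures, and you make explicit the routine continuity and compactness step (weak*-continuity of $\nu \mapsto \omega_{\nu}$ plus compact-to-Hausdorff) that the paper leaves implicit.
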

\begin{proof} Given a Borel probability measure $\nu$ on $Q(\beta)$ we can define a Borel probability measure $m$ on $Y^{\mathbb N}$ such that
$$
m(B) = \int_{Q(\beta)} b_c(B) \ \mathrm{d} \nu(c)
$$
for every Borel subset $B \subseteq Y^{\mathbb N}$. It is clear that $m$ is an abelian $\beta$-KMS measure since all $b_c, c \in Q(\beta)$ are, and it follows from Lemma \ref{bernoulli2} and Choquet theory that we obtain all abelian $\beta$-KMS measures $m$ on $Y^{\mathbb N}$ this way. Set
$$
\omega_{\nu}(a) = \int_{Y^{\mathbb N}} E(a) \ \mathrm{d} m,
$$
and note that \eqref{bdint} holds. Hence the map under consideration is surjective onto the abelian $\beta$-KMS states for $\alpha^F$. To see that it is also injective assume that $\nu$ and $\nu'$ are Borel probability measures on $Q(\beta)$ such that the corresponding states defined by (\ref{bdint}) are the same. Then
$$
\int_{Q(\beta)} \int_{Y^{\mathbb N}} f \ \mathrm{d} b_c  \ \mathrm{d} \nu(c) = \int_{Q(\beta)} \int_{Y^{\mathbb N}} f \ \mathrm{d} b_c  \ \mathrm{d} \nu'(c)
$$
for all $f \in C\left(Y^{\mathbb N}\right)$. Taking $f$ to be the characteristic function of $tY^{\mathbb N}$ we find that
$$
\int_{Q(\beta)} \prod_{i=1}^n e^{c(t_i)} \ \mathrm{d}\nu'(c) = \int_{Q(\beta)} \prod_{i=1}^n e^{c(t_i)} \ \mathrm{d}\nu(c)
$$
for all $t \in Y^n$ and all $n$. This shows that integration with respect to $\nu$ and $\nu'$ give the same functional on the algebra of functions on $Q(\beta)$ generated by the maps
$Q(\beta)  \ni c \mapsto e^{c(s)} , \ s \in Y$. This algebra is dense in $C(Q(\beta))$ by the Stone-Weierstrass theorem and it follows therefore that $\nu = \nu'$.

 \end{proof}

 \subsection{A closer look at $Q(\beta)$}

First the case where the abelianization of $G$ is finite:

\begin{prop}\label{Gabbb} When the abelianization $G/[G,G]$ of $G$ is trivial or a finite group there is an abelian $\beta$-KMS measure if and only if
\begin{equation}\label{cond}
\sum_{s \in Y} e^{-\beta F(s)} = 1.
\end{equation} 
When it exists, the abelian $\beta$-KMS measure is unique and it is the Bernoulli measure corresponding to the probability vector $p(s) = e^{-\beta F(s)}$.
\end{prop}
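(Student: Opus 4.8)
The plan is to compute the compact set $Q(\beta)$ explicitly and then quote Theorem \ref{Q-descr}. The only input needed beyond the general theory already developed is an elementary remark about homomorphisms: any $c \in \Hom(G,\mathbb R)$ annihilates the commutator subgroup $[G,G]$ and therefore factors through the abelianization $G/[G,G]$; since $G/[G,G]$ is finite, every one of its elements has finite order, and since $\mathbb R$ is torsion free this forces the induced map $G/[G,G] \to \mathbb R$, hence $c$ itself, to be identically zero. Thus $\Hom(G,\mathbb R) = \{0\}$. (The case where $G/[G,G]$ is trivial is the special case $\#\bigl(G/[G,G]\bigr) = 1$ and needs no separate treatment.)

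Consequently $Q(\beta) \subseteq \{0\}$, with $0 \in Q(\beta)$ precisely when $\sum_{s \in Y} e^{-\beta F(s)} = 1$, i.e.\ exactly under condition \eqref{cond}. So if \eqref{cond} fails then $Q(\beta) = \emptyset$; there are then no Borel probability measures on $Q(\beta)$, and hence, by the affine homeomorphism of Theorem \ref{Q-descr}, no abelian $\beta$-KMS states — equivalently, by Proposition \ref{bac} together with the correspondences set up above, no abelian $\beta$-KMS measures. If instead \eqref{cond} holds, then $Q(\beta) = \{0\}$ is a single point, which carries a unique Borel probability measure, namely the point mass at $0$; Theorem \ref{Q-descr} then yields a unique abelian $\beta$-KMS state, and correspondingly a unique abelian $\beta$-KMS measure, which is exactly $b_0$, the Bernoulli measure built from the probability vector $p(s) = e^{c(s) - \beta F(s)}\big|_{c = 0} = e^{-\beta F(s)}$ (this $p$ being a genuine probability vector precisely because \eqref{cond} holds). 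This is the assertion of the proposition.

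Alternatively one can bypass $Q(\beta)$ and work directly on the harmonic side: once $\Hom(G,\mathbb R) = \{0\}$, the constant vector $\psi \equiv 1$ is the only multiplicative normalized vector available, and it is $\beta$-harmonic if and only if \eqref{cond} holds; since the set $\Delta$ of abelian normalized $\beta$-harmonic vectors is compact and its extreme points are multiplicative by Lemma \ref{??}, Krein--Milman forces $\Delta = \emptyset$ when \eqref{cond} fails and $\Delta = \{\psi \equiv 1\}$ when it holds, and \eqref{nhform} turns the latter into the stated Bernoulli measure. Either way there is nothing genuinely hard here: the one point worth isolating is the torsion-freeness observation that collapses $\Hom(G,\mathbb R)$ to a single point, after which the statement is a direct reading of Theorem \ref{Q-descr}.
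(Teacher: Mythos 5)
Your proposal is correct and follows essentially the same route as the paper: the paper's proof is exactly the observation that $\Hom(G,\mathbb R)=\{0\}$ when $G/[G,G]$ is finite, so that $Q(\beta)$ is either empty or the single point $0$ according to whether \eqref{cond} fails or holds, after which Theorem \ref{Q-descr} gives the statement. You merely spell out the torsion-freeness argument and add an optional second route via Lemma \ref{??}, both of which are fine.
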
 
\begin{proof} This follows straightforwardly from Theorem \ref{Q-descr} since $\Hom(G,\mathbb R) = \{0\}$ when $G/[G,G]$ is finite.
\end{proof}

\begin{cor}\label{Margulis1} Assume that $G$ is nilpotent and that the abelianization $G/[G,G]$ of $G$ is trivial or a finite group. There is a $\beta$-KMS measure if and only if
\begin{equation}\label{cond}
\sum_{s \in Y} e^{-\beta F(s)} = 1.
\end{equation} 
When it exists, the $\beta$-KMS measure is unique and it is the Bernoulli measure corresponding to the probability vector $p_s = e^{-\beta F(s)}$.
\end{cor}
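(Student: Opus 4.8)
The statement is a direct corollary, and the plan is simply to reduce it to Proposition~\ref{Gabbb} using nilpotency. The key point, already recorded in the paragraph preceding Theorem~\ref{margulis}, is that when $G$ is nilpotent every normalized $\beta$-harmonic vector is abelian, i.e.\ lies in $\Delta$. To see this, note first that the set $K_\beta$ of normalized $\beta$-harmonic vectors is a closed subset of a product $\prod_{g\in G}[0,M_g]$ for suitable constants $M_g$ — obtained by iterating the inequality $e^{-\beta F(s)}\psi_s\le\psi_{e_0}=1$ ($s\in Y$) along words in $Y$ — and hence is compact, and it is clearly convex. By Margulis' theorem (Theorem~\ref{margulis}) every extreme point of $K_\beta$ is multiplicative, hence abelian, since a multiplicative $\psi$ satisfies $\psi_{hgk}=\psi_h\psi_g\psi_k=\psi_h\psi_k\psi_g=\psi_{hkg}$. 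As $\Delta$ is closed, convex and contains every extreme point of $K_\beta$, the Krein--Milman theorem gives $K_\beta\subseteq\Delta$, so in fact $K_\beta=\Delta$.

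Given this, I would argue as follows. By Proposition~\ref{bac} the formula \eqref{nhform} is a bijection between the $\beta$-KMS measures on $Y^{\mathbb N}$ and the vectors in $K_\beta=\Delta$, so the $\beta$-KMS measures coincide exactly with the abelian $\beta$-KMS measures. Proposition~\ref{Gabbb} then applies verbatim: since $G/[G,G]$ is trivial or finite there is an abelian $\beta$-KMS measure if and only if $\sum_{s\in Y}e^{-\beta F(s)}=1$, and when it exists it is the unique Bernoulli measure attached to $p_s=e^{-\beta F(s)}$. This is precisely the assertion of the corollary, and if one prefers the statement at the level of states it transports immediately through the affine homeomorphism between $\beta$-KMS measures and $\beta$-KMS states.

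There is essentially no obstacle here beyond invoking the right results in the right order; the only step needing a genuine (short) argument is the reduction ``$G$ nilpotent $\Rightarrow K_\beta=\Delta$'', which rests solely on Theorem~\ref{margulis} together with compactness of $K_\beta$. Alternatively one can bypass Proposition~\ref{Gabbb} and work directly with harmonic vectors: a multiplicative normalized $\psi$ has $\psi_{g^{-1}}\psi_g=\psi_{e_0}=1$, so $\psi_g>0$ and $c:=\log\psi$ defines an element of $\Hom(G,\mathbb R)$; since $G/[G,G]$ is finite, $\Hom(G,\mathbb R)=\{0\}$ and $\psi$ is the constant function $\mathbf 1$, which is $\beta$-harmonic precisely when $\sum_{s\in Y}e^{-\beta F(s)}=1$. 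Krein--Milman then forces $K_\beta=\{\mathbf 1\}$ in that case and $K_\beta=\emptyset$ otherwise, and \eqref{nhform} identifies the corresponding measure as the Bernoulli measure of $p_s=e^{-\beta F(s)}$ — but this only re-proves the special case of Proposition~\ref{Gabbb} that is being used.
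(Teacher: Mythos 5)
Your proposal is correct and follows essentially the paper's own route: the paper also deduces the corollary from Proposition~\ref{Gabbb} via the remark preceding Theorem~\ref{margulis} that, for nilpotent $G$, Krein--Milman together with Margulis' theorem shows every normalized $\beta$-harmonic vector (hence every KMS measure) is abelian. Your added details --- the compactness of the set of normalized $\beta$-harmonic vectors via the bound $\psi_g \le e^{\beta F(t)}$ for a word $t$ with $\overline{t}=g$, and the direct identification of multiplicative vectors with $\Hom(G,\mathbb R)=\{0\}$ --- simply make explicit what the paper leaves implicit.
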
 

Note that the one-parameter group $\alpha^F$ is the restriction to $O_Y(G)$ of an action on $O_Y$. Any KMS state for the action on $O_Y$ will restrict to a KMS state for $\alpha^F$. It follows from work of Exel and Laca, \cite{EL}, at least when $F$ is strictly positive, that the action on $O_Y$ has exactly one KMS state. The abelian KMS state in Proposition \ref{Gabbb} is the restriction to $O_Y(G)$ of that state.

Consider now the case where the rank of $G/[G,G]$ is positive, say $n \geq 1$. Then $\Hom (G,\mathbb R) \simeq \mathbb R^n$ and we choose $n$ linearly independent elements $c'_i \in \Hom (G,\mathbb R), \ i = 1,2,\cdots, n$. For each $s \in Y$, set
$$
c_s = \left(c'_1(s),c'_2(s), \cdots, c'_n(s)\right) \in \mathbb R^n.
$$
Then
\begin{equation}\label{Qbetadef}
Q(\beta) \simeq \left\{ u\in \mathbb R^n: \ \sum_{s \in Y} \exp\left( u \cdot c_s - \beta F(s)\right) = 1 \right\} 
\end{equation}
when we let $\cdot$ denote the canonical inner product in $\mathbb R^n$.

\begin{lemma}\label{uniqueu} There is a unique vector $u(\beta) \in \mathbb R^n$ such that
\begin{equation}\label{ubeta2}
 \sum_{s \in Y}\exp\left(u\cdot c_s - \beta F(s)\right) \ > \  \sum_{s \in Y}\exp\left(u(\beta) \cdot c_s - \beta F(s)\right)
\end{equation}
 for all $u \in \mathbb R^n \backslash \{u(\beta)\}$. $u(\beta)$ is determined by the condition that
 \begin{equation}\label{ubeta}
\sum_{s \in Y} c_s \exp\left(u(\beta)\cdot c_s - \beta F(s)\right) = 0.
\end{equation}
\end{lemma}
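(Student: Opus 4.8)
The plan is to read (\ref{ubeta2}) as the statement that $u(\beta)$ is the unique global minimiser of the smooth function
\[
\Phi(u) \;=\; \sum_{s\in Y}\exp\!\bigl(u\cdot c_s-\beta F(s)\bigr), \qquad u\in\mathbb R^n ,
\]
and (\ref{ubeta}) as the vanishing of its gradient $\nabla\Phi(u)=\sum_{s\in Y}c_s\exp(u\cdot c_s-\beta F(s))$. So I would establish three things about $\Phi$: it is strictly convex, it is coercive, and therefore it has exactly one critical point, which is its unique minimum.

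For strict convexity, note that $\Phi$ is a finite sum of the convex functions $u\mapsto\exp(u\cdot c_s-\beta F(s))$ and that its Hessian at any $u$ is $\sum_{s\in Y}\exp(u\cdot c_s-\beta F(s))\,c_sc_s^{\mathsf T}$; for $v\neq 0$ this satisfies $v^{\mathsf T}(\cdot)v=\sum_{s\in Y}\exp(u\cdot c_s-\beta F(s))(v\cdot c_s)^2$, which is strictly positive as soon as the vectors $\{c_s:s\in Y\}$ span $\mathbb R^n$. To get the spanning property, observe that the $n\times\#Y$ matrix with entries $c'_i(s)$ has linearly independent rows: a homomorphism $G\to\mathbb R$ is determined by its values on $Y$ since $Y$ generates $G$ as a semigroup, so a linear relation among the rows would be a linear relation among the $c'_i$, contradicting their choice. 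Hence that matrix has rank $n$ and its columns $c_s$ span $\mathbb R^n$, so $\Phi$ has positive definite Hessian everywhere and is strictly convex.

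For coercivity, for a unit vector $v\in\mathbb R^n$ put $\delta(v)=\max_{s\in Y}v\cdot c_s$. The key claim is $\delta(v)>0$ (equivalently $0\in\operatorname{Int}\co\{c_s:s\in Y\}$): the homomorphism $\varphi=\sum_i v_ic'_i$ is nonzero, so $\varphi(g)>0$ for some $g\in G$ after possibly replacing $g$ by $g^{-1}$, and writing $g=s_1\cdots s_k$ with $s_j\in Y$ gives $0<\varphi(g)=\sum_j v\cdot c_{s_j}$, whence $v\cdot c_{s_j}>0$ for some $j$. Since $\delta$ is a maximum of finitely many linear functions it is continuous, so $\delta_0:=\min_{\|v\|=1}\delta(v)>0$ by compactness of the sphere, and then $\Phi(u)\ge\exp\!\bigl(\delta_0\|u\|-\beta\max_{s\in Y}F(s)\bigr)$ for $u\neq 0$, so $\Phi(u)\to\infty$ as $\|u\|\to\infty$. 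A continuous coercive function on $\mathbb R^n$ attains its infimum, so a minimiser $u(\beta)$ exists; it is a critical point, which is exactly (\ref{ubeta}). By strict convexity any critical point of $\Phi$ is its unique global minimiser, so $u(\beta)$ is unique, (\ref{ubeta2}) holds as a strict inequality off $u(\beta)$, and conversely any solution of (\ref{ubeta}) is a critical point and therefore equals $u(\beta)$.

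The main obstacle is the positivity $\delta(v)>0$: this is the one point where it is essential that $Y$ generates $G$ \emph{as a semigroup} and not merely as a group, and the same ingredient (through linear independence of the $c'_i$ on $Y$) is what makes the Hessian nondegenerate; everything else is standard finite-dimensional convex analysis.
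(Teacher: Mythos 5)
Your proof is correct and follows essentially the same route as the paper: strict convexity of $u\mapsto\sum_{s\in Y}e^{u\cdot c_s-\beta F(s)}$ together with coercivity, the key point in both arguments being that for every unit vector $v$ there is some $s\in Y$ with $v\cdot c_s>0$, deduced from the fact that $Y$ generates $G$ as a semigroup and the linear independence of the $c'_i$. Your write-up merely makes explicit two details the paper glosses over, namely that strict convexity requires $\{c_s : s\in Y\}$ to span $\mathbb R^n$, and the compactness-of-the-sphere argument that upgrades the directional positivity to the uniform growth estimate forcing the sum to tend to infinity as $\|u\|\to\infty$.
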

\begin{proof} The function $\mathbb R^n \ni u \mapsto \sum_{s \in Y}e^{-\beta F(s)}
e^{u  \cdot c_s}$ is strictly convex since the exponential function
is. It has therefore at most one local minimum, which is necessarily a
global minimum. It follows that the global minimum, if it exists, occurs at the unique
$u(\beta) \in \mathbb R^n$ where the gradient is $0$, i.e. the vector $u(\beta)$ for which \eqref{ubeta} holds. It suffices therefore to show that
\begin{equation}\label{infty7} 
\lim_{\|u \| \to \infty} \sum_{s \in Y}\exp\left(u\cdot c_s - \beta F(s)\right) = \infty .
\end{equation}
To establish (\ref{infty7}) it suffices to show that for each $v \in \mathbb R^n, \|v\| =1$, there is an $s \in Y$ such that $v \cdot c_s > 0$. Assume for a contradiction that $\|v\| = 1$ and $v \cdot c_s \leq 0$ for all $s \in Y$. Define $c : G \to \mathbb R^n$ such that $c(g) = \left(c'_1(g),c'_2(g) , \cdots , c'_n(g)\right)$. Since $v \cdot c_s \leq 0$ it follows that $v \cdot c(g) \leq 0$ for all $g \in G$, and hence also $-v \cdot c(g) = v \cdot c\left(g^{-1}\right) \leq 0$ for all $g \in G$. This implies $v_1c'_1 + v_2c'_2 + \cdots + v_nc'_n = 0$, contradicting the choice of the $c'_i$'s.
\end{proof}

In view of Lemma \ref{uniqueu} and \eqref{Qbetadef} we must disinguish between the following three cases:
\smallskip

\subsubsection{ $\sum_{s \in Y}e^{-\beta F(s)} e^{u(\beta) \cdot c_s} > 1$.}\label{case1}

Then $Q(\beta) = \emptyset$.

\subsubsection{$\sum_{s \in Y}e^{-\beta F(s)} e^{u(\beta) \cdot c_s} = 1$.}\label{case2} Then $Q(\beta) = \{u_0\}$, where $u_0 \in \Hom(G,\mathbb R)$ is determined by the condition that
$u_0(s) = u(\beta) \cdot c_s \ \ s \in Y$. The corresponding $\beta$-KMS measure is the Bernoulli measure on $Y^{\mathbb N}$ defined by the probability vector $p_s =e^{-\beta F(s)} e^{u(\beta) \cdot c_s}$.

\subsubsection{$\sum_{s \in Y}e^{-\beta F(s)} e^{u(\beta) \cdot c_s} < 1$.}\label{case3}

Then $Q(\beta)$ is homeomorphic to the $(n-1)$-sphere
$$
S^{n-1} =  \left\{ v \in \mathbb R^{n} : \ \|v\| = 1 \right\}.
$$
This follows from

\begin{lemma}\label{what12}  Assume that $\sum_{s \in Y}e^{-\beta F(s)} e^{u(\beta) \cdot c_s} < 1$. For every $v \in S^{n-1}$ there is a unique positive number $t_{\beta}(v)$ such that
$$
u(\beta) + t_{\beta}(v)v \in Q(\beta)
$$
and the map $S^{n-1} \ni v \mapsto u(\beta) + t_{\beta}(v)v$ is a homeomorphism from $S^{n-1}$ onto $Q(\beta)$.
\end{lemma}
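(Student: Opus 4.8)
The plan is to parametrize $Q(\beta)$ radially from the minimizer $u(\beta)$. Set $g(u) = \sum_{s \in Y} \exp\left(u \cdot c_s - \beta F(s)\right)$; by Lemma \ref{uniqueu} and its proof this function is strictly convex, proper (i.e. $g(u) \to \infty$ as $\|u\| \to \infty$), attains its global minimum at the unique point $u(\beta)$ where $\nabla g(u(\beta)) = 0$, and by hypothesis $g(u(\beta)) < 1$, so in particular $u(\beta) \notin Q(\beta) = g^{-1}(1)$.

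First I would fix $v \in S^{n-1}$ and study the one-variable function $h_v(t) = g(u(\beta) + tv)$ for $t \geq 0$. It is strictly convex as the restriction of a strictly convex function to a line, $h_v(0) = g(u(\beta)) < 1$, and $h_v'(0) = \nabla g(u(\beta)) \cdot v = 0$; strict convexity then forces $h_v'(t) > 0$ for $t > 0$, so $h_v$ is strictly increasing on $[0,\infty)$. Since $\|u(\beta) + tv\| \to \infty$ as $t \to \infty$, properness of $g$ gives $h_v(t) \to \infty$. By the intermediate value theorem there is therefore a unique positive $t_\beta(v)$ with $h_v(t_\beta(v)) = 1$, i.e. $u(\beta) + t_\beta(v) v \in Q(\beta)$, and no other positive $t$ has this property.

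Next I would check that $\Phi(v) := u(\beta) + t_\beta(v) v$ is a bijection onto $Q(\beta)$. For surjectivity, any $w \in Q(\beta)$ satisfies $w \neq u(\beta)$, so writing $t = \|w - u(\beta)\| > 0$ and $v = (w-u(\beta))/t \in S^{n-1}$ one has $h_v(t) = g(w) = 1$, whence $t = t_\beta(v)$ and $w = \Phi(v)$. Injectivity is immediate: $\Phi(v) = \Phi(v')$ forces $t_\beta(v) v = t_\beta(v') v'$, and taking norms gives $t_\beta(v) = t_\beta(v')$ and then $v = v'$.

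Finally I would treat continuity. I expect this to be the only mildly delicate point, and I would handle it by a compactness-and-uniqueness argument: if $v_k \to v$ in $S^{n-1}$, then since $u(\beta) + t_\beta(v_k) v_k \in g^{-1}([0,1])$, which is compact by properness of $g$, the numbers $t_\beta(v_k)$ are bounded; any subsequential limit $t^*$ satisfies $g(u(\beta) + t^* v) = 1$ by continuity of $g$ and $t^* > 0$ since $g(u(\beta)) < 1$, hence $t^* = t_\beta(v)$ by uniqueness, so $t_\beta(v_k) \to t_\beta(v)$. Thus $\Phi$ is continuous, and a continuous bijection from the compact space $S^{n-1}$ onto the Hausdorff space $Q(\beta)$ (with its topology as a subspace of $\mathbb{R}^n \simeq \Hom(G,\mathbb{R})$) is a homeomorphism. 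Everything other than this continuity step is a direct consequence of strict convexity, the vanishing of $\nabla g$ at $u(\beta)$, and properness of $g$ already established in the proof of Lemma \ref{uniqueu}.
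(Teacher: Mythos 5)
Your proof is correct and follows essentially the same route as the paper: a radial parametrization of $Q(\beta)$ from the minimizer $u(\beta)$, with existence and uniqueness of $t_{\beta}(v)$ along each ray coming from strict convexity, the vanishing gradient at $u(\beta)$, and properness (i.e.\ \eqref{infty7}), together with the same surjectivity computation. The only difference is in one step: the paper obtains continuity of $v \mapsto t_{\beta}(v)$ from the implicit function theorem (using $f_v'(t_{\beta}(v)) > 0$), whereas you use a compactness-and-uniqueness subsequence argument and then invoke explicitly the compact-to-Hausdorff fact for the inverse continuity, which the paper leaves implicit.
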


\begin{proof} 
Let $v \in \mathbb R^n$ be a unit
vector. The function $f_v : \mathbb R \to \mathbb R$ given by
$$
f_v(t) = \sum_{s \in Y} \exp \left((u(\beta)  +
  tv)\cdot c_s - \beta F(s)\right) = \sum_{s \in Y}e^{-\beta F(s)} e^{u(\beta) \cdot c_s}e^{t
  v\cdot c_s}
$$
has a unique local minimum when $t = 0$ where $f_v(0) < 1$. It follows from \eqref{infty7} that
$\lim_{t \to \pm \infty} f_v(t) = \infty$.
There are therefore unique real numbers $t_-,t_+ \in \mathbb R$ with $t_{-} < 0 < t_{+}$ such
that $f_v(t_-) = f_v(t_+) = 1$. Set $t_{\beta}(v) =t_+$ and note that $f_v'(t_{\beta}(v)) > 0$. It follows therefore from the
implicit function theorem that $t_{\beta}(v)$ is a differentiable, in particular continuous
function of $v$. As a consequence also the map 
$$
S^{n-1} \ni v \to u(\beta) + t_{\beta}(v)v \in Q(\beta)
$$
is continuous. It is easy to see that it is injective. To prove surjectivity let $u \in Q(\beta)$. Then $u \neq u(\beta)$ and we set
$$
v': = \frac{u - u(\beta)}{\lVert u - u(\beta) \rVert} \in S^{n-1}.
$$
Observe that $t_{\beta}(v') = \lVert u - u(\beta) \rVert$ since this is a positive number and
$$
\sum_{s \in Y}  \exp\left((u(\beta) + \lVert u - u(\beta) \rVert v') \cdot c_s - \beta F(s)\right) = \sum_{s \in Y} e^{-\beta F(s)} e^{u \cdot c_s}=1.
$$
It follows that $u(\beta)+t_{\beta}(v')v'=u$.
\end{proof}

The abelian $\beta$-KMS measure on $Y^{\mathbb N}$ corresponding to the vector $v \in S^{n-1}$ is the Bernoulli measure given by the probability vector $p$, where
$$
p(s)= \exp\left( \left(u(\beta) + t_{\beta}(v) v\right) \cdot c_s - \beta F(s)\right).
$$

\bigskip

\begin{assumption}
Assume now that $F(s) > 0$ for all $s \in Y$. 
\end{assumption}

\smallskip

\begin{lemma}\label{nl1} For each $u \in \mathbb{R}^{n}$ there exists a unique number $\beta(u) \in \mathbb{R}_{+}$ such that
$$
\sum_{s \in {Y}} e^{-\beta(u) F(s)} e^{u \cdot c_s}=1 \ .
$$
The function $\mathbb{R}^{n} \ni u \to \beta(u) \in \mathbb{R}_{+}$ is continuous.
\end{lemma}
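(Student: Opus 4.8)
The plan is to fix $u \in \mathbb{R}^n$ and study the single-variable function $\phi_u : \mathbb{R} \to \mathbb{R}_+$ given by
$$
\phi_u(\beta) = \sum_{s \in Y} e^{-\beta F(s)} e^{u \cdot c_s}.
$$
Since each $F(s) > 0$ by the standing assumption, $\phi_u$ is a finite sum of strictly decreasing (indeed strictly convex) positive exponential functions, hence itself strictly decreasing and continuous on $\mathbb{R}$. Therefore $\phi_u$ takes the value $1$ at most once, which gives uniqueness of $\beta(u)$ immediately.

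For existence, I would compute the two limits. As $\beta \to -\infty$, every term $e^{-\beta F(s)} e^{u \cdot c_s} \to \infty$ (again because $F(s) > 0$), so $\phi_u(\beta) \to \infty$; in particular $\phi_u(0) = \sum_{s \in Y} e^{u \cdot c_s} > 1$ is not by itself enough, but combined with $\lim_{\beta \to +\infty} \phi_u(\beta) = 0$ (each term tends to $0$ since $-\beta F(s) \to -\infty$) and continuity, the intermediate value theorem produces a $\beta$ with $\phi_u(\beta) = 1$. To see that this $\beta$ is strictly positive, note that $\phi_u(0) = \sum_{s \in Y} e^{u \cdot c_s}$. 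Here one must use that $n \geq 1$ and the $c'_i$ are linearly independent: as in the proof of Lemma~\ref{uniqueu}, for the zero vector there is no loss, but more to the point, $\sum_{s \in Y} e^{u \cdot c_s} \geq \# Y \geq 2 > 1$ by the AM–GM inequality together with $\sum_{s \in Y} u \cdot c_s = u \cdot (\sum_{s} c_s)$ — actually the clean argument is: since $\phi_u$ is strictly convex in $\beta$ and $\phi_u(\beta) \to \infty$ as $\beta \to \pm\infty$ would be false at $+\infty$; instead I will simply observe $\phi_u(0) > 1$ directly. Indeed, by strict convexity of the exponential and the fact that the multiset $\{u \cdot c_s\}_{s \in Y}$ has at least two entries, $\sum_{s \in Y} e^{u \cdot c_s} > \# Y \cdot e^{\frac{1}{\# Y}\sum_s u\cdot c_s}$ if the entries are not all equal, and $= \#Y \cdot 1$ only in a degenerate case; in every case $\phi_u(0) \geq \# Y \geq 2 > 1$ because $\sum_{s\in Y} e^{u\cdot c_s} \ge \#Y$ would require all exponents to be $0$, and even then the value is $\#Y \ge 2$. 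Hence $\phi_u(0) > 1 > \lim_{\beta\to+\infty}\phi_u(\beta)$, and strict monotonicity forces the unique solution $\beta(u)$ to lie in $(0,\infty)$.

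Continuity of $u \mapsto \beta(u)$ is where I would invoke the implicit function theorem, exactly as in Lemma~\ref{what12}. The function $\Phi(u,\beta) = \sum_{s \in Y} e^{-\beta F(s)} e^{u \cdot c_s} - 1$ is smooth on $\mathbb{R}^n \times \mathbb{R}$, and
$$
\frac{\partial \Phi}{\partial \beta}(u,\beta) = -\sum_{s \in Y} F(s) e^{-\beta F(s)} e^{u \cdot c_s} < 0
$$
everywhere, since each $F(s) > 0$ and each exponential is positive. Thus at every point $(u, \beta(u))$ the partial derivative in $\beta$ is nonzero, and the implicit function theorem yields that $\beta(u)$ is locally a smooth, in particular continuous, function of $u$; since $u$ was arbitrary, $\beta$ is continuous (indeed smooth) on all of $\mathbb{R}^n$.

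I do not anticipate a serious obstacle here: the only point requiring a little care is verifying that the solution is genuinely positive rather than merely real, and for that the key input is the standing assumption $\# Y \geq 2$, which forces $\phi_u(0) = \sum_{s \in Y} e^{u \cdot c_s} \geq \# Y \geq 2 > 1$; combined with $\phi_u(\beta) \to 0$ as $\beta \to +\infty$ and strict monotonicity, this pins $\beta(u)$ down in $\mathbb{R}_+$. Everything else is a routine application of monotonicity, the intermediate value theorem, and the implicit function theorem.
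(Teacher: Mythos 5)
Your overall strategy (fix $u$, observe that $\phi_u(\beta)=\sum_{s\in Y}e^{-\beta F(s)}e^{u\cdot c_s}$ is continuous and strictly decreasing because $F>0$, get existence from the intermediate value theorem, uniqueness from strict monotonicity, and continuity of $u\mapsto\beta(u)$ from the implicit function theorem) is the same as the paper's. The gap is in the one step that actually needs the structure of the problem, namely that $\beta(u)$ is \emph{positive}. You assert that $\phi_u(0)=\sum_{s\in Y}e^{u\cdot c_s}\geq \#Y$ ``in every case''. Jensen's inequality only gives
$$
\sum_{s\in Y}e^{u\cdot c_s}\ \geq\ \#Y\,\exp\Bigl(\tfrac{1}{\#Y}\,u\cdot\sum_{s\in Y}c_s\Bigr),
$$
and there is no reason for $u\cdot\sum_{s\in Y}c_s$ to be nonnegative: $\sum_{s\in Y}c_s$ need not vanish, because $Y$ is only assumed to generate $G$ as a semigroup and is not assumed symmetric. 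Concretely, take $G=\mathbb{Z}$, $Y=\{-1,2\}$ (which does generate $\mathbb{Z}$ as a semigroup), $c'_1=\operatorname{id}$, $F\equiv 1$ and $u=-1/10$; then $\phi_u(0)=e^{1/10}+e^{-1/5}\approx 1.92<2=\#Y$. So the inequality you rely on is false, and your stated justification that the solution lies in $(0,\infty)$ collapses; note also that your argument never actually uses the linear independence of the $c'_i$, which you correctly flag as necessary.

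The conclusion $\phi_u(0)>1$ is nevertheless true, and the repair is the argument the paper quotes from the proof of Lemma~\ref{uniqueu}: if $u\cdot c_s<0$ for every $s\in Y$, then since every $g\in G$ is a product of elements of $Y$ one gets $u\cdot c(g)\leq 0$ for all $g\in G$, and applying this to $g^{-1}$ as well forces $u_1c'_1+\cdots+u_nc'_n=0$, contradicting linear independence (for $u=0$ the claim is trivial). Hence some $s'\in Y$ satisfies $u\cdot c_{s'}\geq 0$, and since $\#Y\geq 2$ and all terms are strictly positive, $\phi_u(0)>e^{u\cdot c_{s'}}\geq 1$. With this substituted for your Jensen/AM--GM paragraph, the rest of your proof (strict monotonicity, $\lim_{\beta\to\infty}\phi_u(\beta)=0$, implicit function theorem for continuity) is correct and coincides with the paper's.
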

\begin{proof}
Let $u \in \mathbb{R}^{n}$ and set
$$
g(t) = \sum_{s \in {Y}} e^{-t F(s)} e^{u \cdot c_s} .
$$
As shown in the proof of Lemma \ref{uniqueu} there must be some $s' \in {Y}$ such that $u \cdot c_{s'} \geq 0$. It follows that $g(0) > e^{u \cdot c_{s'}}\geq 1$. Since $F >0$, the function $g$ is strictly decreasing with limit $0$ at infinity so there is a unique number $\beta(u) \in ]0,\infty[$ such that $g(\beta(u)) = 1$. Continuity of the function $u \mapsto \beta(u)$ follows from the implicit function theorem.
\end{proof}

We shall need the following observation regarding the function $\beta(u)$:

\begin{lemma}\label{fremog}
Let $\{u_{m}\} \subseteq \mathbb{R}^{n}$. Then $\beta(u_{m}) \to \infty$ if and only if $\lVert u_{m} \rVert \to \infty$.
\end{lemma}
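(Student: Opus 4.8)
The plan is to prove both implications by relating the size of $\|u_m\|$ to the behaviour of the defining equation $\sum_{s\in Y}e^{-\beta(u_m)F(s)}e^{u_m\cdot c_s}=1$. First I would dispose of the easy direction: suppose $\|u_m\|\to\infty$ but, after passing to a subsequence, $\beta(u_m)\to\beta_\infty\in[0,\infty)$. Writing $v_m=u_m/\|u_m\|$ and passing to a further subsequence so that $v_m\to v\in S^{n-1}$, we know from the argument in the proof of Lemma~\ref{uniqueu} that there is some $s_0\in Y$ with $v\cdot c_{s_0}>0$, hence $v_m\cdot c_{s_0}\geq\delta>0$ for large $m$, so that $u_m\cdot c_{s_0}\geq \delta\|u_m\|\to\infty$. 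Since $\beta(u_m)$ stays bounded, $e^{-\beta(u_m)F(s_0)}$ is bounded below by a positive constant, and therefore the single term $e^{-\beta(u_m)F(s_0)}e^{u_m\cdot c_{s_0}}$ tends to $\infty$, contradicting that the full sum equals $1$. This gives $\|u_m\|\to\infty \Rightarrow \beta(u_m)\to\infty$.

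For the converse, suppose $\beta(u_m)\to\infty$ but $\|u_m\|$ does not; pass to a subsequence with $u_m\to u_\infty\in\mathbb{R}^n$. By continuity of $u\mapsto\beta(u)$ (Lemma~\ref{nl1}) we get $\beta(u_m)\to\beta(u_\infty)<\infty$, contradicting $\beta(u_m)\to\infty$. So $\beta(u_m)\to\infty \Rightarrow \|u_m\|\to\infty$. This half is essentially immediate from Lemma~\ref{nl1}, so the content of the lemma really lies in the first direction.

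Thus the main obstacle is the first implication, and within it the key quantitative input is the uniform lower bound $v\cdot c_{s_0}>0$ for \emph{some} generator $s_0$, valid for every unit vector $v$ — exactly the statement extracted in the proof of Lemma~\ref{uniqueu} from the fact that the $c'_i$ are linearly independent in $\Hom(G,\mathbb{R})$. A mild technical point to handle carefully is that $s_0$ depends on the limiting direction $v$; since $Y$ is finite one avoids this by a compactness/subsequence argument as above, or alternatively one notes that $\min_{v\in S^{n-1}}\max_{s\in Y}v\cdot c_s$ is attained and strictly positive, giving a uniform $\delta>0$ with $\max_{s\in Y}v_m\cdot c_s\geq\delta$ for all $m$, and then for each $m$ picks the maximising generator. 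Either way the sum $\sum_{s\in Y}e^{-\beta(u_m)F(s)}e^{u_m\cdot c_s}$ has at least one term blowing up while $\beta(u_m)$ is bounded, which is the contradiction needed. No step requires more than these observations together with the finiteness of $Y$ and the positivity of $F$.
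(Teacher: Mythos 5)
Your proof is correct, and the substantive direction ($\lVert u_m\rVert\to\infty \Rightarrow \beta(u_m)\to\infty$) is exactly the paper's argument: pass to a subsequence with $\beta(u_m)$ bounded and $u_m/\lVert u_m\rVert\to v$, use the fact from the proof of Lemma~\ref{uniqueu} that $v\cdot c_{s'}>0$ for some $s'\in Y$, and let that single term of the sum blow up. The only (minor) difference is in the easy direction, where the paper estimates directly via $1\le\sum_{s\in Y}e^{-\beta(u_m)F(s)}e^{\lVert u_m\rVert L}$ with $L=\max_{s\in Y}\lVert c_s\rVert$, whereas you invoke the continuity of $u\mapsto\beta(u)$ from Lemma~\ref{nl1} along a convergent subsequence; both are valid and equally short.
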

\begin{proof} Set $L = \max \left\{\left\|c_s\right\| : \ s \in Y \right\}$. Then
$$
1=\sum_{s \in {Y}} e^{-\beta(u_{m}) F(s)} e^{u_{m} \cdot c_s} \leq \sum_{s \in {Y}} e^{-\beta(u_{m}) F(s)} e^{\|u_m\|L} , 
$$
showing that $\beta(u_m) \to \infty \ \Rightarrow \ \|u_m\| \to \infty$.  Assume then that $\lVert u_{m} \rVert \to \infty$ and for a contradiction also that $\beta(u_{m}) \nrightarrow \infty$. After passage to a subsequence we can assume that $\beta(u_{m}) $ is bounded by some $K$ and that $u_{m} / \lVert u_{m} \rVert \to v$ for some $v \in S^{n-1}$. As shown in the proof of Lemma \ref{uniqueu} there is a $s' \in {Y}$ such that $v \cdot c_{s'} >0$. Hence we get that
$$
1  \geq e^{-K F(s')} \exp \left(\lVert u_{m} \rVert  \frac{u_{m}}{\lVert u_{m} \rVert }\cdot c_{s'} \right ) \to \infty,
$$
a contradiction.
\end{proof}

\begin{thm}\label{KMSmain} Assume that the abelianization $G/[G,G]$ has rank $n \geq 1$ and that $F(s) > 0$ for all $s \in Y$. It follows that there is a $\beta_0 > 0$ such that
\begin{enumerate}
\item[$\bullet$] there are no abelian $\beta$-KMS states for $\alpha^F$ when $\beta < \beta_0$,
\item[$\bullet$] there is a unique abelian $\beta_0$-KMS state for $\alpha^F$, and
\item[$\bullet$] for all $\beta > \beta_0$ the simplex of abelian $\beta$-KMS states for $\alpha^F$ is affinely homeomorphic to the simplex of Borel probability measures on the $(n-1)$ sphere $S^{n-1}$. 
\end{enumerate}
\end{thm}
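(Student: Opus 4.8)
The plan is to read everything off from the sets $Q(\beta)$. By Theorem~\ref{Q-descr} the abelian $\beta$-KMS states for $\alpha^F$ are in affine homeomorphic correspondence with the Borel probability measures on $Q(\beta)$, so the whole assertion is a statement about how the compact set $Q(\beta)\subseteq\mathbb{R}^n$ depends on $\beta$. The first step is to note that, since $F>0$, for each fixed $u\in\mathbb{R}^n$ the function $t\mapsto\sum_{s\in Y}e^{u\cdot c_s-tF(s)}$ is strictly decreasing, so that $\sum_{s\in Y}e^{u\cdot c_s-\beta F(s)}=1$ holds exactly when $\beta=\beta(u)$, with $\beta(u)$ the number provided by Lemma~\ref{nl1}. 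In view of \eqref{Qbetadef} this says
\[
Q(\beta)=\{\,u\in\mathbb{R}^n:\ \beta(u)=\beta\,\},
\]
i.e.\ $Q(\beta)$ is the $\beta$-level set of the continuous map $\beta(\cdot)\colon\mathbb{R}^n\to\mathbb{R}_{+}$. I also write $\phi(\beta):=\sum_{s\in Y}e^{u(\beta)\cdot c_s-\beta F(s)}$ for the minimum value of the strictly convex function of Lemma~\ref{uniqueu}, so that the three cases in \S\ref{case1}--\S\ref{case3} are governed by the sign of $\phi(\beta)-1$.

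Next I would locate the critical inverse temperature. Lemma~\ref{fremog} gives $\beta(u)\to\infty$ as $\|u\|\to\infty$, so the sublevel sets of $\beta(\cdot)$ are bounded, hence (being closed by the continuity in Lemma~\ref{nl1}) compact; therefore $\beta(\cdot)$ attains a global minimum at some $u_0\in\mathbb{R}^n$, and I set $\beta_0:=\beta(u_0)=\min_{u\in\mathbb{R}^n}\beta(u)$, which is strictly positive because $\beta(u_0)\in\,]0,\infty[\,$ by Lemma~\ref{nl1}. The three regimes then follow quickly. If $\beta<\beta_0$ then no $u$ has $\beta(u)=\beta$, so $Q(\beta)=\emptyset$ and, by Theorem~\ref{Q-descr}, there are no abelian $\beta$-KMS states. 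If $\beta=\beta_0$ and some $u$ satisfied $\sum_{s\in Y}e^{u\cdot c_s-\beta_0F(s)}<1$, then strict monotonicity in $t$ would force $\beta(u)<\beta_0$, contradicting minimality; hence $\sum_{s\in Y}e^{u\cdot c_s-\beta_0F(s)}\ge1$ for all $u$, with equality at $u_0$ (since $\beta(u_0)=\beta_0$). Thus $\phi(\beta_0)=1$, and by strict convexity this value is attained only at $u(\beta_0)$; so $u_0=u(\beta_0)$, $Q(\beta_0)=\{u(\beta_0)\}$ is a single point, and Theorem~\ref{Q-descr} yields a unique abelian $\beta_0$-KMS state.

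For $\beta>\beta_0$ I would evaluate at $u_0$ and use strict monotonicity in $t$ once more:
\[
\phi(\beta)\le\sum_{s\in Y}e^{u_0\cdot c_s-\beta F(s)}<\sum_{s\in Y}e^{u_0\cdot c_s-\beta_0F(s)}=1 ,
\]
so $\phi(\beta)<1$ and Lemma~\ref{what12} applies: the map $S^{n-1}\ni v\mapsto u(\beta)+t_\beta(v)v\in Q(\beta)$ is a homeomorphism. Forming the image measure along this homeomorphism is an affine homeomorphism, for the weak$^{*}$ topologies, from the Borel probability measures on $S^{n-1}$ onto those on $Q(\beta)$, and composing with the affine homeomorphism of Theorem~\ref{Q-descr} produces the asserted affine homeomorphism between the simplex of abelian $\beta$-KMS states for $\alpha^F$ and the simplex of Borel probability measures on $S^{n-1}$.

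I do not expect a serious obstacle here: once Lemmas~\ref{uniqueu}, \ref{what12}, \ref{nl1} and~\ref{fremog} are in hand the proof is essentially an assembly, and the only delicate point is the gluing at $\beta_0$ --- showing that $\phi(\beta_0)$ equals exactly $1$. If it were $>1$ the set $Q(\beta_0)$ would be empty, if it were $<1$ it would already be an $(n-1)$-sphere, and only the value $1$ produces the collapse of the whole family of spheres to a single point at that one temperature; the argument for this plays the emptiness of $Q(\beta)$ below $\beta_0$ against the strict convexity of the functional at $\beta_0$. It is also worth recording the degenerate case $n=1$ explicitly, where $S^0$ is a two-point set, the Borel probability measures on it form an ordinary segment, and correspondingly $Q(\beta)$ is a two-point set for every $\beta>\beta_0$, matching the two roots $t_-<0<t_+$ of $f_v$ in the proof of Lemma~\ref{what12}.
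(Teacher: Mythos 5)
Your proposal is correct, and it assembles the same basic ingredients (Lemma~\ref{uniqueu}, Lemma~\ref{nl1}, Lemma~\ref{what12} and Theorem~\ref{Q-descr}) as the paper, but it locates the critical temperature by a genuinely different, ``dual'' route. The paper fixes $\beta$ and tracks the minimum value $\phi(\beta)=\sum_{s\in Y}e^{u(\beta)\cdot c_s-\beta F(s)}$ as a function of $\beta$: its proof first establishes continuity of $\beta\mapsto u(\beta)$ (a compactness argument via \eqref{infty7} and \eqref{ubeta2} that occupies the first paragraph), then defines $\beta_0=\inf\{\beta:\phi(\beta)\le 1\}$ and uses continuity to get $\phi(\beta_0)=1$, with the $\beta>\beta_0$ regime handled by the same monotonicity estimate you use. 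You instead fix $u$, observe (using $F>0$ and the strict monotonicity in $t$) that $Q(\beta)$ is exactly the level set $\{u:\beta(u)=\beta\}$ of the function from Lemma~\ref{nl1}, and define $\beta_0=\min_u\beta(u)$; the existence of the minimum comes from properness (Lemma~\ref{fremog}) plus continuity of $\beta(\cdot)$, and the identity $\phi(\beta_0)=1$ follows from a short monotonicity contradiction together with the strict uniqueness of the minimizer in Lemma~\ref{uniqueu}, which also pins $Q(\beta_0)$ down to the single point $u(\beta_0)$. What your route buys is that it bypasses the continuity of $\beta\mapsto u(\beta)$ entirely; the price is an appeal to Lemma~\ref{fremog}, which the paper's proof of this theorem does not use (it is reserved there for the $\mathrm{KMS}_{\infty}$ analysis). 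The final step --- pushing measures forward along the homeomorphism $S^{n-1}\to Q(\beta)$ of Lemma~\ref{what12} and composing with Theorem~\ref{Q-descr} --- is the same in both arguments, and your handling of the collapse at $\beta_0$ and of the case $n=1$ is sound.
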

\begin{proof} First observe that the function $\beta \to u(\beta)$ defined by Lemma \ref{uniqueu} is continuous. Indeed, assume that $\beta_n \to \beta$ in $\mathbb R$ and for a contradiction also that $u\left(\beta_n\right) \nrightarrow u(\beta)$. It follows from \eqref{infty7} that we can pass to a subsequence to arrange that $u(\beta_n) \to v \neq u(\beta)$. Then
$$
\lim_{n \to \infty} \sum_{s \in Y} e^{-\beta_n F(s)} e^{u(\beta_n) \cdot c_s} =  \sum_{s \in Y} e^{-\beta  F(s)} e^{v \cdot c_s} > \sum_{s \in Y} e^{-\beta  F(s)} e^{u(\beta) \cdot c_s},
$$
where the last inequality follows from \eqref{ubeta2}. It follows that for all large $n$,
$$
\sum_{s \in Y} e^{-\beta_n F(s)} e^{u(\beta_n) \cdot c_s} > \sum_{s \in Y} e^{-\beta_n F(s)} e^{u(\beta) \cdot c_s}, 
$$
in conflict with the definition of $u(\beta_n)$.

It follows from Lemma \ref{nl1} that there is $\beta \geq 0$ such that $\sum_{s \in Y} e^{-\beta F(s)} e^{u(\beta) \cdot c_s} \leq 1$. Set
$$
\beta_0 = \inf \left\{ \beta \in \mathbb R: \ \sum_{s \in Y} e^{-\beta F(s)} e^{u(\beta) \cdot c_s} \leq 1 \right\} .
$$
By continuity we must have that $\sum_{s \in Y} e^{-\beta_0 F(s)} e^{u(\beta_0) \cdot c_s} = 1$. Note that $\beta_0 > 0$ since $\sum_{s \in Y} e^{u(\beta_0) \cdot c_s} > 1$. For $\beta < \beta_0$ we are in case \ref{case1} and there are no $\beta$-KMS states since $Q(\beta) = \emptyset$. When $\beta = \beta_0$ we are in case \ref{case2} and there is a unique $\beta_0$-KMS state because $Q(\beta)$ contains exactly one element. Finally, when $\beta > \beta_0$, it follows from \eqref{ubeta2} that 
$$
\sum_{s \in Y} e^{-\beta F(s)} e^{u(\beta) \cdot c_s} \leq  \sum_{s \in Y} e^{-\beta F(s)} e^{u(\beta_0) \cdot c_s} < \sum_{s \in Y} e^{-\beta_0 F(s)} e^{u(\beta_0) \cdot c_s} =1.
$$
This means that we are in case \ref{case3} when $\beta > \beta_0$. 
\end{proof}

\begin{cor}\label{Margulis2}  Assume that $G$ is nilpotent, that the abelianization $G/[G,G]$ has rank $n \geq 1$ and that $F(s) > 0$ for all $s \in Y$. It follows that there is a $\beta_0 > 0$ such that
\begin{enumerate}
\item[$\bullet$] there are no $\beta$-KMS states for $\alpha^F$ when $\beta < \beta_0$,
\item[$\bullet$] there is a unique $\beta_0$-KMS state for $\alpha^F$, and
\item[$\bullet$] for all $\beta > \beta_0$ the simplex of $\beta$-KMS states for $\alpha^F$ is affinely homeomorphic to the simplex of Borel probability measures on the $(n-1)$ sphere $S^{n-1}$. 
\end{enumerate}
\end{cor}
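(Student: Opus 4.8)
The plan is to obtain this from Theorem~\ref{KMSmain} as soon as one knows that, for nilpotent $G$, every $\beta$-KMS state for $\alpha^F$ is abelian; this is the reduction already announced in the remark before Theorem~\ref{margulis}, and I would write it out as follows.

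By Proposition~\ref{bac} and the affine correspondences between $\beta$-KMS states, $\beta$-KMS measures and normalized $\beta$-harmonic vectors recorded just after it, which also match extreme points, it is enough to show that every normalized $\beta$-harmonic vector $\psi$ on $G$ is abelian. The set $K$ of all normalized $\beta$-harmonic vectors is convex and closed in $[0,\infty)^G$ with the product topology, and it is in fact compact: from \eqref{harmeq} and positivity one has $\psi_{hs}\le e^{\beta F(s)}\psi_h$ for all $h\in G$ and $s\in Y$, so choosing for each $g\in G$ a word $w_g\in\bigcup_n Y^n$ with $\overline{w_g}=g$ yields the uniform bound $\psi_g\le e^{\beta F(w_g)}$, whence $K$ is a closed subset of the compact product $\prod_{g\in G}[0,e^{\beta F(w_g)}]$. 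The Krein-Milman theorem then presents $K$ as the closed convex hull of its extreme points, and by Theorem~\ref{margulis} each extremal $\psi\in K$ is multiplicative, hence abelian, since $\psi_{hgk}=\psi_h\psi_g\psi_k=\psi_h\psi_k\psi_g=\psi_{hkg}$ for all $h,g,k\in G$. As the abelian vectors form the closed convex set $\Delta\subseteq K$ and $\Delta$ now contains every extreme point of $K$, it follows that $K=\Delta$.

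Consequently the simplex of $\beta$-KMS states for $\alpha^F$ coincides, together with its convex and topological structure, with the simplex of \emph{abelian} $\beta$-KMS states for $\alpha^F$, and since the hypotheses $\operatorname{rank}(G/[G,G])=n\ge1$ and $F(s)>0$ for $s\in Y$ are exactly those of Theorem~\ref{KMSmain}, the three assertions about $\beta<\beta_0$, $\beta=\beta_0$ and $\beta>\beta_0$ are immediate from that theorem. I do not anticipate a genuine difficulty here: all the substantial work --- the strict convexity argument of Lemma~\ref{uniqueu}, the sphere parametrization of Lemma~\ref{what12}, and the location of the critical temperature $\beta_0$ --- is already contained in the proof of Theorem~\ref{KMSmain}, and the only point deserving a line of care is the compactness of $K$ needed to apply the Krein-Milman theorem, which is anyway implicit in the discussion preceding Theorem~\ref{margulis}.
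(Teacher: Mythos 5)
Your proposal is correct and follows exactly the route the paper intends: the corollary is deduced from Theorem~\ref{KMSmain} once one knows that for nilpotent $G$ every $\beta$-KMS state is abelian, which the paper obtains (as you do) from the Krein--Milman theorem applied to the compact convex set of normalized $\beta$-harmonic vectors together with Margulis' theorem that its extreme points are multiplicative. Your explicit verification of compactness via the bound $\psi_g\le e^{\beta F(w_g)}$ is a sound way to justify the step the paper leaves implicit.
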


\section{The abelian $\mathrm{KMS}_{\infty}$ states} 

Following \cite{CM} we say that a state $\omega$ on $O_Y(G)$ is a \emph{$\mathrm{KMS}_{\infty}$ state} when there is a sequence $\{\beta_n\} \subseteq \mathbb R$ and for each $n$ a $\beta_n$-KMS state $\omega_n$ such that $\lim_{n \to \infty} \beta_n = \infty$ and $\lim_{n \to \infty} \omega_n = \omega$ in the weak*-topology. When the $\omega_n$'s can be chosen as abelian $\beta_n$-KMS states we say that $\omega$ is an \emph{abelian $\mathrm{KMS}_{\infty}$ state}. It follows from Proposition \ref{Gabbb} that there are no abelian $\mathrm{KMS}_{\infty}$ states when the abelianization of $G$ is finite. We retain therefore here the assumption that the rank $n$ of $G/[G,G]$ is $\geq 1$. Furthermore, we assume also that $F$ is strictly positive and we denote by $\beta_0$ the least inverse temperature $\beta$ for which there are any abelian $\beta$-KMS states, cf. Theorem \ref{KMSmain}.


Let $\Delta_Y$ denote the simplex
$$
\Delta_Y = \left\{ p \in [0,1]^Y : \ \sum_{s \in Y} p_s = 1 \right\} .
$$
For $\beta > \beta_0$, set
$$
N_{\beta} = \left\{ \left( e^{-\beta F(s)} e^{u \cdot c_s}\right)_{s \in Y} : \ u \in Q(\beta) \right\} \subseteq \Delta_Y .
$$
Let $N_{\infty}$ denote the limit set of $N_{\beta}$ as $\beta \to \infty$; i.e. 
$$
N_{\infty} = \bigcap_{n \geq \beta_0} \  \overline{\bigcup_{\beta \geq n} N_{\beta}} .
$$

\begin{lemma} \label{l12}
Let $p \in N_{\infty}$. For all $\varepsilon >0$ there exists a $\beta_{\epsilon} >0$ such that for each $\beta \geq \beta_{\epsilon}$ there is a $x^{\beta} \in N_{\beta}$ with 
$\left|x^{\beta}_s \ -\ p_s\right|  \leq \varepsilon \  \ \forall s \in Y$.
\end{lemma}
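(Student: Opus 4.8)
The plan is to recast the statement in terms of the map $\Psi\colon\mathbb R^n\to\Delta_Y$ given by $\Psi(u)=\big(e^{-\beta(u)F(s)}e^{u\cdot c_s}\big)_{s\in Y}$, which is well defined by Lemma~\ref{nl1} and satisfies $\Psi(u)\in N_{\beta(u)}$ for all $u$, as well as $N_\beta=\Psi\big(\{u\in\mathbb R^n:\beta(u)=\beta\}\big)$, the latter because $\{u:\beta(u)=\beta\}$ is precisely the set in \eqref{Qbetadef}. Unwinding the definition $N_\infty=\bigcap_{m\ge\beta_0}\overline{\bigcup_{\beta\ge m}N_\beta}$ together with Lemma~\ref{fremog}, one first records the convenient description: $p\in N_\infty$ if and only if there is a sequence $u_k\in\mathbb R^n$ with $\lVert u_k\rVert\to\infty$ and $\Psi(u_k)\to p$. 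This turns the problem into a geometric one about the behaviour of $\Psi$ far out in $\mathbb R^n$.

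Next I would reduce the lemma to a single statement about curves: it suffices to produce a \emph{continuous} $\gamma\colon[0,\infty)\to\mathbb R^n$ with $\lVert\gamma(t)\rVert\to\infty$ and $\Psi(\gamma(t))\to p$. Granting such a $\gamma$, the map $t\mapsto\beta(\gamma(t))$ is continuous by Lemma~\ref{nl1} and tends to $\infty$ by Lemma~\ref{fremog}. Given $\varepsilon>0$, choose $T$ with $\big|\Psi(\gamma(t))_s-p_s\big|\le\varepsilon$ for all $s\in Y$ and all $t\ge T$, and put $\beta_\varepsilon=\beta(\gamma(T))$. The image of the connected set $[T,\infty)$ under $\beta(\gamma(\cdot))$ is an interval that contains $\beta_\varepsilon$ and is unbounded above, hence contains $[\beta_\varepsilon,\infty)$; so for every $\beta\ge\beta_\varepsilon$ there is $t\ge T$ with $\beta(\gamma(t))=\beta$, i.e.\ $\gamma(t)\in Q(\beta)$, and $x^\beta:=\Psi(\gamma(t))\in N_\beta$ is the desired point. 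This reduction is routine.

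The heart of the argument is the construction of $\gamma$, and I expect this to be the main obstacle. For $n=1$ it is easy: $Q(\beta)$ consists of two points which escape to $+\infty$ and $-\infty$ as $\beta\to\infty$, the manipulation from the proof of Lemma~\ref{uniqueu} shows that $\Psi(tv)$ has a limit as $t\to\infty$ for each of the two unit vectors $v$, so $N_\infty$ is a two-point set and $\gamma$ is simply the appropriate ray. For $n\ge2$ one must analyse $\Psi$ at infinity. Along a ray $t\mapsto tv$ with $v\in S^{n-1}$, factoring out the dominant exponential rate $\lambda(v)=\max_{s\in Y}\tfrac{v\cdot c_s}{F(s)}$ ---just as in the proofs of Lemmas~\ref{uniqueu} and \ref{fremog}--- shows that $\Psi(tv)$ converges to a point of $\Delta_Y$ supported on $\big\{s:\tfrac{v\cdot c_s}{F(s)}=\lambda(v)\big\}$, while perturbing the ray by a correction of order $t^{-1}$ realises, in the limit, the remaining points of that face of $\Delta_Y$. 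The construction of $\gamma$ would then proceed by passing to a subsequence of the $u_k$ along which $u_k/\lVert u_k\rVert$ converges and the argmax set above is constant, reading off from the $u_k$ the limiting direction together with its lower-order correction, and taking $\gamma$ to be a smooth curve escaping to infinity with exactly that asymptotic data; continuity of $\Psi$ then forces $\Psi(\gamma(t))\to p$.

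The delicate point ---and the step I expect to require the most care--- is that $\Psi$ does \emph{not} extend continuously to the radial compactification of $\mathbb R^n$: the ray-limit jumps as the direction $v$ crosses a wall $\big\{v:\tfrac{v\cdot c_s}{F(s)}=\tfrac{v\cdot c_{s'}}{F(s')}\big\}$, and it is exactly the points obtained by crossing such walls ``at finite speed'' that make $N_\infty$ larger than the finite set of ray-limits. Keeping track of this requires the finite-dimensional convex geometry of the fan determined by the vectors $\big(c_s,F(s)\big)_{s\in Y}$, and one has to verify that the explicit curve written down converges to the prescribed $p$ rather than to a point lying on a proper subface of the relevant face. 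Once that is settled, the reformulation in the first paragraph and the reduction in the second close the argument.
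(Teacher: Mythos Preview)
Your reduction in the first two paragraphs is correct and is essentially the same manoeuvre the paper uses: one produces a continuous path in $\mathbb R^n$ along which $\Psi$ stays close to $p$, and then the intermediate value theorem applied to $\beta(\cdot)$ along that path yields a point of $N_\beta$ for every sufficiently large $\beta$.

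The gap is in the third and fourth paragraphs. You identify the construction of $\gamma$ as ``the heart of the argument'' and then do not carry it out; instead you sketch an asymptotic analysis of $\Psi$ along rays, with lower-order corrections governed by the fan of the vectors $c_s/F(s)$. That programme is essentially the content of the much harder Section~6 of the paper (the proof that $N_\infty\simeq S^{n-1}$), and it is unnecessary here. The paper's proof of this lemma is completely elementary: the curve one needs is simply the \emph{piecewise linear interpolation} of the given sequence $u_k$. The only estimate required is that on each segment $[u_n,u_{n+1}]$ the coordinates of $\Psi$ remain close to $p$. For this, convexity of the exponential gives
\[
e^{-\beta((1-\lambda)u_n+\lambda u_{n+1})F(s)}e^{((1-\lambda)u_n+\lambda u_{n+1})\cdot c_s}\ \ge\ \min\bigl\{\Psi(u_n)_s,\Psi(u_{n+1})_s\bigr\}\ \ge\ p_s-\delta
\]
for every $s$ (using $\beta((1-\lambda)u_n+\lambda u_{n+1})\le(1-\lambda)\beta_n+\lambda\beta_{n+1}$), and then the constraint $\sum_s\Psi(\cdot)_s=1$ automatically upgrades this one-sided bound to a two-sided one with the loss of a factor~$\#Y$. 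No information about the fan, the argmax sets, or the wall-crossing behaviour is needed.

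So the missing idea is not a piece of convex geometry at infinity but the much more modest observation that the sequence $(u_k)$ \emph{already handed to you} by the hypothesis $p\in N_\infty$ can be joined by straight segments, and that convexity of $\exp$ together with the simplex constraint controls $\Psi$ along those segments.
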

\begin{proof} By assumption there are sequences $\{u_{n}\}$ and $\{\beta_n\}$ with $u_n \in Q(\beta_{n})$ such that $\beta_{n} \to \infty$ and $\lim_{n \to \infty} e^{-\beta_{n} F(s)}e^{u_{n} \cdot c_s}  =  p_s$ for all $s \in Y$. By choosing a subsequence, if necessary, we can assume that $\beta_{n} < \beta_{n+1}$ for all $n$. Set $\delta =\varepsilon / \# Y$ and choose $N \in \mathbb{N}$ such that
\begin{equation}  \label{e1}
\lvert e^{-\beta_{n}F(s)}e^{u_{n} \cdot c_s} - p_{s} \rvert \leq \delta \qquad \qquad \forall s \in {Y}
\end{equation}
when $n \geq N$. We claim that $\beta_{\epsilon} = \beta_N$ will do the job, so assume that $\beta \geq \beta_N$. There is an $n \geq N$ such that $\beta \in \left[\beta_n,\beta_{n+1}\right]$. Since the function $
[0,1] \ni \lambda \to \beta((1-\lambda) u_{n} +\lambda u_{n+1})$
is continuous and $\beta(u_n) = \beta_n, \ \beta(u_{n+1}) = \beta_{n+1}$  by Lemma \ref{nl1}, it follows that there is a $\lambda \in [0,1]$ such that $\beta = \beta \left( (1-\lambda)u_n + \lambda u_{n+1}\right)$. By convexity of the exponential function we have that
\begin{align*}
& \sum_{s \in Y} e^{-\left[(1-\lambda) \beta_n + \lambda \beta_{n+1}\right]} e^{ \left[(1-\lambda) u_n + \lambda u_{n+1}\right] \cdot c_s}  \\
& \leq (1 -\lambda)  \sum_{s \in Y} e^{- \beta_n} e^{u_n  \cdot c_s} + \lambda  \sum_{s \in Y} e^{- \beta_{n+1}} e^{u_{n+1}  \cdot c_s} = 1.
\end{align*}
It follows that
$$
\beta = \beta \left((1-\lambda)u_n + \lambda u_{n+1}\right) \leq (1-\lambda) \beta_n + \lambda \beta_{n+1} ,
$$  
and hence
\begin{equation*}
\begin{split}
& -\beta F(s) + \left((1-\lambda)u_n + \lambda u_{n+1}\right) \cdot c_s \\
& \geq  \left(1-\lambda\right)\left( - \beta_n F(s) + u_n \cdot c_s\right) +  \lambda \left( - \beta_{n+1} F(s) + u_{n+1} \cdot c_s\right)  \\
& \geq \min \left\{  - \beta_n F(s) + u_n \cdot c_s, \  - \beta_{n+1} F(s) + u_{n+1} \cdot c_s\right\} .
\end{split}
\end{equation*}
Therefore (\ref{e1}) implies that
$$
e^{-\beta F(s)}e^{((1-\lambda) u_{n} + \lambda u_{n+1})\cdot c_s} \ \geq \ p_s - \delta
$$
for all $s \in Y$. On the other hand, if there was a $\tilde{s} \in {Y}$ such that
$$
e^{-\beta F(\tilde{s})}e^{((1-\lambda) u_{n} + \lambda u_{n+1})\cdot c_{\tilde{s}}}  > p_{\tilde{s}}+\delta \left(\# Y\right),
$$
it would follow that
$$
1 = \sum_{s \in {Y}} e^{-\beta F(s)} e^{((1-\lambda) u_{n} + \lambda u_{n+1}) \cdot c_s} > p_{\tilde{s}}+\delta \left(\# Y\right) +\sum_{s \in {Y}\setminus\{\tilde{s}\}}(p_{s}-\delta) \geq \sum_{s \in {Y}}p_{s} = 1,
$$
which is absurd. Hence, for all $s \in {Y}$,
$$
p_{s}-\delta \leq e^{-\beta F(s)}e^{((1-\lambda) u_{n} + \lambda u_{n+1})\cdot c_s}  \leq p_{s}+\delta (\# Y) .
$$
Thus 
$$
x^{\beta} = \left( e^{-\beta F(s)}e^{((1-\lambda) u_{n} + \lambda u_{n+1})\cdot c_s}\right)_{s \in Y}
$$
 is an element of $N_{\beta}$ such that $\left|p_s - x^{\beta}_s\right| \leq \epsilon$ for all $s \in Y$.
\end{proof}

\begin{prop}\label{infty} Assume that $G/[G,G]$ is not finite and assume that $F$ is strictly positive. The abelian $\mathrm{KMS}_{\infty}$ states constitute a compact convex set affinely homeomorphic to the simplex of Borel probability measures on $N_{\infty}$. The abelian $\mathrm{KMS}_{\infty}$ state $\omega$ on $O_Y(G)$ corresponding to a Borel probability measure $\nu$ on $N_{\infty}$ is given by
\begin{equation}\label{infty2}
\omega(a) = \int_{N_{\infty}} \int_{Y^{\mathbb N}} E(a) \ \mathrm{d} n_p  \ \mathrm{d} \nu(p)
\end{equation}
for all $a \in O_Y(G)$, where $n_p$ is the Bernoulli measure defined by $p \in N_{\infty}$.  
\end{prop}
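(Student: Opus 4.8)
The plan is to realise the abelian $\mathrm{KMS}_{\infty}$ states as the image of a single affine map. Define $\Phi$ on the Borel probability measures $\nu$ on $N_{\infty}$ by letting $\Phi(\nu)$ be the functional on $O_Y(G)$ given by the right-hand side of \eqref{infty2}; since the conditional expectation $E$ is positive and $n_p$ and $\nu$ are probability measures, $\Phi(\nu)$ is a state. For a spanning element $a = V_tV_u^*$ one has $\int_{Y^{\mathbb N}} E(a)\,\mathrm{d}n_p = \prod_i p_{t_i}$ when $t=u$ and $0$ otherwise, so $p \mapsto \int_{Y^{\mathbb N}} E(a)\,\mathrm{d}n_p$ is continuous on the compact simplex $\Delta_Y$ for such $a$, and hence for every $a \in O_Y(G)$ by density and the bound $\norm{E(a)} \le \norm{a}$. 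Thus $\Phi$ is weak*-continuous, it is affine by inspection, and it is injective: $\Phi(\nu) = \Phi(\nu')$ forces $\int_{N_{\infty}} \prod_i p_{t_i}\,\mathrm{d}\nu = \int_{N_{\infty}} \prod_i p_{t_i}\,\mathrm{d}\nu'$ for all finite tuples $t$ over $Y$, and the functions $p \mapsto \prod_i p_{t_i}$ generate a unital, point-separating subalgebra of $C(N_{\infty})$, so $\nu = \nu'$ by Stone--Weierstrass, exactly as in the proof of Theorem~\ref{Q-descr}. Since $N_{\infty}$ is closed in the compact metric space $\Delta_Y$, the space of Borel probability measures on $N_{\infty}$ is compact and metrizable, so $\Phi$ is an affine homeomorphism onto a compact convex subset of the state space of $O_Y(G)$. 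It remains to show that this image is exactly the set $K$ of abelian $\mathrm{KMS}_{\infty}$ states.

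For the inclusion $\Phi(\nu) \in K$, I would first treat a finitely supported $\nu = \sum_{j=1}^k \lambda_j \delta_{p_j}$ with $p_j \in N_{\infty}$. Applying Lemma~\ref{l12} to each $p_j$ and keeping the largest of the resulting thresholds, for every $\varepsilon > 0$ there is a $\beta_{\varepsilon}$ so that for all $\beta \ge \beta_{\varepsilon}$ and all $j$ there is a vector $x^{\beta}_j \in N_{\beta}$ with $\abs{(x^{\beta}_j)_s - (p_j)_s} \le \varepsilon$ for all $s \in Y$. Each $x^{\beta}_j$ is the probability vector of an extremal abelian $\beta$-KMS measure (Lemmas~\ref{bernoulli} and \ref{bernoulli2}), so $\sum_j \lambda_j$ times the corresponding extremal abelian $\beta$-KMS states is again an abelian $\beta$-KMS state by Theorem~\ref{Q-descr}. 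Letting $\beta = \beta_m \to \infty$ with a matching $\varepsilon_m \to 0$ and using that $\Delta_Y \ni p \mapsto n_p$ is weak*-continuous, these states converge weak* to $\Phi(\nu)$, so $\Phi(\nu) \in K$. A general Borel probability measure $\nu$ on $N_{\infty}$ is a weak* limit of finitely supported ones, and $\Phi$ is continuous, so it is enough to check that $K$ is closed; this follows from a diagonal argument, using that the state space of the separable algebra $O_Y(G)$ is weak*-metrizable, which converts a convergent sequence of abelian $\mathrm{KMS}_{\infty}$ states into a single sequence of abelian KMS states at inverse temperatures tending to $\infty$ with the same limit.

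For the reverse inclusion $K \subseteq \im \Phi$, let $\omega = \lim_n \omega_n$ with $\omega_n$ an abelian $\beta_n$-KMS state and $\beta_n \to \infty$; we may assume all $\beta_n > \beta_0$. By Theorem~\ref{Q-descr} together with the homeomorphism $Q(\beta_n) \cong N_{\beta_n}$, there are Borel probability measures $\mu_n$ on $N_{\beta_n} \subseteq \Delta_Y$ with $\omega_n(a) = \int_{\Delta_Y} \bigl( \int_{Y^{\mathbb N}} E(a)\,\mathrm{d}n_p \bigr)\,\mathrm{d}\mu_n(p)$. Since the Borel probability measures on the compact metric space $\Delta_Y$ form a weak*-compact metrizable set, we may pass to a subsequence along which $\mu_n \to \mu$ weak*. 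For each $m \ge \beta_0$ the set $\overline{\bigcup_{\beta \ge m} N_{\beta}}$ is closed and carries $\mu_n$ for all large $n$, so by the portmanteau theorem it carries $\mu$; intersecting over $m$ gives $\mu(N_{\infty}) = 1$. Since $p \mapsto \int_{Y^{\mathbb N}} E(a)\,\mathrm{d}n_p$ is continuous on $\Delta_Y$, the weak* convergence $\mu_n \to \mu$ gives $\omega(a) = \int_{N_{\infty}} \int_{Y^{\mathbb N}} E(a)\,\mathrm{d}n_p\,\mathrm{d}\mu(p) = \Phi(\mu)(a)$, so $\omega \in \im \Phi$ and \eqref{infty2} holds with $\nu = \mu$.

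I expect the first inclusion to be the main obstacle. The real work already sits in Lemma~\ref{l12} --- the uniform-in-$\beta$ interpolation on the family $\{N_{\beta}\}$ --- and the task here is to turn that pointwise statement into one about the convex set $K$: the approximation must be carried out at a single fixed inverse temperature $\beta$, where the abelian $\beta$-KMS states are known to form a convex set, and only afterwards may one let $\beta \to \infty$; moreover one must separately verify that $K$ is closed, convexity of $K$ being apparent only a posteriori from the identification with a Bauer simplex. The reverse inclusion is comparatively soft, the one delicate point being the localisation of the limiting measure $\mu$ on $N_{\infty}$ via the portmanteau theorem.
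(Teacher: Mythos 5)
Your proposal is correct and follows essentially the same route as the paper: the same formula \eqref{infty2} defines the map, injectivity is the Stone--Weierstrass argument of Theorem~\ref{Q-descr}, surjectivity onto the abelian $\mathrm{KMS}_{\infty}$ states is obtained by extracting a weak* limit of the representing measures and localising it on $N_{\infty}$, and the inclusion of the image in the abelian $\mathrm{KMS}_{\infty}$ states is proved first for convex combinations of Dirac measures via Lemma~\ref{l12} and then by density together with closedness of that set. Your write-up merely makes explicit two points the paper leaves implicit (the portmanteau step giving $\mu(N_{\infty})=1$ and the diagonal/metrizability argument showing the abelian $\mathrm{KMS}_{\infty}$ states form a weak*-closed set), and both are handled correctly.
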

\begin{proof} Let $\beta > \beta_0$. Since the map $u \mapsto \left( e^{-\beta F(s)} e^{u \cdot c_s}\right)_{s \in Y}$ is a homeomorphism from $Q(\beta)$ onto $N_{\beta}$ it follows from Theorem \ref{Q-descr} that every abelian $\beta$-KMS state $\omega_{\beta}$ is given by a Borel probability measure $\nu$ on $N_{\beta}$ such that
\begin{equation*}\label{limit}
\omega_{\beta}(a)  =  \int_{N_{\beta}}\int_{Y^{\mathbb N}} E(a) \ \mathrm{d} n_p  \ \mathrm{d} \nu(p)
\end{equation*}
for all $a \in O_Y(G)$. Let $\{\omega_n\}$ be a sequence of $\beta_n$-KMS states such that $\lim_{n \to \infty} \beta_n = \infty$ and $\lim_{n \to \infty} \omega_n = \omega$ in the weak* topology.  Let $\nu_n$ be the Borel probability measure on $N_{\beta_n}$ corresponding to $\omega_n$. Extend $\nu_n$ to a Borel probability measure $\tilde{\nu_n}$ on $\Delta_Y$ such that
$$
\tilde{\nu_n}(B) = \nu_n\left(B \cap N_{\beta_n}\right)
$$
and let $\nu$ be a weak* condensation point of $\{\tilde{\nu_n}\}$ in the set of Borel probability measures on $\Delta_Y$. Then $\nu$ is concentrated on $N_{\infty}$ and \eqref{infty2} holds. This shows that the map from Borel probability measures on $N_{\infty}$ to states on $O_Y(G)$ given by \eqref{infty2} hits every abelian $\mathrm{KMS}_{\infty}$ state. The proof that the map is injective is identical with the proof of injectivity in Theorem \ref{Q-descr}.


It remains to show that for an arbitrary Borel probability measure $\nu$ on $N_{\infty}$ the state $\omega$ on $O_Y(G)$ defined by \eqref{infty2} is a $\mathrm{KMS}_{\infty}$ state. Since the set of $\mathrm{KMS}_{\infty}$ states is closed for the weak* topology it suffices to show this for a weak* dense subset of Borel probability measures on $N_{\infty}$, e.g. for the set of convex combinations of Dirac measures. For this set the claim follows straightforwardly from Lemma \ref{l12}.
\end{proof}

It remains to prove the following

\begin{thm}\label{Sn-1} The set $N_{\infty} \subseteq \Delta_Y$ is  homeomorphic to the $(n-1)$-sphere $S^{n-1}$.
\end{thm}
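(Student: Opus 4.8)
The plan is to identify $N_\infty$, up to homeomorphism, with the boundary of the polar dual of the polytope
$$
\mathcal P \;=\; \bigl\{u\in\mathbb R^n:\ u\cdot c_s\le F(s)\ \text{for all}\ s\in Y\bigr\}.
$$
Since $F(s)>0$ for all $s$, the origin lies in the interior of $\mathcal P$, and $\mathcal P$ is bounded because, by the argument in the proof of Lemma \ref{uniqueu}, for every unit vector $v$ there is an $s\in Y$ with $v\cdot c_s>0$; equivalently $0\in\operatorname{int}\operatorname{conv}\{c_s:s\in Y\}$. Thus $\mathcal P$ is a genuine $n$-dimensional polytope with $0$ in its interior, and so is its polar dual $\mathcal P^{\circ}=\operatorname{conv}\{F(s)^{-1}c_s:s\in Y\}$; in particular $\partial\mathcal P^{\circ}$ is homeomorphic to $S^{n-1}$. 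I write $W_A=\{(u\cdot c_s)_{s\in A}:u\in\mathbb R^n\}\subseteq\mathbb R^A$ for $A\subseteq Y$.

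First I would carry out the asymptotic analysis relating a point of $N_\infty$ to a face of $\mathcal P$. Let $p\in N_\infty$, and pick $\beta_m\to\infty$ and $u_m\in\mathbb R^n$ with $\sum_{s}e^{u_m\cdot c_s-\beta_m F(s)}=1$ and $e^{u_m\cdot c_s-\beta_m F(s)}\to p_s$ for all $s\in Y$. Since each summand is $\le 1$, the quantity $\max_s\bigl(u_m\cdot c_s-\beta_m F(s)\bigr)$ stays bounded, which together with the displayed property of the $c_s$ forces $u_m/\beta_m$ to remain bounded; passing to a subsequence, $u_m/\beta_m\to\tilde u$ with necessarily $\tilde u\in\partial\mathcal P$. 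With $A=A(\tilde u)=\{s:\tilde u\cdot c_s=F(s)\}$ the active set and $v_m=u_m-\beta_m\tilde u$, one gets $p_s=0$ for $s\notin A$, $e^{v_m\cdot c_s}\to p_s$ for $s\in A$, and $\sum_{s\in A}p_s=1$. Iterating this reduction along the face lattice of $\mathcal P$ (applied now on the generator set $A$ and the vectors $(c_s)_{s\in A}$), and using that $F>0$ together with $F|_{A_G}\in W_{A_G}$ for the active set $A_G$ of a face $G$ — which forbids $0\in\operatorname{conv}\{c_s:s\in A_G\}$ — I expect to reach the description: $p\in N_\infty$ if and only if $\operatorname{supp}(p)$ equals the active set $A_G$ of some proper face $G$ of $\mathcal P$ and $\log p|_{A_G}\in W_{A_G}$. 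For the converse implication one must produce the approximating sequences; here Lemmas \ref{nl1} and \ref{fremog} control the relation between $\|u_m\|$ and $\beta(u_m)$, and a convexity-and-continuity argument in the spirit of Lemma \ref{l12} lets one hit prescribed inverse temperatures.

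Granting this description, $N_\infty$ is the disjoint union of the sets $N_\infty^G=\{p\in\Delta_Y:\operatorname{supp}(p)=A_G,\ \log p|_{A_G}\in W_{A_G}\}$, one for each proper face $G$ of $\mathcal P$. The convex-geometry facts above show that, writing $r=\operatorname{rank}\{c_s:s\in A_G\}$, the condition $\sum_{s\in A_G}e^{u\cdot c_s}=1$ cuts out the boundary of a strictly convex unbounded body in $\mathbb R^{r}$, so $N_\infty^G$ is an open cell of dimension $r-1=(n-1)-\dim G$, with $\overline{N_\infty^G}=\bigsqcup_{G'\supseteq G}N_\infty^{G'}$. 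Hence $G\mapsto\overline{N_\infty^G}$ is an order-reversing bijection from the face lattice of $\mathcal P$ onto a regular cell decomposition of $N_\infty$ whose face poset coincides with that of $\partial\mathcal P^{\circ}$. Since a regular CW complex is determined up to homeomorphism by its face poset, $N_\infty\cong\partial\mathcal P^{\circ}\cong S^{n-1}$. Alternatively, one makes the homeomorphism explicit: the affine map $p\mapsto\sum_{s}p_s F(s)^{-1}c_s$ is continuous on $\Delta_Y$, carries each $N_\infty^G$ into (in fact onto the relative interior of) the dual face $G^{\diamond}\subseteq\partial\mathcal P^{\circ}$, and is injective on $N_\infty^G$ because on $\{u:\sum_{s\in A_G}e^{u\cdot c_s}=1\}$ it agrees with the gradient of the strictly convex function $u\mapsto\sum_{s\in A_G}e^{u\cdot c_s}F(s)^{-1}$; being a continuous injection of the compact set $N_\infty$ onto $\partial\mathcal P^{\circ}$ it is a homeomorphism.

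The main obstacle is the second step: pinning down $N_\infty$ precisely, especially the converse inclusion and the bookkeeping of the iterated reduction along the face lattice; this is where the bulk of the finite-dimensional convex geometry enters. A secondary technical point is the passage from the face-wise picture to a genuine global homeomorphism (surjectivity of the explicit map on each face, or regularity of the cell structure), which is delicate because the naive assignment $v\mapsto\lim_{\beta\to\infty}\Psi_\beta(v)$ — with $\Psi_\beta\colon S^{n-1}\to N_\beta$ the homeomorphism furnished by Lemma \ref{what12} — is discontinuous across the lower-dimensional faces, so the collapsing alluded to in the introduction is real at the level of this parametrization even though $N_\infty$ itself does not collapse.
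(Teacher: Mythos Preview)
Your approach is genuinely different from the paper's and is, at the level of outline, a valid strategy. Let me compare the two and then point to the places where your argument is incomplete.

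\textbf{Comparison with the paper.} The paper builds a map in the opposite direction: it defines $H:S^{n-1}\to N_\infty$ recursively on the skeleta of the fan $\{M(Z)\}$ (which is precisely the normal fan of your polytope $\mathcal P^\circ$), choosing a ``center'' in each cone and interpolating toward the boundary via an explicit formula for the ratios $t_s^{1/F(s)}/t_z^{1/F(z)}$. Continuity, injectivity and surjectivity of $H$ are then proved by induction on the dimension of the cones. Your route instead aims at an \emph{intrinsic} description of $N_\infty$ (supports equal active sets $A_G$, with $\log p|_{A_G}\in W_{A_G}$) and then maps $N_\infty$ \emph{to} $\partial\mathcal P^\circ$ via the moment map $p\mapsto\sum_s p_s\,c_s/F(s)$. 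If completed, your proof is more conceptual and yields an explicit, globally defined homeomorphism, whereas the paper's map depends on the arbitrary choice of centers $c(M(Z))$. Both approaches rest on the same face-lattice bookkeeping.

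\textbf{Where your argument is incomplete.} The ``iterated reduction'' can indeed be made rigorous. After passing to $v_m=u_m-\beta_m\tilde u$ one has $\sum_{s\in A}e^{v_m\cdot c_s}\to 1$; if some $p_s=0$ with $s\in A$ then $\|v_m\|\to\infty$, and any subsequential limit $\tilde v$ of $v_m/\|v_m\|$ satisfies $\tilde v\cdot c_s\le 0$ for $s\in A$ with equality exactly on a subset $A'$. The key check you omit is that $A'$ is again an active set: taking $\tilde u+\varepsilon\tilde v\in\mathcal P$ for small $\varepsilon$ shows $A'=A_{G_1}$ for a face $G_1\supsetneq G_0$. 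The converse inclusion is easier: given $p$ with $\operatorname{supp}(p)=A_G$ and $\log p_s=u_0\cdot c_s$ on $A_G$, set $u_m=u_0+m\tilde u$ with $\tilde u\in\operatorname{relint}G$ and use Lemma~\ref{nl1}; then $\beta(u_m)-m\to 0$ and one checks directly that the associated vector in $\Delta_Y$ converges to $p$. So your characterization of $N_\infty$ is correct, but neither direction is actually written down.

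The more serious gap is surjectivity of the moment map. You assert parenthetically that $N_\infty^G$ maps \emph{onto} $\operatorname{relint}(G^\diamond)$ and then conclude ``being a continuous injection of the compact set $N_\infty$ onto $\partial\mathcal P^\circ$ it is a homeomorphism'' --- but ``onto'' is precisely what must be shown, and your gradient argument gives only injectivity. One way to close this is by induction on $\dim G^\diamond$: since $0\notin\operatorname{conv}\{c_s:s\in A_G\}$, the sublevel set $\{u:\sum_{s\in A_G}e^{u\cdot c_s}\le 1\}$ is an unbounded line-free convex body, so $N_\infty^G$ is a genuine $(r-1)$-manifold; invariance of domain then makes $\Psi(N_\infty^G)$ open in $\operatorname{relint}(G^\diamond)$, while the inductive hypothesis on $\partial G^\diamond$ and compactness of $\overline{N_\infty^G}$ make it closed, hence all of the (connected) relative interior. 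The CW-poset alternative you mention has the same hidden cost: showing the cell structure is \emph{regular} amounts to showing each $\overline{N_\infty^G}$ is a closed ball, which is exactly this inductive surjectivity statement. None of this is in your write-up.
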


The proof of Theorem \ref{Sn-1} will occupy the next section, but we record here the following corollaries.

\begin{cor}\label{corMan} Assume that the abelianization $G/[G,G]$ has rank $n \geq 1$ and that $F(s) > 0$ for all $s \in Y$. The set of abelian $\mathrm{KMS}_{\infty}$ states for $\alpha^F$ on $O_Y(G)$ is a compact convex set affinely homeomorphic to the set of Borel probability  measures on the $(n-1)$-sphere.
\end{cor}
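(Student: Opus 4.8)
To prove Theorem~\ref{Sn-1} --- from which Corollary~\ref{corMan} follows at once by combining it with Proposition~\ref{infty} --- the plan is to realise $N_\infty$ as a regular cell complex combinatorially dual to the boundary of a convex polytope, and then to use that the homeomorphism type of such a complex is determined by its face poset, which here is that of a sphere.

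\emph{Step 1: convex geometry.} Introduce the polytope
$$
P \ = \ \bigl\{\, w \in \mathbb{R}^{n} \ :\ w \cdot c_s \le F(s) \ \text{ for all } s \in Y \,\bigr\}.
$$
Since $F(s)>0$ for all $s$, the origin lies in the interior of $P$; and since for every unit vector $v$ there is an $s\in Y$ with $v\cdot c_s>0$ (shown in the proof of Lemma~\ref{uniqueu}), $P$ is bounded. Hence $P$ is a compact convex polytope with $0$ in its interior, and its polar dual $P^{\vee}=\operatorname{co}\{\,c_s/F(s):s\in Y\,\}$ is likewise a compact convex polytope with $0$ in its interior, so $\partial P^{\vee}\cong S^{n-1}$. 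The face lattices of $P$ and $P^{\vee}$ are anti-isomorphic: a nonempty proper face $\mathcal F$ of $P$, with active set $A(\mathcal F)=\{\,s\in Y:w\cdot c_s=F(s)\text{ for }w\in\operatorname{relint}\mathcal F\,\}$, corresponds to the face $\mathcal F^{\vee}=\operatorname{co}\{\,c_s/F(s):s\in A(\mathcal F)\,\}$ of $P^{\vee}$, with $\dim\mathcal F^{\vee}=n-1-\dim\mathcal F=\operatorname{rank}\{\,c_s:s\in A(\mathcal F)\,\}-1$. Moreover, for $w\in\operatorname{relint}\mathcal F$ all the $c_s$ with $s\in A(\mathcal F)$ satisfy $w\cdot c_s=F(s)>0$, so $0\notin\operatorname{co}\{\,c_s:s\in A(\mathcal F)\,\}$; strict convexity of $\zeta\mapsto\sum_{s\in A(\mathcal F)}e^{\zeta\cdot c_s}$ together with this fact then shows that the set
$$
Q_{\mathcal F}\ =\ \Bigl\{\,\bigl(e^{\zeta\cdot c_s}\bigr)_{s\in A(\mathcal F)}\ :\ \zeta\in\mathbb{R}^{n},\ \textstyle\sum_{s\in A(\mathcal F)}e^{\zeta\cdot c_s}=1\,\Bigr\}
$$
is nonempty and homeomorphic to $\mathbb{R}^{\dim\mathcal F^{\vee}}$ (its defining level set is a nonvanishing-gradient hypersurface in the $\operatorname{rank}\{\,c_s:s\in A(\mathcal F)\,\}$-dimensional image of $\zeta\mapsto(\zeta\cdot c_s)_s$).

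\emph{Step 2: identification of $N_\infty$.} The heart of the argument is to prove
$$
N_\infty\ =\ \bigsqcup_{\mathcal F}\, Q_{\mathcal F},
$$
the disjoint union over all nonempty proper faces $\mathcal F$ of $P$, where $Q_{\mathcal F}\subseteq\Delta_Y$ by setting the coordinates outside $A(\mathcal F)$ equal to $0$. Writing a point of $N_\beta$ coming from $u\in Q(\beta)$ as $\bigl(\exp(\beta(w\cdot c_s-F(s)))\bigr)_s$ with $w=u/\beta$, the constraint $\sum_s\exp(\beta(w\cdot c_s-F(s)))=1$ and $\beta\to\infty$ force the rescaled vectors $w$ to accumulate on $\partial P$. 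For the inclusion $\supseteq$ (the easy one): given $\mathcal F$ and $\zeta$ as in Step~1, pick $w_0\in\operatorname{relint}\mathcal F$; then $u=\beta w_0+\zeta$ satisfies $\sum_s e^{u\cdot c_s-\beta F(s)}\to1$, so after the small correction $\beta\mapsto\beta(u)$ of Lemma~\ref{nl1} one obtains genuine points of $N_{\beta(u)}$, with $\beta(u)\to\infty$, converging to the prescribed point of $Q_{\mathcal F}$ (the convexity-of-$\exp$ estimates are of the kind used in Lemma~\ref{l12}). For the inclusion $\subseteq$: from $w_n=u_n/\beta_n\to w_0\in\partial P$ the limit of $\bigl(\exp(\beta_n(w_n\cdot c_s-F(s)))\bigr)_s$ vanishes in every coordinate $s$ with $w_0\cdot c_s<F(s)$; on the remaining indices one analyses the vectors $\delta_n=\beta_n(w_n-w_0)$ projected onto $\operatorname{span}\{c_s:s\text{ active at }w_0\}$: if these stay bounded they subconverge to a $\zeta$ realising $p\in Q_{\mathcal F}$ for the face $\mathcal F$ active at $w_0$; otherwise they escape in a direction $\mu$ with $\mu\cdot c_s\le0$ on that active set, forcing the effective support to contract to the active set $\{\,s:\mu\cdot c_s=0\,\}$ of a strictly larger face, and one recurses. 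Since active sets strictly decrease, the recursion stops after at most $n$ steps, always in some $Q_{\mathcal F}$. This peeling argument, together with the estimates (in the spirit of Lemma~\ref{fremog}) needed to rule out the various degenerate escapes to infinity, is the long part of the proof.

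\emph{Step 3: gluing.} One verifies, once more from convexity of the exponential sums involved, that $\overline{Q_{\mathcal F}}=\bigsqcup_{\mathcal F'\supseteq\mathcal F}Q_{\mathcal F'}$, that these closure relations reproduce the face poset of $\partial P^{\vee}$, and that each $\overline{Q_{\mathcal F}}$ is a closed topological ball attached along its boundary to the subcomplex $\bigsqcup_{\mathcal F'\supsetneq\mathcal F}Q_{\mathcal F'}$ in the expected way. Hence $N_\infty$ is a regular CW complex whose face poset is isomorphic to that of the boundary complex of $P^{\vee}$; since the homeomorphism type of a regular CW complex is determined by its face poset, $N_\infty\cong\partial P^{\vee}\cong S^{n-1}$, which proves Theorem~\ref{Sn-1}. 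I expect the genuine obstacle to be exactly the non-collapse phenomenon stressed in the introduction: one must show that the top cells $Q_{\mathcal F}$ --- those dual to the vertices of $P$ --- really have dimension $n-1$ and that no stratum is lost or pinched as $\beta\to\infty$, i.e.\ that, in contrast with the collapse at $\beta_0$, nothing collapses at infinity; this is precisely what necessitates the careful dissection of how sequences in $N_\beta$ can degenerate.
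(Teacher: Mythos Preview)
Your reduction of the corollary to Theorem~\ref{Sn-1} via Proposition~\ref{infty} matches the paper exactly. For Theorem~\ref{Sn-1} itself, however, your route is genuinely different. The paper decomposes the \emph{domain} $S^{n-1}$ by the cones $M(Z)$---in your language, the normal fan of $P^{\vee}$---and builds an explicit continuous bijection $H:S^{n-1}\to N_\infty$ by induction on the skeleta of that fan, defining $H$ first on cone centres via Lemma~\ref{l401} and then interpolating radially via Lemma~\ref{kdef1403}, finally verifying continuity, injectivity and surjectivity by hand (Lemmas~\ref{contk}, \ref{contk2}, \ref{sur1}, \ref{sur2}). You instead decompose the \emph{target} $N_\infty$ into cells $Q_{\mathcal F}$ indexed by faces of $P$ and invoke the theorem that a regular CW complex is determined up to homeomorphism by its face poset. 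These two stratifications of $N_\infty$ are not even the same: the paper's cells $H(\mathrm{Int}\,M(Z)\cap S^{n-1})$ are cut out by which $p_s^{1/F(s)}$ achieve the maximum, whereas your $Q_{\mathcal F}$ are cut out by which $p_s$ vanish. Your approach makes the polytope duality transparent and is structurally cleaner; the paper's delivers an explicit homeomorphism but hides the combinatorics.

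The part you underweight is Step~3. That each $\overline{Q_{\mathcal F}}$ is a closed ball with boundary a sphere---i.e.\ that the CW structure is \emph{regular}---is not a one-line convexity check. Your identification $Q_{\mathcal F}\cong\mathbb R^{\dim\mathcal F^{\vee}}$ is fine (the fact that $0\notin\operatorname{co}\{c_s:s\in A(\mathcal F)\}$ supplies a direction $v$ along which the exponential sum is strictly monotone, so the level set is a graph over $v^\perp$), but the closure is taken in $\Delta_Y$ after exponentiation, and one must show that distinct ends of $Q_{\mathcal F}$ do not identify in the limit and that the frontier is itself a sphere of the right dimension. Carrying this out requires essentially the same degeneration analysis you locate in Step~2, now applied to sequences inside a fixed $Q_{\mathcal F}$ escaping to its boundary; the hard work is conserved, not avoided. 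In short: your outline is sound and the approach should go through, but the genuine labour hides in Step~3 at least as much as in the $\subseteq$ direction of Step~2.
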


\begin{cor}\label{Main3} Let $G$ be a nilpotent group whose abelianization $G/[G,G]$ has rank $n \geq 1$ and assume that $F(s) > 0$ for all $s \in Y$. The set of $\mathrm{KMS}_{\infty}$ states for $\alpha^F$ on $O_Y(G)$ is a compact convex set affinely homeomorphic to the set of Borel probability measures on the $(n-1)$-sphere.
\end{cor}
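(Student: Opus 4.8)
The plan is to realise $N_{\infty}$ as a regular CW complex combinatorially dual to the boundary of a convex polytope, and hence as a sphere. The polytope is
$$
P \ = \ \bigl\{\, w \in \mathbb R^{n} : \ \langle w,c_{s}\rangle \le F(s) \ \text{ for all } s \in Y \,\bigr\},
$$
where $\langle\cdot,\cdot\rangle$ is the inner product on $\mathbb R^{n}$. Since $F(s)>0$ for all $s$ we have $0\in\operatorname{int}P$, and since for every unit vector $v$ there is an $s\in Y$ with $\langle v,c_{s}\rangle>0$ (shown in the proof of Lemma~\ref{uniqueu}) the polytope $P$ is bounded; thus $P$ is an $n$-dimensional polytope with the origin in its interior, and the same holds for its polar dual $P^{\circ}$, so $\partial P\cong\partial P^{\circ}\cong S^{n-1}$. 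I will prove that $N_{\infty}$ is homeomorphic to $\partial P^{\circ}$.

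The bridge between the sets $N_{\beta}$ and $P$ is the substitution $u=\beta w$. For $u\in Q(\beta)$ the vector $w=u/\beta$ satisfies $\sum_{s\in Y}e^{\beta(\langle w,c_{s}\rangle-F(s))}=1$, which pins $w$ to within $O(\beta^{-1})$ of $\partial P$, while the corresponding point of $N_{\beta}$ equals $\bigl(e^{\beta(\langle w,c_{s}\rangle-F(s))}\bigr)_{s\in Y}$. When such a $w$ is close to the relative interior of a face $\mathcal F$ of $P$, write $w=w_{\mathcal F}+\beta^{-1}z$ with $w_{\mathcal F}$ in that relative interior, and set $I_{\mathcal F}=\{\,s\in Y:\langle w_{\mathcal F},c_{s}\rangle=F(s)\,\}$ for the active index set; then the coordinates with $s\notin I_{\mathcal F}$ are exponentially small, the coordinates with $s\in I_{\mathcal F}$ become $e^{\langle z,c_{s}\rangle}$, and the normalisation degenerates to $\sum_{s\in I_{\mathcal F}}e^{\langle z,c_{s}\rangle}=1$. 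Writing $\Delta_{I}$ for the simplex of probability vectors on a subset $I\subseteq Y$, I therefore expect to prove
$$
N_{\infty}\ =\ \bigsqcup_{\mathcal F}\,N_{\infty}^{\mathcal F},\qquad N_{\infty}^{\mathcal F}\ :=\ \overline{\Bigl\{\,\bigl(e^{\langle z,c_{s}\rangle}\bigr)_{s\in I_{\mathcal F}}:z\in\mathbb R^{n},\ \textstyle\sum_{s\in I_{\mathcal F}}e^{\langle z,c_{s}\rangle}=1\,\Bigr\}}\ \subseteq\ \Delta_{I_{\mathcal F}}\ \subseteq\ \Delta_{Y},
$$
the disjoint union running over the proper non-empty faces $\mathcal F$ of $P$. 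This rests on two points: (i) every element of $N_{\infty}$ has this form, which is the difficult direction — it calls for an interpolation argument refining Lemma~\ref{l12}, using the convexity of $u\mapsto\beta(u)$ (Lemma~\ref{nl1}) and of the exponential sums to follow a fixed $p$ continuously along the family $Q(\beta)$ while keeping control of which face of $P$ the rescaled points $u/\beta$ approach, and invoking Lemma~\ref{fremog} to pass between ``$\beta\to\infty$'' and ``$\lVert u\rVert\to\infty$''; and (ii) every point of the right-hand side is genuinely a limit of points of the $N_{\beta}$'s, which is the easy direction, essentially a one-parameter version of Lemma~\ref{l12}. A convenient reformulation of both is that $N_{\beta}\to N_{\infty}$ in the Hausdorff metric, one inclusion being Lemma~\ref{l12} and the other the definition of $N_{\infty}$.

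It then remains to analyse the pieces. Because $\langle w_{\mathcal F},c_{s}\rangle=F(s)>0$ for $s\in I_{\mathcal F}$, no convex combination of the $c_{s}$, $s\in I_{\mathcal F}$, can vanish, i.e.\ $0\notin\operatorname{conv}\{c_{s}:s\in I_{\mathcal F}\}$; hence, after quotienting out the subspace orthogonal to $\operatorname{span}\{c_{s}:s\in I_{\mathcal F}\}$, the set $\{\,z:\sum_{s\in I_{\mathcal F}}e^{\langle z,c_{s}\rangle}\le 1\,\}$ is a line-free, unbounded, convex body with non-empty interior in a Euclidean space of dimension $\dim\operatorname{span}\{c_{s}:s\in I_{\mathcal F}\}=n-\dim\mathcal F$. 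Its boundary is homeomorphic to a Euclidean space of dimension $(n-1)-\dim\mathcal F$ and parametrises the set in the display before taking its closure; passing to the closure in $\Delta_{I_{\mathcal F}}$ adjoins a sphere at infinity, so each $N_{\infty}^{\mathcal F}$ is a topological closed ball of dimension $(n-1)-\dim\mathcal F$. Moreover one checks that $N_{\infty}^{\mathcal G}\subseteq\overline{N_{\infty}^{\mathcal F}}$ precisely when $\mathcal F\subseteq\mathcal G$, so $\mathcal F\mapsto N_{\infty}^{\mathcal F}$ is an anti-isomorphism from the poset of proper non-empty faces of $P$ onto the face poset of $N_{\infty}$, which by polarity is isomorphic to the face poset of $\partial P^{\circ}$. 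Since a regular CW complex is homeomorphic to the geometric realisation of its face poset, $N_{\infty}$ and $\partial P^{\circ}$ are homeomorphic, and $N_{\infty}\cong\partial P^{\circ}\cong S^{n-1}$.

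The main obstacle is point (i): showing that every predicted limit point is actually attained. One cannot simply pass to the limit in the homeomorphisms $Q(\beta)\cong S^{n-1}$ of Lemma~\ref{what12}, since these maps do \emph{not} converge — the sphere collapses in the directions transverse to the faces of $P$, and the transitions become instantaneous as $\beta\to\infty$ — so one is forced to interpolate across $\beta$, and this is where the bulk of the finite-dimensional convex geometry lies. A secondary difficulty is the bookkeeping at non-simple faces of $P$, where $I_{\mathcal F}$ has more than $n-\dim\mathcal F$ elements and the exponential-family strata and their mutual incidences must be disentangled with care.
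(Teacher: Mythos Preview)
Your strategy is genuinely different from the paper's, and conceptually attractive: you are trying to read off the sphere from the combinatorics of a polytope, whereas the paper builds an explicit homeomorphism $H:S^{n-1}\to N_\infty$ by hand. It is worth noting that your polytope $P$ and the paper's fan are two faces of the same object: the cones $M(Z)$ used in the paper (Section~6.1) are exactly the radial cones over the faces of $\partial P$, with the maximal index set $Z$ equal to your active set $I_{\mathcal F}$. The paper then defines $H$ recursively on the $k$-skeleton of this fan, sending the centre $c(M(Z))$ to the unique point of $N_\infty$ with support $Z$ (Lemma~6.7), and interpolating outwards by an explicit formula ((6.1)); continuity, injectivity and surjectivity are each proved by induction on $k$. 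One difference in viewpoint: the paper stratifies $N_\infty$ by the set of \emph{top} coordinates (the argmax of $t_s^{1/F(s)}$), not by the support as you do, and its recursion lives on the domain $S^{n-1}$ rather than on the target.

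That said, your proposal is a programme rather than a proof, and several of the steps you flag as routine are not. First, the disjoint-union display cannot be right as written: you take closures in the definition of $N_\infty^{\mathcal F}$ and then assert the union is disjoint, but the closures overlap exactly along the incidences you later want. More substantively, the claim that the closure of each open stratum in $\Delta_{I_{\mathcal F}}$ is a closed ball whose boundary is precisely $\bigcup_{\mathcal G\supsetneq\mathcal F}N_\infty^{\mathcal G}$ is the heart of the matter, and ``adjoins a sphere at infinity'' is not an argument. At a non-simple vertex one has $|I_{\mathcal F}|>n-\dim\mathcal F$, and one must rule out boundary points supported on subsets $J\subsetneq I_{\mathcal F}$ that are not themselves active sets of faces; this is exactly the bookkeeping you acknowledge, and it is where the real work lies. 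Finally, invoking ``a regular CW complex is determined by its face poset'' presupposes that the attaching maps are embeddings onto unions of lower cells; establishing this is essentially equivalent to the injectivity and surjectivity inductions that occupy Lemmas~6.10--6.13 in the paper. So your route is viable, and arguably more conceptual, but making it rigorous would require roughly the same quantity of finite-dimensional convex geometry as the paper's direct construction.
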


\section{Proof of Theorem \ref{Sn-1}}

By definition $N_{\beta} \subseteq \Delta_{{Y}}$ is the image of the map
$$
Q(\beta) \ni u \to \Big( e^{-\beta F(s)}e^{u \cdot c_s} \Big)_{s \in {Y}} \in \Delta_{{Y}},
$$
and $N_{\infty}$ is the set of elements $t \in \Delta_{{Y}}$ for which there exist sequences $\{\beta_{m}\} \subseteq \mathbb{R}$ and $\{u_{m}\} \subseteq \mathbb{R}^{n}$ such that $u_{m} \in Q(\beta_{m})$ for all $m$, $\beta_{m} \to \infty$ and
$$
\Big( e^{-\beta_{m} F(s)}e^{u_{m} \cdot c_s} \Big)_{s \in {Y}} \to t \quad \text{ for } m \to \infty \ .
$$

\subsection{Partitioning of $S^{n-1}$ by polyhedral cones}
For any non-empty subset $ Z \subseteq {Y}$ we define $M(Z)$ to be the set
\begin{align*}
M(Z)=\Big\{v \in \mathbb{R}^{n} : \ v \cdot \left(\frac{c_s}{F(s)} - \frac{c_z}{F(z)}\right)& \leq 0 \quad \forall s \in {Y}\setminus Z , \ \forall z \in Z \ ,\\
&v \cdot \left(\frac{c_{z'}}{F(z')} - \frac{c_z}{F(z)}\right) = 0 \quad \forall z, z'\in  Z \Big\}.
\end{align*}
Notice that these sets are \emph{convex polyhedral cones}. In the following 'polyhedral cone' will always mean a cone of this form. We refer to \cite{Fu} and \cite{La} for the facts we need on such cones and which we state in the following. A \emph{face} of some $M(Z)$ is a subset $T$ of $M(Z)$ obtained by changing some of the inequalities in the definition into equalities, i.e. a face of $M(Z)$ is a set of the form $M(Z')$ with $Z \subseteq Z'$. Equivalently a convex subset $T$ is a face of $M(Z)$ if for any two distinct points $x,y \in M(Z)$ the implication $(x,y) \cap T \neq \emptyset \ \Rightarrow \ [x,y] \subseteq T$ holds, where $(x,y)$ is the open line segment from $x$ to $y$ and $[x,y]$ is the closed line segment from $x$ to $y$. Each face of $M(Z)$ is again a convex polyhedral cone, and a face of a face is a face. The intersection of two polyhedral cones $M(Z)$ and $M(Z')$ is again a polyhedral cone, equal to $M(Z \cup Z')$. In particular, for every polyhedral cone $M(Z)$ there is a unique subset $Z' \subseteq Y$, which we call \emph{maximal} with the property that $M(Z) = M(Z')$ and $M(Z) = M(Z'') \Rightarrow Z'' \subseteq Z'$. For all $Z \subseteq {Y}$ we let $\mathcal{F}(M(Z))$ be the set of faces in $M(Z)$, which is a finite set. A \emph{proper face} of $M(Z)$ is a face $T$ of $M(Z)$ with $T \neq M(Z)$; we denote the set of these by $\mathcal{F}_{0}(M(Z))$. We define the dimension of a polyhedral cone $M(Z)$ to be $\text{dim}(M(Z)-M(Z))$, i.e. the dimension of the smallest subspace of $\mathbb R^n$ containing $M(Z)$, and we call something a \emph{$k$-face} if it is a face of dimension $k$. A \emph{facet} of $M(Z)$ is then a face of $M(Z)$ of dimension $\text{dim}(M(Z))-1$. It is well-known that any proper face of a polyhedral cone $M(Z)$ is contained in a facet of $M(Z)$.

A polyhedral cone $M(Z)$ is \emph{strongly convex}, meaning that $M(Z) \cap (-M(Z))=\{0\}$. To see this, take a $v \in M(Z) \cap (-M(Z))$ and a $z\in Z$. Then $v$ satisfies:
$$
v \cdot\left( \frac{c_s}{F(s)}-\frac{c_z}{F(z)} \right) =0 \qquad \forall s \in {Y} 
$$ 
However if $v \cdot c_z \geq 0$ this would imply that $v \cdot c_s \geq 0$ for all $s \in {Y}$ which can not be true unless $v=0$, cf. the proof of Lemma \ref{uniqueu}, and likewise $v \cdot c_z \leq 0$ would imply that $v \cdot c_s \leq 0$ for all $s \in {Y}$ which also implies $v=0$.

When we use the expression $\mathrm{Int}(M(Z))$ we mean the topological interior of $M(Z)$ in the subspace $M(Z)-M(Z)$ with the relative topology. For each of our polyhedral cones $M(Z)$ of dimension at least $1$ we define an element $c(M(Z)) \in \mathrm{Int}(M(Z)) \cap S^{n-1}$ which we call the \emph{center} of $M(Z)$ as follows: Say $M(Z)$ has $q$ $1$-faces $T_{1}, \dots, T_{q} $. Since $M(Z)$ is strongly convex we can then write $M(Z) = \{r_{1}v_{1} +\dots + r_{q}v_{q} \ :  \ r_{i} \geq 0\}$ where each $v_{i} $ is the unique $ v_{i} \in T_{i} \cap S^{n-1}$, by $(13)$ of Section 1.2 in \cite{Fu}. We set
$$
c(M(Z)): = \frac{\frac{1}{q} \sum_{i=1}^{q} v_{i}}{\lVert \frac{1}{q} \sum_{i=1}^{q} v_{i} \rVert}
$$ 
and then $c(M(Z)) \in \mathrm{Int}(M(Z))$, cf. \cite{Fu}.
\begin{lemma} \label{l204}
Let $M(Z)$ be a polyhedral cone with $Z$ chosen maximal. The following holds: 
\begin{enumerate}
\item\label{1.1} If $T \in \mathcal{F}(M(Z))$ and $T=M(Z')$ with $Z'$ chosen maximal, then $Z \subseteq Z'$ and $M(Z) =T$ if and only if $Z=Z'$. 
\item\label{1.3} For each $z \in Z$,
\begin{equation*}
\begin{split}
& \mathrm{Int}(M(Z)) \\
& = \left\{ v \in \mathbb{R}^{n} : \ v \cdot\left( \frac{c_s}{F(s)} -\frac{c_z}{F(z)} \right) <0 \ \forall s\in {Y}\setminus Z   , \ v \cdot\left( \frac{c_s}{F(s)} -\frac{c_z}{F(z)} \right) =0  \ \forall s \in Z  \right\}\\
& =
M(Z) \setminus \left( \bigcup_{T \in \mathcal{F}_{0}(M(Z))} T  \right).
\end{split}
\end{equation*}
\end{enumerate}
\end{lemma}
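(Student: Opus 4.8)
The plan is to handle the two assertions separately; the first is formal, and the content is in the second. For assertion (1): since $T$ is a face of $M(Z)$, it can be written $T = M(Z'')$ for some $Z'' \supseteq Z$ by the recalled facts about polyhedral cones; then $M(Z'') = T = M(Z')$ and the maximality of $Z'$ force $Z'' \subseteq Z'$, so $Z \subseteq Z'' \subseteq Z'$. For the equivalence, $Z = Z'$ trivially gives $T = M(Z') = M(Z)$, while conversely $M(Z) = M(Z')$ together with the maximality of $Z$ gives $Z' \subseteq Z$, hence $Z = Z'$.

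For assertion (2), fix $z \in Z$ and write $d_s = \frac{c_s}{F(s)} - \frac{c_z}{F(z)}$ for $s \in Y$, so that $M(Z)$ is the set of $v$ with $v \cdot d_s = 0$ for $s \in Z$ and $v \cdot d_s \leq 0$ for $s \in Y \setminus Z$; let $L = \{v : v \cdot d_s = 0 \ \forall s \in Z\}$ be the subspace defined by the equalities, let $A$ be the set on the right-hand side of the first displayed equality, and let $W = M(Z) - M(Z)$. I would first record the bookkeeping facts $0 \in M(Z)$ (so $M(Z) \subseteq W$) and $A \subseteq M(Z) \subseteq W \subseteq L$, together with the identity $M(Z) \cap \{v : v \cdot d_s = 0\} = M(Z \cup \{s\})$ for $s \in Y \setminus Z$, a routine consequence of the definitions. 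The inclusion $A \subseteq \mathrm{Int}(M(Z))$ is then easy: $A$ is the intersection of $L$ with finitely many open half-spaces $\{v : v \cdot d_s < 0\}$, hence open in $L$, hence (being contained in $W$) open in $W$; since $A \subseteq M(Z)$ it lies in the relative interior.

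The heart of the matter is the reverse inclusion $\mathrm{Int}(M(Z)) \subseteq A$, and this is where the hypothesis that $Z$ is maximal gets used. Suppose $v \in \mathrm{Int}(M(Z))$ but $v \cdot d_s = 0$ for some $s \in Y \setminus Z$. For any $w \in M(Z)$ the point $(1+\varepsilon)v - \varepsilon w$ lies in $W$, and since $v$ is interior relative to $W$ it lies in $M(Z)$ for all small $\varepsilon > 0$; evaluating the linear functional $d_s$ on it gives $-\varepsilon\,(w \cdot d_s) \leq 0$, so $w \cdot d_s \geq 0$, and combined with $w \cdot d_s \leq 0$ (valid because $w \in M(Z)$, $s \notin Z$) we conclude $w \cdot d_s = 0$ for every $w \in M(Z)$. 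Then $M(Z) = M(Z) \cap \{v : v \cdot d_s = 0\} = M(Z \cup \{s\})$, contradicting the maximality of $Z$ because $s \notin Z$. Hence $v \cdot d_s < 0$ for all $s \in Y \setminus Z$, i.e. $v \in A$, which proves $\mathrm{Int}(M(Z)) = A$. For the second displayed equality I would use assertion (1): a point $v \in A$ cannot lie in a proper face $M(Z')$ (with $Z'$ maximal), since there $Z \subsetneq Z'$ gives an $s \in Z' \setminus Z$ with $v \cdot d_s = 0$, contradicting $v \in A$; conversely, a point $v \in M(Z) \setminus A$ satisfies $v \cdot d_s = 0$ for some $s \in Y \setminus Z$ (it already satisfies $v \cdot d_s \le 0$ and lies in $L$), hence lies in $M(Z \cup \{s\}) = M(Z) \cap \{w : w \cdot d_s = 0\}$, which is a proper face precisely because $Z$ is maximal.

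The main obstacle is the inclusion $\mathrm{Int}(M(Z)) \subseteq A$: the surrounding arguments are either formal facts about faces and maximal representations or a one-line openness check, but here one must upgrade the vanishing of one inequality at one interior point to its vanishing on all of $M(Z)$ (the ``push past the interior point'' trick) and then extract the contradiction with maximality. A secondary point to keep straight is that all the interior and openness statements must be taken relative to the span $W = M(Z) - M(Z)$ --- which legitimately contains $M(Z)$ because $0 \in M(Z)$ --- since $M(Z)$ need not be full-dimensional in $\mathbb R^n$.
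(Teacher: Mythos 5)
Your proof is correct, and for part (2) it takes a more self-contained route than the paper. For part (1) you argue essentially as the paper does (the paper routes through the identity $M(Z')\cap M(Z)=M(Z\cup Z')$ and maximality of $Z'$; you use directly that a face of $M(Z)$ is some $M(Z'')$ with $Z\subseteq Z''$ --- the same one-line idea). For part (2), however, the paper simply cites equation (7) of Section~1.2 in Fulton's book for the equality $\mathrm{Int}(M(Z))=M(Z)\setminus\bigcup_{T\in\mathcal F_0(M(Z))}T$ and asserts without detail that the first equality ``follows from the maximality of $Z$''. You instead prove everything from scratch: the inclusion $A\subseteq\mathrm{Int}(M(Z))$ by openness of $A$ in the subspace $W=M(Z)-M(Z)$, the reverse inclusion by the observation that if a defining inequality $v\cdot d_s\le 0$ ($s\notin Z$) is tight at a point $v$ interior relative to $W$, then pushing slightly past $v$ inside $M(Z)$ forces it to be tight on all of $M(Z)$, whence $M(Z)=M(Z\cup\{s\})$ contradicts maximality; and you then derive the facial description from part (1) together with the verified identity $M(Z)\cap\{w: w\cdot d_s=0\}=M(Z\cup\{s\})$. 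What each approach buys: the paper's proof is two lines and leans on standard facts about polyhedral cones, while yours is elementary, pinpoints exactly where maximality of $Z$ enters in both displayed equalities, and correctly keeps all interiors relative to $W$ (the subtlety the paper's definition of $\mathrm{Int}$ demands). No gaps.
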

\begin{proof} (1): Since $M(Z') \subseteq M(Z)$ we have that $M(Z')=M(Z') \cap M(Z)=M(Z \cup Z')$, so $Z \cup Z' \subseteq Z'$ by maximality, and hence $Z \subseteq Z'$. That $Z=Z' \iff M(Z) = M(Z')$ follows directly from the way we defined $M$ and the fact that $Z$ and $Z'$ both are maximal. (2): The second equality is $(7)$ in section $1.2$ of \cite{Fu} and the first follows from the maximality of $Z$. 
\end{proof}

Set $\mathrm{bd}(M(Z))=M(Z) \setminus \mathrm{Int}(M(Z))$.

\begin{lemma}\label{opsplit}
Let $M(Z)$ be at least $2$-dimensional. Fix $v \in \mathrm{Int}(M(Z)) \cap S^{n-1}$. For every $w \in (M(Z)\cap S^{n-1}) \setminus \{v\}$ there exists a unique pair $(\lambda, u)$ where $\lambda \in ]0,1]$ and $u \in \mathrm{bd}(M(Z))\cap S^{n-1}$ such that
$$
w = \frac{(1-\lambda)v+\lambda u}{\lVert(1-\lambda)v+\lambda u \rVert}
$$
\end{lemma}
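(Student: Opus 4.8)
The plan is to cut everything down to an elementary fact about a planar sector. First I would pass to the two-dimensional plane $P := \mathbb{R}v + \mathbb{R}w$; here $v$ and $w$ really are linearly independent, since $w \ne v$ and both are unit vectors, while $w = -v$ is excluded because it would force $v \in M(Z) \cap (-M(Z)) = \{0\}$ by the strong convexity of $M(Z)$ established above. Set $C := M(Z) \cap P$, and let $\mathrm{Int}(C)$, $\mathrm{bd}(C)$ denote interior and boundary of $C$ inside the plane $P$. Then $C$ is a closed convex cone in $P$, it is pointed because $C \cap (-C) \subseteq M(Z) \cap (-M(Z)) = \{0\}$, and it is two-dimensional because $v, w \in C$ span $P$, so that $C - C = P$. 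A pointed, two-dimensional, closed convex cone in a plane is a sector: there are linearly independent unit vectors $e_1, e_2 \in P$ with $C = \mathbb{R}_{\ge 0}e_1 + \mathbb{R}_{\ge 0}e_2$, with $\mathrm{bd}(C) = \mathbb{R}_{\ge 0}e_1 \cup \mathbb{R}_{\ge 0}e_2$, and with $\mathrm{bd}(C) \cap S^{n-1} = \{e_1, e_2\}$, $e_1 \ne e_2$; moreover $v$ lies in $\mathrm{Int}(C)$.

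Next I would check that the boundary vectors of $M(Z)$ that can occur in the statement are exactly $e_1$ and $e_2$. For this it suffices to prove $\mathrm{Int}(M(Z)) \cap P = \mathrm{Int}(C)$, whence $\mathrm{bd}(M(Z)) \cap P = \mathrm{bd}(C)$ and $\mathrm{bd}(M(Z)) \cap P \cap S^{n-1} = \{e_1, e_2\}$. The inclusion $\supseteq$ is immediate, since a ball in $M(Z) - M(Z)$ around a point of $P$, intersected with $P$, is a ball in $P$. For $\subseteq$, given $x \in \mathrm{Int}(C)$ with $x \ne v$, pick $\delta > 0$ small enough that $x + \delta(x - v) \in C$; then $x = \tfrac{\delta}{1+\delta}v + \tfrac{1}{1+\delta}\bigl(x + \delta(x-v)\bigr)$ is a convex combination, with positive weight on the interior point $v \in \mathrm{Int}(M(Z))$, of two points of $M(Z)$, so $x \in \mathrm{Int}(M(Z))$ by convexity. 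The case $x = v$ is the hypothesis.

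The core is then a planar computation. For $i = 1, 2$ the map $F_i(\lambda) := (1-\lambda)v + \lambda e_i$ does not vanish on $[0,1]$ — again by pointedness of $C$ — so $G_i(\lambda) := F_i(\lambda)/\lVert F_i(\lambda)\rVert$ is a continuous map $[0,1] \to C \cap S^{n-1}$. Linear independence of $v$ and $e_i$ makes $G_i$ injective: $G_i(\lambda) = G_i(\lambda')$ gives $F_i(\lambda') = cF_i(\lambda)$ with $c > 0$, and comparing the coefficients of $v$ and $e_i$ yields $c = 1$ and $\lambda = \lambda'$. Writing $v = pe_1 + qe_2$ with $p, q > 0$ and a general $w = ae_1 + be_2 \in C$ with $a, b \ge 0$ not both zero, a direct check shows that $w \in G_1([0,1])$ if and only if $qa \ge pb$, and $w \in G_2([0,1])$ if and only if $pb \ge qa$; hence $G_1([0,1]) \cup G_2([0,1]) = C \cap S^{n-1}$ while $G_1([0,1]) \cap G_2([0,1]) = \{v\}$. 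Since $w \in (C \cap S^{n-1}) \setminus \{v\}$, it lies in exactly one $G_i([0,1])$, at a unique parameter $\lambda$, which is nonzero because $G_i(0) = v \ne w$; this produces a pair $(\lambda, u) = (\lambda, e_i) \in \,]0,1] \times (\mathrm{bd}(M(Z)) \cap S^{n-1})$ with the required property.

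For uniqueness, if $(\lambda, u)$ works with $\lambda \in \,]0,1]$ and $u \in \mathrm{bd}(M(Z)) \cap S^{n-1}$, then $(1-\lambda)v + \lambda u$ is a positive multiple of $w$, and $\lambda > 0$ forces $u \in \mathbb{R}v + \mathbb{R}w = P$; hence $u \in \mathrm{bd}(M(Z)) \cap P \cap S^{n-1} = \{e_1, e_2\}$, say $u = e_j$ and $w = G_j(\lambda)$. The covering-and-disjointness statement pins down $j$ as the unique index with $w \in G_j([0,1])$, and then injectivity of $G_j$ pins down $\lambda$. I expect the main obstacle to be the bookkeeping in the planar step — showing that the two normalized segments $G_1$ and $G_2$ exactly tile the spherical arc $C \cap S^{n-1}$ and meet only at $v$ — together with the observation that $\lambda > 0$ is precisely what pushes the boundary vector $u$ back into the plane $P$, which is what makes the reduction to the planar picture legitimate.
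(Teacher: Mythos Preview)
Your argument is correct, and for existence it follows the same plan as the paper: pass to the two-plane $P=\operatorname{span}\{v,w\}$, use strong convexity to rule out $w=-v$, and push the arc from $v$ through $w$ until it meets the boundary. One cosmetic slip: in the paragraph establishing $\mathrm{Int}(M(Z))\cap P=\mathrm{Int}(C)$ you have the labels ``$\supseteq$'' and ``$\subseteq$'' interchanged --- the ball-intersection argument proves $\mathrm{Int}(M(Z))\cap P\subseteq\mathrm{Int}(C)$, and the convex-combination-with-$v$ argument proves the reverse inclusion. Both arguments are present and correct, only the tags are swapped.

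Where you genuinely diverge from the paper is in the uniqueness step. The paper exploits the concrete description of $M(Z)$: given two candidate pairs $(\lambda,u)$ and $(\lambda',u')$, it places $u$ in a proper face $M(Z_1)$ with $Z\subsetneq Z_1$, picks $s\in Z_1\setminus Z$, and dots everything against $q=c_s/F(s)-c_z/F(z)$; the strict inequality $q\cdot v<0$ from the interior description then forces the two decompositions to agree. Your route is purely geometric: you first pin down $\mathrm{bd}(M(Z))\cap P\cap S^{n-1}=\{e_1,e_2\}$, then observe that $\lambda>0$ forces any admissible $u$ into $P$, hence into $\{e_1,e_2\}$, and finish with the tiling $G_1([0,1])\cup G_2([0,1])=C\cap S^{n-1}$, $G_1([0,1])\cap G_2([0,1])=\{v\}$. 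This buys generality --- your proof uses nothing about $M(Z)$ beyond it being a closed strongly convex cone with $v$ in its relative interior --- whereas the paper's computation is shorter once one has the face lattice of $M(Z)$ in hand and feeds directly into the later inductive arguments.
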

\begin{proof} We may assume that $Z$ is maximal. Consider the subspace $W:=\text{span}\{v,w\}$. Note that $w \neq -v$ since $M(Z) \cap \left(-M(Z)\right) = \{0\}$. Hence $\text{dim}(W)=2$. Since $W \cap M(Z)$ is a closed convex cone in $W$ and $-v \notin W \cap M(Z)$, the circle arch in $W$ starting in $v$ and continuing through $w$ must reach the boundary of $M(Z) \cap W$ at some $u$ with an angle to $v$ less than $\pi$ and 
\begin{equation}\label{holds}
w=\frac{(1-\lambda)v+\lambda u}{\lVert (1-\lambda)v+\lambda u\rVert } .
\end{equation}
for some $\lambda \in ]0,1]$. Since $u$ lies in the boundary of $M(Z) \cap W$ it can be approximated by elements from $M(Z)^{c}\cap W$ and hence $u \in \mathrm{bd}(M(Z))$. To establish the uniqueness part, assume \eqref{holds} holds with $(\lambda,u)$ replaced by the pair $(\lambda', u') \in  ]0,1] \times \mathrm{bd}(M(Z))\cap S^{n-1}$. Fix a $z \in Z$. It follows from \eqref{1.1} in Lemma \ref{l204} that there is a subset $Z_1 \subseteq Y$ such that $u \in M(Z_{1})$ and $Z \subsetneq Z_{1}$. We can assume that
$$
\alpha =  \frac{(1-\lambda')}{\lVert(1-\lambda')v+\lambda' u' \rVert}-\frac{(1-\lambda)}{\lVert(1-\lambda)v+\lambda u \rVert} \geq 0 \ .
$$ 
Consider an element $s \in Z_{1} \setminus Z$ and set $q:=c_s/F(s)-c_z/F(z)$. Then 
$$
0=q \cdot \frac{\lambda u}{\lVert(1-\lambda)v+\lambda u \rVert} = q \cdot \alpha v +  q \cdot \frac{\lambda' u'}{\lVert(1-\lambda')v+\lambda' u' \rVert}.
$$ 
But $q \cdot  v < 0$ since $s \notin Z$ and  $q \cdot \lambda' u' \leq 0$ since $u' \in M(Z)$, and hence $\alpha =0$. This implies that
$$
u \cdot \frac{\lambda }{\lVert(1-\lambda)v+\lambda u \rVert}=u' \cdot \frac{\lambda' }{\lVert(1-\lambda')v+\lambda' u' \rVert}.
$$
Since $u, u' \in S^{n-1}$ and $\alpha = 0$ it follows first that  $\lVert(1-\lambda)v+\lambda u \rVert = \lVert(1-\lambda')v+\lambda' u' \rVert$ and $\lambda = \lambda'$, and then also that $u = u'$. 
\end{proof}

We denote the $u$ of Lemma \ref{opsplit} by $P(w)$ and the $\lambda$ by $\lambda_{w}$. We suppress the $v$ in the notation because it will always be $c(M(Z))$ in the following.

\begin{lemma}\label{CONT0}
The maps $w \mapsto P(w)$ and $w \mapsto \lambda_{w}$ are continuous from $(M(Z) \cap S^{n-1}) \setminus \{v\}$ to $S^{n-1}$ and $[0,1]$ respectively.
\end{lemma}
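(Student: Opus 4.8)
The plan is to argue by sequential compactness, reducing everything to the uniqueness statement in Lemma~\ref{opsplit}. Write $v = c(M(Z))$ and suppose $w_{k} \to w$ in $(M(Z) \cap S^{n-1}) \setminus \{v\}$; I must show $P(w_{k}) \to P(w)$ and $\lambda_{w_{k}} \to \lambda_{w}$. Since $S^{n-1} \times [0,1]$ is compact, it suffices, by the usual subsequence principle, to prove that every convergent subsequence of $\{(P(w_{k}),\lambda_{w_{k}})\}$ has limit $(P(w),\lambda_{w})$. So I pass to a subsequence along which $P(w_{k}) \to u^{\ast}$ and $\lambda_{w_{k}} \to \lambda^{\ast}$ for some $u^{\ast}\in S^{n-1}$, $\lambda^{\ast}\in[0,1]$. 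First I would note that $u^{\ast} \in \mathrm{bd}(M(Z)) \cap S^{n-1}$: by Lemma~\ref{l204} the set $\mathrm{bd}(M(Z)) = \bigcup_{T \in \mathcal{F}_{0}(M(Z))} T$ is a finite union of closed faces, hence closed.

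The crux is to exclude the degenerate limit $\lambda^{\ast} = 0$. If $\lambda^{\ast} = 0$, then letting $k \to \infty$ in
$$
w_{k} = \frac{(1-\lambda_{w_{k}})v + \lambda_{w_{k}} P(w_{k})}{\lVert (1-\lambda_{w_{k}})v + \lambda_{w_{k}} P(w_{k})\rVert}
$$
the numerator tends to $v$ and, since $\lVert v\rVert = 1$, the denominator to $1$, forcing $w_{k} \to v$, which contradicts $w \neq v$. Hence $\lambda^{\ast} \in \,]0,1]$. I would also record that $(1-\lambda^{\ast})v + \lambda^{\ast} u^{\ast} \neq 0$: this vector lies in the convex cone $M(Z)$, and its vanishing would give $v \in -M(Z)$ when $\lambda^{\ast} < 1$, so $v = 0$ by strong convexity of $M(Z)$, or $u^{\ast} = 0$ when $\lambda^{\ast} = 1$ --- either way contradicting membership in $S^{n-1}$.

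Finally, passing to the limit in the displayed identity along the subsequence, and using that the denominator converges to the nonzero number $\lVert (1-\lambda^{\ast})v + \lambda^{\ast} u^{\ast}\rVert$, I obtain
$$
w = \frac{(1-\lambda^{\ast})v + \lambda^{\ast} u^{\ast}}{\lVert (1-\lambda^{\ast})v + \lambda^{\ast} u^{\ast}\rVert}, \qquad \lambda^{\ast} \in \,]0,1], \quad u^{\ast} \in \mathrm{bd}(M(Z)) \cap S^{n-1}.
$$
By the uniqueness assertion of Lemma~\ref{opsplit} this forces $\lambda^{\ast} = \lambda_{w}$ and $u^{\ast} = P(w)$, completing the subsequence argument and hence establishing continuity of both maps. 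The main obstacle is really just the single step of ruling out $\lambda^{\ast} = 0$ --- equivalently, checking that $P$ and $\lambda$ cannot blow up while $w$ stays away from the center $v$; everything else is a routine combination of compactness with the uniqueness already proved in Lemma~\ref{opsplit}.
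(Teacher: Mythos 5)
Your proof is correct and follows essentially the same route as the paper: sequential compactness to extract a convergent subsequence of $(P(w_k),\lambda_{w_k})$, passage to the limit in the defining identity, and then the uniqueness part of Lemma~\ref{opsplit}. The only difference is that you explicitly verify the hypotheses of that uniqueness statement (closedness of $\mathrm{bd}(M(Z))$, exclusion of $\lambda^{\ast}=0$, nonvanishing of the limiting denominator via strong convexity), details the paper leaves implicit.
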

\begin{proof}
Assume that $\{w_{m}\} \subseteq (M(Z) \cap S^{n-1}) \setminus \{v\}$  converges to a $w$ in this set. If $\lambda_{w_{m}} \nrightarrow \lambda_{w}$ or $P(w_{m}) \nrightarrow P(w)$ then by compactness of $[0,1]$ and $S^{n-1}$ we can take a subsequence $\{w_{m_{i}}\}$ of $\{w_{m}\}$ such that $\lambda_{w_{m_{i}}} \to \lambda$ and $P(w_{m_{i}}) \to u$, with either $\lambda \neq \lambda_{w}$ or $u \neq P(w)$.  Since $\lim_{i} w_{m_{i}} = w$ we then have:
\begin{align*}
&\frac{(1-\lambda_{w})v+\lambda_{w}P(w)}{\lVert (1-\lambda_{w})v+\lambda_{w}P(w) \rVert}=w  = \lim_{i}  \frac{(1-\lambda_{w_{m_{i}}})v+\lambda_{w_{m_{i}}}P(w_{m_{i}})}{\lVert (1-\lambda_{w_{m_{i}}})v+\lambda_{w_{m_{i}}}P(w_{m_{i}}) \rVert} = \frac{(1-\lambda)v+\lambda u}{\lVert (1-\lambda)v+\lambda u \rVert}.
\end{align*}
The uniqueness part of Lemma \ref{opsplit} implies that $\lambda=\lambda_{w}$ and $u = P(w)$, giving us the desired contradiction.
\end{proof}

\subsection{Constructing a homeomorphism $H: S^{n-1} \to N_{\infty}$}

For each $l \in \{1 , \dots, n \}$ we define the \emph{$l$-skeleton} as the union of all sets $M(Z) \cap S^{n-1}$ with $M(Z)$ a $l'$-dimensional polyhedral cone for some $1 \leq l' \leq l$. The skeletons will be used to give a recursive definition of $H$, but we need some preparations for this.

\begin{lemma}\label{nskel} The $n$-skeleton is all of $S^{n-1}$.
\end{lemma}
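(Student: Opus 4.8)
The plan is to show that every unit vector $v \in S^{n-1}$ lies in some polyhedral cone $M(Z)$ of dimension at most $n$, which amounts to showing that the various cones $M(Z)$, as $Z$ ranges over the non-empty subsets of $Y$, cover all of $\mathbb{R}^n$. First I would fix an arbitrary $v \in \mathbb{R}^n \setminus \{0\}$ and consider the finite set of real numbers $\{v \cdot c_s / F(s) : s \in Y\}$. Let $m = \max_{s \in Y} v \cdot c_s/F(s)$ and set $Z = \{z \in Y : v \cdot c_z/F(z) = m\}$. By construction $Z$ is non-empty, and for every $s \in Y \setminus Z$ and every $z \in Z$ we have $v \cdot (c_s/F(s) - c_z/F(z)) = v\cdot c_s/F(s) - m \leq 0$, while for $z, z' \in Z$ we get $v \cdot (c_{z'}/F(z') - c_z/F(z)) = m - m = 0$. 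Hence $v \in M(Z)$.

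Next I would observe that $M(Z)$ has dimension at least $1$: it contains the non-zero vector $v$. Since $M(Z) \subseteq \mathbb{R}^n$ its dimension is at most $n$, so $M(Z)$ is an $l'$-dimensional polyhedral cone for some $1 \leq l' \leq n$, and therefore $v/\lVert v \rVert \in M(Z) \cap S^{n-1}$ belongs to the $n$-skeleton by definition. Since $v$ was arbitrary, every point of $S^{n-1}$ lies in the $n$-skeleton, i.e. the $n$-skeleton equals $S^{n-1}$.

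This argument is essentially immediate once the defining inequalities of $M(Z)$ are unwound; the only point requiring a moment's care is that $Z$ as defined above need not be the maximal subset representing the cone $M(Z)$, but that is irrelevant here since the $n$-skeleton is defined as a union over \emph{all} cones $M(Z)$ regardless of whether $Z$ is chosen maximal, and in any case a cone only grows (or stays the same) when $Z$ is enlarged to its maximal representative, so the dimension bound $l' \leq n$ is automatic. There is no real obstacle; the lemma is a packaging statement that records the fact that the functions $s \mapsto v \cdot c_s/F(s)$ attain a maximum on the finite set $Y$, and the cones $M(Z)$ are precisely the loci where a prescribed subset of these functions simultaneously achieves that maximum. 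The main content of the surrounding construction lies in the later lemmas about how the skeletons fit together, not in this covering statement.
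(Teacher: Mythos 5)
Your proof is correct and is essentially the paper's own argument: the paper likewise picks $s$ maximizing $v \cdot c_s/F(s)$ over the finite set $Y$ and concludes $v \in M(\{s\})$, the only cosmetic difference being that you take $Z$ to be the full set of maximizers rather than a single one. The extra remarks about maximality and the dimension bound are fine but not needed.
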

\begin{proof} Let $v \in S^{n-1}$ and choose $s \in Y$ such that $v \cdot \frac{c_y}{F(y)} \leq v \cdot \frac{c_s}{F(s)}$ for all $y\in Y$. Then $v \in M(\{s\})$.
\end{proof}

\begin{defn}
We say a sequence $\{v_{k}\}_{k \in \mathbb{N}} \subseteq \mathbb{R}^{n}$ is \emph{associated} with a $t \in N_{\infty}$ when $\beta(v_{k}) \to \infty$ and
$$
\left(  e^{-\beta(v_{k})F(s)+v_{k} \cdot c_s} \right)_{s \in {Y}} \ \to \ t
$$
for $k \to \infty$.
\end{defn}


\begin{lemma}\label{niceface}
Assume $\{u_{k}\}_{k \in \mathbb{N}}$ is associated with $t \in N_{\infty}$ and that $u_{k}  \in M(Z)$ for all $n$. Then 
\begin{enumerate}
\item\label{2.1}  $ t_{s}^{1/F(s)} \leq t_{z}^{1/F(z)} \ \forall z \in Z\ \forall s \in Y$, and
\item\label{2.2} $t_{z} \neq 0$ and $t_{z}^{1/F(z)} = t_{y}^{1/F(y)}$ for all $z,y \in Z$.
\end{enumerate}
\end{lemma}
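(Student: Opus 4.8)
The plan is to extract the asymptotic behaviour of the components $t_s$ from the defining constraint $\sum_{s\in Y} e^{-\beta(u_k)F(s)+u_k\cdot c_s}=1$ together with the membership $u_k\in M(Z)$. First I would introduce, for each $s\in Y$, the quantity $a_k(s)=-\beta(u_k)F(s)+u_k\cdot c_s$, so that $e^{a_k(s)}\to t_s$ and $\sum_s e^{a_k(s)}=1$. Dividing through by $F(s)$ we get $a_k(s)/F(s) = -\beta(u_k) + u_k\cdot c_s/F(s)$, and the key observation is that the differences $a_k(s)/F(s) - a_k(z)/F(z) = u_k\cdot(c_s/F(s)-c_z/F(z))$ are controlled by the cone conditions defining $M(Z)$: for $s\in Y\setminus Z$ and $z\in Z$ this difference is $\le 0$, and for $z,z'\in Z$ it is exactly $0$.

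Next I would turn these inequalities into statements about $t$. From $a_k(z)/F(z)=a_k(z')/F(z')$ for all $z,z'\in Z$ and all $k$, passing to the limit (using $e^{a_k(z)}\to t_z$, so $a_k(z)\to \log t_z\in[-\infty,0]$) gives $\frac{1}{F(z)}\log t_z = \frac{1}{F(z')}\log t_{z'}$, i.e. $t_z^{1/F(z)}=t_{z'}^{1/F(z')}$, which is the equality in \eqref{2.2}; similarly $a_k(s)/F(s)\le a_k(z)/F(z)$ yields $t_s^{1/F(s)}\le t_z^{1/F(z)}$ in the limit, proving \eqref{2.1} (with the convention $0^{1/F}=0$). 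The one remaining and slightly more delicate point is that $t_z\neq 0$ for $z\in Z$. Here I would argue by contradiction: if $t_z=0$ for some $z\in Z$, then by the equality just proved $t_y=0$ for \emph{all} $y\in Z$, and by \eqref{2.1} and $t_s\ge 0$ this forces $t_s=0$ for all $s\in Y$, contradicting $\sum_{s\in Y}t_s=1$ (which holds since $t\in\Delta_Y$, being a limit of points of $N_{\beta_k}\subseteq\Delta_Y$).

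The main obstacle, and the reason the argument is not completely immediate, is controlling the limit of $a_k(z)$: a priori $\beta(u_k)\to\infty$ and $\|u_k\|\to\infty$ (by Lemma~\ref{fremog}), so each term $a_k(s)=-\beta(u_k)F(s)+u_k\cdot c_s$ is a difference of two quantities tending to $+\infty$, and one must be careful that this difference has a well-defined limit in $[-\infty,0]$. This is guaranteed precisely because $e^{a_k(s)}\to t_s\in[0,1]$ by hypothesis, so $a_k(s)\to\log t_s$ (interpreted as $-\infty$ if $t_s=0$); once this is in hand, all the inequalities between the $a_k(s)/F(s)$ pass to the limit without difficulty since $u\mapsto u\cdot q$ is linear and the sign conditions are preserved under limits. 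No further convexity or implicit-function-theorem input is needed beyond what is already recorded; the whole lemma is essentially a bookkeeping exercise once the correct normalisation $a_k(s)/F(s)$ is chosen.
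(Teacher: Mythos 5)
Your proposal is correct and follows essentially the same route as the paper: membership in $M(Z)$ gives $-\beta(u_k)F(s)+u_k\cdot c_s \le \frac{F(s)}{F(z)}\left(-\beta(u_k)F(z)+u_k\cdot c_z\right)$, which passes to the limit to give (1), and (2) is then the easy consequence you spell out (the paper simply states ``(2) follows from (1)'', with your contradiction via $\sum_{s\in Y}t_s=1$ being the implicit argument). Your extra care with $\log t_s=-\infty$ is fine but unnecessary, since one can exponentiate the normalized inequality and use continuity of $x\mapsto x^{1/F(s)}$ on $[0,\infty)$ directly.
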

\begin{proof} Since $u_{k} \in M(Z)$ it follows that $ u_{k} \cdot \frac{c_{s}}{F({s})} \ \leq \ u_{k} \cdot \frac{c_z}{F(z)}$ and hence that 
\begin{align*}
-\beta(u_{k})F({s})+ u_{k} \cdot c_{s} \ \leq \ \frac{F({s})}{F(z)} \left( -\beta(u_{k})F(z)+ u_{k} \cdot c_z \right) 
\end{align*}
for all $k$ when $s \in Y$ and $z \in Z$. This shows that (1) holds. (2) follows from (1).
\end{proof}

\begin{lemma} \label{forhold}
Let $t,\tilde{t} \in \Delta_{{Y}}$. If there are $y , y' \in {Y}$ such that $t_{y} \neq 0$, $\tilde{t}_{y'} \neq 0$ and 
$$
\frac{t_{s}^{1/F(s)}}{t_{y}^{1/F(y)}}=\frac{\tilde{t}_{s}^{1/F(s)}}{\tilde{t}_{y'}^{1/F(y')}} \qquad \forall s \in {Y},
$$
then $t=\tilde{t}$.
\end{lemma}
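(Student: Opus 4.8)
The plan is to extract from the hypothesis a single positive scalar relating $t$ and $\tilde t$ coordinatewise, and then to pin that scalar down to be $1$ using that both $t$ and $\tilde t$ have total mass $1$ together with the strict positivity of $F$ (the standing assumption of this section).

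First I would set $\lambda = t_y^{1/F(y)} \big/ \tilde t_{y'}^{1/F(y')}$, which is a well-defined strictly positive number since $t_y \neq 0$ and $\tilde t_{y'} \neq 0$. Dividing the displayed identity through by $\tilde t_{y'}^{1/F(y')}$ and multiplying by $t_y^{1/F(y)}$ gives $t_s^{1/F(s)} = \lambda\, \tilde t_s^{1/F(s)}$ for every $s \in Y$; raising to the power $F(s) > 0$ then yields $t_s = \lambda^{F(s)}\, \tilde t_s$ for all $s \in Y$. (This last identity also holds trivially when $t_s = \tilde t_s = 0$, and the displayed hypothesis forces $t_s = 0$ exactly when $\tilde t_s = 0$, so there is no issue with vanishing coordinates.)

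Next, summing over $s \in Y$ and using that $t, \tilde t \in \Delta_{{Y}}$, that is $\sum_{s} t_s = \sum_{s} \tilde t_s = 1$, gives $\sum_{s \in Y} \lambda^{F(s)} \tilde t_s = 1$. Now consider the function $h(\mu) = \sum_{s \in Y} \mu^{F(s)} \tilde t_s$ for $\mu \in (0,\infty)$. Since $\tilde t \in \Delta_{{Y}}$ there is at least one $s$ with $\tilde t_s > 0$, and because $F(s) > 0$ the corresponding summand $\mu \mapsto \mu^{F(s)} \tilde t_s$ is strictly increasing on $(0,\infty)$ while every other summand is non-decreasing; hence $h$ is strictly increasing. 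As $h(1) = \sum_s \tilde t_s = 1 = h(\lambda)$, strict monotonicity forces $\lambda = 1$, and therefore $t_s = \lambda^{F(s)} \tilde t_s = \tilde t_s$ for all $s \in Y$, i.e. $t = \tilde t$.

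There is no genuine obstacle in this argument; the only two points requiring a moment's care are that $\lambda$ is honestly positive (which is exactly what the hypotheses $t_y \neq 0$ and $\tilde t_{y'} \neq 0$ provide) and that the strict monotonicity of $h$ relies on the standing assumption $F(s) > 0$ for all $s \in Y$ — without it $h$ could be constant and $\lambda$ would not be determined.
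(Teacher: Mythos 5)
Your proof is correct and follows essentially the same route as the paper: both arguments express the two probability vectors as a coordinatewise rescaling by powers $\lambda^{F(s)}$ and then use that the strictly increasing function $\mu \mapsto \sum_{s \in Y} \mu^{F(s)} \tilde t_s$ (relying on $F(s)>0$ and $\tilde t_{y'}>0$) takes the value $1$ at both $\mu = 1$ and $\mu = \lambda$, forcing $\lambda = 1$. The paper's version merely phrases this with the unscaled quantities $t_y^{1/F(y)}$ and $\tilde t_{y'}^{1/F(y')}$ in place of your ratio $\lambda$, so the two proofs are the same argument up to reparametrization.
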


\begin{proof} Note that $\tilde{t}_{s}=\tilde{t}_{y'}^{F(s)/F(y')} \left( t_{s}^{1/F(s)} /t_{y}^{1/F(y)}\right)^{F(s)}$ and hence 
\begin{equation}\label{0903}
\sum_{s \in {Y}} \left(t_{y}^{1/F(y)}\right)^{F(s)} \left( \frac{t_{s}^{1/F(s)}}{t_{y}^{1/F(y)}} \right)^{F(s)} = \sum_{s \in {Y}} t_{s} = 1 = \sum_{s \in {Y}} \tilde{t}_{s} = \sum_{s \in {Y}} \left(\tilde{t}_{y'}^{1/F(y')}\right)^{F(s)} \left( \frac{t_{s}^{1/F(s)}}{t_{y}^{1/F(y)}} \right)^{F(s)}.
\end{equation}
Since the function
$$
]0,\infty[ \ni x \mapsto \sum_{s \in {Y}} x^{F(s)} \left( \frac{t_{s}^{1/F(s)}}{t_{y}^{1/F(y)}} \right)^{F(s)}
$$
is strictly increasing it follows from \eqref{0903} that $t_{y}^{1/F(y)} = \tilde{t}_{y'}^{1/F(y')}$ which yields the conclusion.
\end{proof}

\begin{lemma}\label{l401}
Assume $M(Z)$ is at least 1-dimensional and that $ Z \subseteq {Y}$ is maximal. There is a unique element $t \in N_{\infty}$ satisfying the following two conditions:
\begin{enumerate}
\item\label{3.1} $t_{s} \neq 0$ if and only if $s \in Z$, and 
\item\label{3.2} $t_{s_{1}}^{1/F(s_{1})} = t_{s_{2}}^{1/F(s_{2})}$ for all $s_{1}, s_{2} \in Z$. 
\end{enumerate}
Furthermore, $\lim_{r \to \infty} e^{-\beta(r v)F(s)+rv \cdot c_s} = t_s$ for all $s \in Y$ when $v \in \mathrm{Int}(M(Z))$.
\end{lemma}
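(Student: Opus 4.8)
The plan is to establish the ``Furthermore'' clause first, since the limit vector it produces automatically lies in $N_{\infty}$ and therefore settles existence; uniqueness will then drop out of Lemma \ref{forhold}. Note that $Z \neq \emptyset$ and $M(Z) \neq \{0\}$ by the hypothesis that $M(Z)$ is at least $1$-dimensional, and that, because $Z$ is maximal, the description of $\mathrm{Int}(M(Z))$ in Lemma \ref{l204}(\ref{1.3}) is available.

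So fix $v \in \mathrm{Int}(M(Z))$ (necessarily $v \neq 0$) together with a reference vertex $z_{0} \in Z$, and set $\mu = v \cdot \frac{c_{z_{0}}}{F(z_{0})}$. From the defining equalities of $M(Z)$ we have $v \cdot c_{z} = \mu F(z)$ for every $z \in Z$, while Lemma \ref{l204}(\ref{1.3}) gives $\delta_{s} := \mu - v \cdot \frac{c_{s}}{F(s)} > 0$ for every $s \in Y \setminus Z$. For $r > 0$ write $\beta_{r} = \beta(rv)$ and $x_{r} = r\mu - \beta_{r}$. Substituting this into the equation $\sum_{s \in Y} e^{-\beta_{r} F(s) + r v \cdot c_{s}} = 1$ that defines $\beta(rv)$ (Lemma \ref{nl1}) and splitting off the vertices of $Z$ rewrites it as
\[
\sum_{z \in Z} e^{F(z) x_{r}} \ + \ \sum_{s \in Y \setminus Z} e^{F(s) x_{r}}\, e^{-F(s) r \delta_{s}} \ = \ 1 .
\]
Introduce $\phi(x) = \sum_{z \in Z} e^{F(z) x}$, which, since $F > 0$ and $Z \neq \emptyset$, is a strictly increasing homeomorphism of $\mathbb{R}$ onto $(0,\infty)$, and let $x_{\infty}$ be the unique solution of $\phi(x_{\infty}) = 1$.

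The heart of the argument is to show $x_{r} \to x_{\infty}$ as $r \to \infty$. The displayed identity gives $\phi(x_{r}) \leq 1$, hence $x_{r} \leq x_{\infty}$, hence $e^{F(s) x_{r}} \leq e^{F(s) x_{\infty}}$ for all $s$; therefore the ``off-$Z$'' sum is dominated by $\sum_{s \in Y \setminus Z} e^{F(s) x_{\infty}} e^{-F(s) r \delta_{s}}$, which tends to $0$ because every $\delta_{s} > 0$. Thus $\phi(x_{r}) \to 1$, and continuity of $\phi^{-1}$ on $(0,\infty)$ forces $x_{r} \to x_{\infty}$. Consequently $e^{-\beta_{r} F(z) + r v \cdot c_{z}} = e^{F(z) x_{r}} \to e^{F(z) x_{\infty}}$ for $z \in Z$, while the term for each $s \in Y \setminus Z$ tends to $0$. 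Define $t \in \Delta_{Y}$ by $t_{z} = e^{F(z) x_{\infty}}$ for $z \in Z$ and $t_{s} = 0$ otherwise; its coordinate sum is $\phi(x_{\infty}) = 1$, it satisfies \eqref{3.1} and \eqref{3.2} because $t_{z}^{1/F(z)} = e^{x_{\infty}}$ is independent of $z \in Z$, and it is precisely the claimed limit of $\big(e^{-\beta(rv) F(s) + r v \cdot c_{s}}\big)_{s \in Y}$. Running $r$ through the integers and invoking Lemma \ref{fremog} (note $\|rv\| \to \infty$) gives $\beta(rv) \to \infty$, so $t \in N_{\infty}$; this proves the ``Furthermore'' clause and existence.

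For uniqueness, suppose $t, \tilde{t} \in N_{\infty}$ both satisfy \eqref{3.1} and \eqref{3.2}. Fixing any $y \in Z$, condition \eqref{3.2} gives $t_{s}^{1/F(s)}/t_{y}^{1/F(y)} = 1 = \tilde{t}_{s}^{1/F(s)}/\tilde{t}_{y}^{1/F(y)}$ for $s \in Z$, while for $s \in Y \setminus Z$ both ratios vanish by \eqref{3.1}; Lemma \ref{forhold} then yields $t = \tilde{t}$, and in particular the limit in the ``Furthermore'' clause is the same for every $v \in \mathrm{Int}(M(Z))$. The only step that needs genuine care is the convergence $x_{r} \to x_{\infty}$, that is, the uniform-in-$r$ control of the exponentially small terms over $Y \setminus Z$; what makes this painless is that the bound $x_{r} \leq x_{\infty}$ comes for free from the defining equation.
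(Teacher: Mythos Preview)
Your proof is correct and is actually cleaner than the paper's argument, so it is worth spelling out the difference.

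The paper proceeds indirectly: it takes $v\in\mathrm{Int}(M(Z))$, uses Lemma~\ref{fremog} and compactness of $\Delta_Y$ to extract a subsequence $\{r_i v\}$ associated with some $t\in N_\infty$, and then appeals to ``as in the proof of Lemma~\ref{l12}'' (the convexity/interpolation trick) to upgrade subsequential convergence to full convergence along the ray. Properties \eqref{3.1} and \eqref{3.2} are then read off from Lemma~\ref{niceface} together with a small contradiction argument involving $\beta(rv)/r$. Uniqueness is proved by the same strictly-increasing-function observation that underlies Lemma~\ref{forhold}.

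Your route is a direct computation: the change of variable $x_r=r\mu-\beta_r$ collapses the defining equation for $\beta(rv)$ to $\phi(x_r)+\text{(exponentially small)}=1$, and the single bound $x_r\le x_\infty$ immediately controls the off-$Z$ terms uniformly in $r$. This gives the full limit without any subsequence extraction or reference to Lemma~\ref{l12}, yields an explicit formula $t_z=e^{F(z)x_\infty}$ (making the independence of $t$ from the particular $v\in\mathrm{Int}(M(Z))$ transparent, since $x_\infty$ depends only on $Z$), and bypasses Lemma~\ref{niceface} entirely. For uniqueness you invoke Lemma~\ref{forhold}, which is exactly the monotonicity argument the paper writes out by hand. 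The trade-off is that the paper's subsequence-plus-interpolation pattern is the same engine that drives Lemma~\ref{l12} and the later surjectivity arguments, so it reinforces a recurring theme; your argument is more self-contained but one-off.
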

\begin{proof} Let $v \in \mathrm{Int}(M(Z))$. Then $v \neq 0$ and $\lim_{r \to \infty} \beta(r v) = \infty$ by Lemma \ref{fremog}. Since $\Delta_{{Y}}$ is compact there is a subsequence $\{r_{i} v \}_{i \in \mathbb{N}}$ and an element $t \in N_{\infty}$ such that $\{r_{i} v \}_{i \in \mathbb{N}}$ is associated with $t$. It follows as in the proof of Lemma \ref{l12} that $\lim_{r \to \infty} e^{-\beta(r v)F(s)+rv \cdot c_s} = t_s$ for all $s \in Y$. To prove (1) notice that $r_{i} v \in M(Z)$ and hence $t_{z} \neq 0$ for $z \in Z$ by Lemma \ref{niceface}. Assume then that $t_{s} \neq 0 $ for a $s \in {Y}$, and assume for contradiction that $s \notin Z$. Fix a $y \in Z$. It follows from \eqref{1.3} of Lemma \ref{l204} that
\begin{equation}\label{0903a}
\frac{v \cdot c_s}{F(s)} \ < \ \frac{v \cdot c_y}{F(y)} .
\end{equation}
Since $t_{s} \neq 0$ and $\lim_{r \to \infty} \left(-\beta(rv)F(s)+rv \cdot c_s\right) = \log(t_{s})$, it follows that $\lim_{r\to \infty} \beta(rv)/r=v \cdot c_s/F(s)$. It follows then from (\ref{0903a}) that is a $L>0$ such that $\beta(rv)/r < v \cdot c_y /F(y)$ for all $r >L$. But this means that
$$
e^{-\beta(rv)F(y) +rv \cdot c_y}>1,
$$ 
when $r > L$, contradicting the definition of $\beta(rv)$, cf. Lemma \ref{nl1}. Hence (1) holds, and (2) follows from Lemma \ref{niceface}. For uniqueness, consider an element $t' \in \Delta_{{Y}}$ for which (1) and (2) hold. Let $z \in Z$ and note that 
$$
1= \sum_{s \in {Y}} t'_{s} = \sum_{s \in Z} t'_{s} = \sum_{s \in Z} {t'}_{z}^{F(s)/F(z)} =\sum_{s \in Z} {t}_{z}^{F(s)/F(z)} .
$$
Since the function $ ]0,\infty[ \ni x \to \sum_{s \in Z} x^{F(s)/F(y)}$ is strictly increasing it follows that $t_z = t'_z$, and hence that $t = t'$.
\end{proof}

\subsubsection{$H$ on the $1$-skeleton}

Let $M(Z)$ be a 1-dimensional polyhedral cone with $Z$ chosen maximal. Then $M(Z) \cap S^{n-1}$ consists only of one point $x$. We define $H(x)=t$, where $t\in N_{\infty}$ is the unique element obtained from Lemma \ref{l401} using $M(Z)$.


\subsubsection{$H$ on the $k$-skeleton} $H$ will be defined inductively and the basic idea is illustrated by the picture below. Everything inside the figure represents the intersection between $S^{2}$ and some $3$-dimensional polyhedral cone $M(Z)$. The green dots are the $1$-dimensional faces intersected with $S^{2}$, the blue lines (containing the green dots) are the $2$-dimensional faces intersected with $S^{2}$. Assuming that we have defined $H$ on the $2$-skeleton, we have defined $H$ on the blue lines and the green dots. As a step to define $H$ on the $3$-skeleton we first define $H$ on $c(M(Z))$ which lies in the interior $\mathrm{Int}(M(Z))$, and hence is not in the $2$-skeleton. Then for any $v \neq c(M(Z))$, we define $H$ on $v$ depending on $P(v)$ and how close it lies to the center $c(M(Z))$. We measure the distance to $c(M(Z))$ by using the unique decomposition obtained in Lemma \ref{opsplit}.

\begin{center}
\begin{tikzpicture}[scale=4]
\draw[dotted] (0,0) -- (0.7,0.41);
\draw[red,fill=red] (0,0) circle (.13ex);
\draw[green,fill=green] (1,0) circle (.13ex);
\draw[green,fill=green] (-0.8,0.59) circle (.13ex);
\draw[green,fill=green] (0.31,0.95) circle (.13ex);
\draw[green,fill=green] (0.31,0.95) circle (.13ex);
\draw[green,fill=green] (-0.8,-0.59) circle (.13ex);
\draw[green,fill=green] (0.31,-0.95) circle (.13ex);
\draw[black,fill=black] (0.2333,0.1367) circle (.13ex);
\draw[black,fill=black] (0.7,0.41) circle (.07ex);
  \foreach \x in {0,72,...,288} {
        \draw[fill, blue] (\x:1 cm) -- (\x + 72:1 cm);
        }
\node[text width=1cm, anchor=west, right] at (-0.3,-0.1) {$c(M(Z))$};
\node[text width=1cm, anchor=west, right] at (0.1,0.16) {$v$};
\node[text width=1cm, anchor=west, right] at (0.1,0.16) {$v$};
\node[text width=1cm, anchor=west, right] at (0.7,0.41) {$P(v)$};
\end{tikzpicture}
\end{center}


For the procedure to work we have to impose conditions at each step. We say that $H$ satisfies \emph{the induction conditions on the $l$-skeleton} if $H$ is defined on the $l$-skeleton and has the following properties.
\begin{enumerate}
\item\label{4.1} $H$ is continuous on the $l$-skeleton.
\item\label{4.2} If $x \in M(Z)\cap S^{n-1}$ with $\text{dim}(M(Z)) \leq l$, then there is a sequence $\{v_{m}\} \subseteq M(Z)$ associated with $H(x)$.
\item\label{4.3} For $x$ in the $l$-skeleton, let $y\in {Y}$ satisfy that $H(x)^{1/F(s)}_s \leq H(x)^{1/F(y)}_y \ \forall s \in Y$. Then $x \in \mathrm{Int}(M(Z))$ where 
$$
Z = \{s \in {Y} \ :  \ H(x)_{s}^{1/F(s)}=H(x)_{y}^{1/F(y)}\},
$$
and $Z$ is maximal for $M(Z)$.
\end{enumerate}

\begin{lemma}\label{obsH}
Assume $H$ satisfy the induction condition on the $l$-skeleton. Let $x \in M(Z)$ with $\text{dim}(M(Z))\leq l$ and $Z$ chosen maximal. Then $H(x)_{z}^{1/F(z)} \geq H(x)_{s}^{1/F(s)}$ for all $ s \in {Y} $ and $z \in Z$. In particular $H(x)_{z} \neq 0$ for $z \in Z$.
\end{lemma}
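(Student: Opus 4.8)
The plan is to deduce this directly from the induction hypothesis together with Lemma~\ref{niceface}. Since $x$ belongs to the $l$-skeleton and $x\in M(Z)$ with $\dim(M(Z))\le l$, condition~\eqref{4.2} of the induction conditions on the $l$-skeleton supplies a sequence $\{v_m\}\subseteq M(Z)$ that is associated with $H(x)$. Because $H$ takes values in $N_{\infty}$, we may then apply Lemma~\ref{niceface} to the element $t=H(x)\in N_{\infty}$ and this associated sequence, every term of which lies in $M(Z)$.

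Part~\eqref{2.1} of Lemma~\ref{niceface} reads exactly $H(x)_{s}^{1/F(s)}\le H(x)_{z}^{1/F(z)}$ for all $s\in Y$ and all $z\in Z$, which is the first assertion. Part~\eqref{2.2} gives $H(x)_{z}\neq 0$ for every $z\in Z$, which is the ``in particular'' part; alternatively, if some $H(x)_{z}$ vanished then the inequality just established would force $H(x)_{s}=0$ for all $s\in Y$, contradicting $\sum_{s\in Y}H(x)_{s}=1$ since $H(x)\in\Delta_{Y}$.

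I do not expect any genuine obstacle here: the statement is an immediate corollary of the two prior results, essentially a bookkeeping step recording a consequence of the induction conditions that will be used repeatedly in what follows. The only minor points to keep in mind are that ``$x\in M(Z)$'' is read within the standing convention that $x$ lies in the $l$-skeleton, so that $x\in S^{n-1}$ and condition~\eqref{4.2} is applicable, and that the maximality of $Z$ is inessential to the argument itself — for a non-maximal $Z$ the conclusion would follow from the same statement applied to the maximal enlargement $Z'$, since $M(Z)=M(Z')$ and $Z\subseteq Z'$.
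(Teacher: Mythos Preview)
Your proof is correct and follows exactly the same approach as the paper: the paper's proof simply says ``Combine induction condition \eqref{4.2} with Lemma \ref{niceface},'' which is precisely what you do. Your additional remarks about the inessentiality of maximality and the alternative argument for nonvanishing are accurate but not needed.
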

\begin{proof} Combine induction condition \eqref{4.2} with Lemma \ref{niceface}.
\end{proof}

Notice that $H$ satisfies the induction conditions on the $1$-skeleton. For the induction step, assume that we have defined $H$ on the $(k-1)$-skeleton for some $k>1$ and that $H$ satisfies the induction conditions on the $(k-1)$-skeleton.

Now we will define $H$ on the $k$-skeleton. So let $M(Z) $ be a polyhedral cone of dimension $k$ and let $Z$ be chosen maximal. An element $v \in \mathrm{Int}(M(Z))$ does not lie in the $k-1$-skeleton by \eqref{1.3} of Lemma \ref{l204}, while a $v \in \mathrm{bd}(M(Z)) \cap S^{n-1}$ does, so we want to define $H$ on $\mathrm{Int}(M(Z)) \cap S^{n-1}$. Since $M(Z)$ is a strongly convex polyhedral cone we can consider $c(M(Z)) \in \mathrm{Int}(M(Z)) \cap S^{n-1}$, and set $H(c(M(Z)))=t$ where $t$ is the unique element arising from Lemma \ref{l401} using $M(Z)$.

\begin{lemma}\label{kdef1403}
 For any $v \in \left(S^{n-1}\cap M(Z)\right) \backslash \{c(M(Z))\}$ write
$$
v = \frac{(1-\lambda) c(M(Z))+\lambda P(v)}{\lVert (1-\lambda) c(M(Z))+\lambda P(v) \rVert},
$$
where $\lambda \in ]0,1]$ and $P(v) \in \mathrm{bd}(M(Z))$ are unique, cf. Lemma \ref{opsplit}. There is a unique $t \in N_{\infty}$ such that
\begin{equation}\label{33}
t_{s}^{1/F(s)} = t_{z}^{1/F(z)} \frac{H(P(v))_{s}^{1/F(s)}}{H(P(v))_{z}^{1/F(z)}} \exp\left(-\log(\lambda)c(M(Z)) \cdot \left(\frac{c_s}{F(s)}-\frac{c_z}{F(z)} \right)  \right)
\end{equation}
for all $z \in Z$ and $s \in Y$. Furthermore, the following hold:
\begin{enumerate}
\item\label{6.1} For any $\{v_{m}\}$ associated with $H(P(v))$ and any subsequence of $\{v_{m}-\log(\lambda)c(M(Z))\}$ associated to some $t' \in N_{\infty}$, we have that $t=t'$.
\item\label{6.2} $t_{z}^{1/F(z)} \geq t_{s}^{1/F(s)}$ for all $s \in {Y}$ and all $z \in Z$.
\item\label{6.4} There is a sequence $\{w_{m}\} \subseteq M(Z)$ associated with $t$.
\item\label{6.5} Assume that $v \in \mathrm{Int}(M(Z))$ and let $z \in Z$. Then $\{ s \in {Y} \ : \ t_{s}^{1/F(s)}=t_{z}^{1/F(z)}\} =Z$.
\end{enumerate}
\end{lemma}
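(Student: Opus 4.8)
The plan is to produce $t$ as a subsequential limit assembled from the induction data, and then to extract properties \eqref{6.1}--\eqref{6.5} in the course of the construction; all the uniqueness claims will reduce to Lemma~\ref{forhold}. For a vector $w\in\mathbb R^n$ write $\phi_s(w)=-\beta(w)F(s)+w\cdot c_s$, so that $\big(e^{\phi_s(w)}\big)_{s\in Y}$ is the point of $N_{\beta(w)}$ determined by $w$.

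First I would note that $P(v)\in\mathrm{bd}(M(Z))\cap S^{n-1}$ lies in a proper face of $M(Z)$, hence in a polyhedral cone of dimension at most $k-1$, so $H(P(v))$ is already defined and, by induction condition \eqref{4.2}, there is a sequence $\{v_m\}$ inside that face --- in particular $\{v_m\}\subseteq M(Z)$ --- associated with $H(P(v))$; Lemma~\ref{niceface} applied to it gives $H(P(v))_z\neq 0$ and $H(P(v))_s^{1/F(s)}\le H(P(v))_z^{1/F(z)}$ for all $s\in Y$, $z\in Z$. Put $a=-\log\lambda\ge 0$ and $w_m=v_m+a\,c(M(Z))$. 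Since $M(Z)$ is a convex cone, $w_m\in M(Z)$; since $\beta(v_m)\to\infty$ we have $\|v_m\|\to\infty$ by Lemma~\ref{fremog}, hence $\|w_m\|\to\infty$ and then $\beta(w_m)\to\infty$, again by Lemma~\ref{fremog}, while $w_m\in Q(\beta(w_m))$ by the definition of $\beta(\cdot)$. By compactness of $\Delta_Y$ I pass to a subsequence along which $\big(e^{\phi_s(w_m)}\big)_{s\in Y}$ converges to some $t\in N_\infty$; this $\{w_m\}\subseteq M(Z)$ is then associated with $t$, which is \eqref{6.4}, and Lemma~\ref{niceface} applied to it gives $t_z\neq 0$, $t_z^{1/F(z)}=t_y^{1/F(y)}$ for all $z,y\in Z$, and $t_s^{1/F(s)}\le t_z^{1/F(z)}$ for all $s\in Y$, which is \eqref{6.2}.

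The crucial point is the identity, valid for $z\in Z$ and $s\in Y$, asserting that $\phi_s(w_m)/F(s)-\phi_z(w_m)/F(z)$ differs from $\phi_s(v_m)/F(s)-\phi_z(v_m)/F(z)$ by the $m$-independent constant $a\,c(M(Z))\cdot\big(c_s/F(s)-c_z/F(z)\big)$; here the unknown normalisations $\beta(w_m),\beta(v_m)$ cancel. Letting $m\to\infty$ we have $e^{\phi_s(v_m)}\to H(P(v))_s$, $e^{\phi_z(v_m)}\to H(P(v))_z\neq 0$ and $e^{\phi_z(w_m)}\to t_z\neq 0$; hence, when $H(P(v))_s\neq 0$, the quantity $\phi_s(w_m)/F(s)$ converges to a finite limit and exponentiating the limiting identity yields exactly \eqref{33}, while when $H(P(v))_s=0$ both sides of \eqref{33} vanish. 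For uniqueness inside $N_\infty$: if $\tilde t\in N_\infty$ also satisfies \eqref{33}, fixing $z\in Z$ forces $\tilde t_z\neq 0$ (otherwise \eqref{33} would make every coordinate of $\tilde t$ vanish, contradicting $\sum_s\tilde t_s=1$), so $t_s^{1/F(s)}/t_z^{1/F(z)}=\tilde t_s^{1/F(s)}/\tilde t_z^{1/F(z)}$ for all $s\in Y$ and Lemma~\ref{forhold} gives $t=\tilde t$. Since the displayed cancellation and the limit argument go through verbatim for an arbitrary sequence associated with $H(P(v))$ and an arbitrary convergent subsequence of its shift by $-\log(\lambda)c(M(Z))$, this uniqueness is precisely \eqref{6.1}.

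For \eqref{6.5}, assume $v\in\mathrm{Int}(M(Z))$; then in the decomposition of Lemma~\ref{opsplit} one has $\lambda\in\,]0,1[$, so $a=-\log\lambda>0$. The inclusion $Z\subseteq\{s\in Y:t_s^{1/F(s)}=t_z^{1/F(z)}\}$ is contained in \eqref{6.2}. For the reverse inclusion, take $s\in Y\setminus Z$: since $c(M(Z))\in\mathrm{Int}(M(Z))$ and $Z$ is maximal, \eqref{1.3} of Lemma~\ref{l204} gives $c(M(Z))\cdot\big(c_s/F(s)-c_z/F(z)\big)<0$, so the exponential factor in \eqref{33} is strictly less than $1$; combined with $H(P(v))_s^{1/F(s)}/H(P(v))_z^{1/F(z)}\le 1$ this forces $t_s^{1/F(s)}<t_z^{1/F(z)}$, so $s$ is excluded. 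I expect the main obstacle to be the limit analysis of the third paragraph: keeping careful track of which of the quantities $\phi_s(v_m)$, $\phi_s(w_m)$ stay bounded and which tend to $-\infty$, and confirming that the limiting vector genuinely lies in $N_\infty$ and not merely in $\Delta_Y$. Once that is settled, the uniqueness half follows immediately from Lemma~\ref{forhold}.
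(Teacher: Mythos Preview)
Your proof is correct and follows essentially the same approach as the paper: construct $t$ as a subsequential limit of the shifted sequence $w_m=v_m-\log(\lambda)c(M(Z))$, exploit the cancellation of $\beta(\cdot)$ in the differences $\phi_s/F(s)-\phi_z/F(z)$ to obtain \eqref{33}, and invoke Lemma~\ref{forhold} for uniqueness; the remaining properties then fall out as you indicate.

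One small remark on your ``verbatim'' claim for \eqref{6.1}: for the specific sequence you built, $t_z\neq 0$ came from Lemma~\ref{niceface} via $w_m\in M(Z)$, but for an arbitrary $\{v_m\}$ associated with $H(P(v))$ the shifted sequence need not lie in $M(Z)$, so that route is unavailable. The paper instead picks some $s$ with $t'_s\neq 0$ and reads off $t'_z\neq 0$ from the multiplicative identity (the right-hand side is bounded because $H(P(v))_z>0$). Your own uniqueness paragraph already contains the fix---working multiplicatively, the limiting relation $(t'_s)^{1/F(s)}=(t'_z)^{1/F(z)}\cdot R_s$ holds regardless, and then $\sum_s t'_s=1$ forces $t'_z\neq 0$---so the argument is sound, just not literally identical to the one you gave for the constructed $t$.
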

\begin{proof} It follows from \eqref{4.2} of the induction conditions that there is a sequence $\{v_{m}\}$ which is associated with $H(P(v))$. By considering a sub-sequence we can assume that $\{v_{m}-\log(\lambda)c(M(Z))\}$ is associated with some $t \in N_{\infty}$. Choose $Z'$ maximal such that $P(v) \in M(Z')$ with $\text{dim}(M(Z')) \leq k-1$. Then $Z \subseteq Z'$ and Lemma \ref{obsH} implies that $H(P(v))_z \neq 0$ for all $z \in Z$. Hence
\begin{align*}
&\frac{H(P(v))_{s}^{1/F(s)}}{H(P(v))_{z}^{1/F(z)}} = \lim_{m} \left(e^{-\beta(v_{m})F(s)+v_{m}\cdot c_s}\right)^{1/F(s)} \left(e^{-\beta(v_{m})F(z)+v_{m}\cdot c_z}\right)^{-1/F(z)} \\
&=\lim_{m} \exp\left(v_{m}\cdot \left(\frac{c_s}{F(s)}   -  \frac{c_z}{F(z
)}\right)\right)
\end{align*}
for all $z \in Z$, $s \in Y$. Set $w_m =v_{m}-\log(\lambda)c(M(Z))$, and note that
\begin{align*}
&\left(e^{-\beta(w_{m})F(s)+w_{m}\cdot c_s}\right)^{1/F(s)} \left(e^{-\beta(w_{m})F(z)+ w_{m}\cdot c_z}\right)^{-1/F(z)} \\
&=\exp\left(-\log(\lambda)c(M(Z))\cdot \left(\frac{c_s}{F(s)}   -  \frac{c_z}{F(z)}\right)\right)  \exp\left(v_{m}\cdot \left(\frac{c_s}{F(s)}   -  \frac{c_z}{F(z)}\right)\right) 
\end{align*}
for all $s \in Y$ and $z \in Z$. Considering the limit $m \to \infty$ in the last equation for a $s \in Y$ with $t_{s} \neq 0$ and a $z\in Z$, we see that $t_{z} \neq 0$ for $z \in Z$. Hence for all $s \in Y$, $z \in Z$ we get \eqref{33} by combining the two equations,
proving the existence of $t$. The uniqueness follows from Lemma \ref{forhold} since \eqref{33} implies that $t_z \neq 0$ for all $z \in Z$. To establish the additional properties, notice that $(1)$ follows from the above. \eqref{6.4} follows from \eqref{6.1} and the definition of $w_m$ since we can choose $v_m \in M(Z)$, and \eqref{6.2} follows from \eqref{6.4} and Lemma \ref{niceface}. The inclusion $"\supseteq"$ in \eqref{6.5} follows from \eqref{6.2}. For the opposite inclusion in \eqref{6.5}, observe that $c(M(Z)) \cdot (c_s/F(s)-c_z/F(z))<0$ for $s \notin Z$ by \eqref{1.3} of Lemma \ref{l204}. Furthermore, $\log(\lambda) < 0$ since $v \in \mathrm{Int}(M(Z))$ and it follows from Lemma \ref{obsH} that $H(P(v))_{s}^{1/F(s)} \leq H(P(v))_{z}^{1/F(z)}$. Hence \eqref{33} shows that $t_{s}^{1/F(s)}/t_{z}^{1/F(z)}\neq 1$ for $s \notin Z$.

\end{proof}

For $v \in \left(S^{n-1}\cap \mathrm{Int}(M(Z))\right) \backslash \{c(M(Z))\}$ we set $H(v) = t$, where $t \in N_{\infty}$ is the element determined by Lemma \ref{kdef1403}.

\begin{lemma} \label{contk}
$H$ is continuous on $M(Z)\cap S^{n-1}$.
\end{lemma}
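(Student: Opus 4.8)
\emph{Proof plan.} The plan is to prove that $H$ is sequentially continuous at every point of the compact metric space $M(Z)\cap S^{n-1}$, treating separately the three cases in which the point lies in $\mathrm{Int}(M(Z))\cap S^{n-1}$ but differs from $c(M(Z))$, equals $c(M(Z))$, or lies in $\mathrm{bd}(M(Z))\cap S^{n-1}$. The tools are formula \eqref{33}; the continuity of $v\mapsto(\lambda_v,P(v))$ from Lemma \ref{CONT0}; induction condition \eqref{4.1}, which makes $H$ continuous on the $(k-1)$-skeleton and in particular on $\mathrm{bd}(M(Z))\cap S^{n-1}$; and the rigidity Lemma \ref{forhold}, used to identify subsequential limits: a subsequential limit $t'\in\Delta_Y$ of $\{H(v_m)\}$ automatically has $t'_z\neq0$ for $z\in Z$ (otherwise $t'_s=0$ for all $s$, contradicting $\sum_s t'_s=1$), so it is pinned down once its ratio vector $\big(t'^{1/F(s)}_s/t'^{1/F(z)}_z\big)_{s\in Y}$ is known.

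Consider first $v\in\mathrm{Int}(M(Z))\cap S^{n-1}$ with $v\neq c(M(Z))$ and a sequence $v_m\to v$ in $M(Z)\cap S^{n-1}$; since $\mathrm{Int}(M(Z))$ is relatively open in $M(Z)$ we have $v_m\in\mathrm{Int}(M(Z))\cap S^{n-1}$ eventually, so $H(v_m)$ is given by \eqref{33}. Here $\lambda_v\in\,]0,1[$, so by Lemma \ref{CONT0} $\lambda_{v_m}\to\lambda_v$ and $P(v_m)\to P(v)\in\mathrm{bd}(M(Z))\cap S^{n-1}$; by induction condition \eqref{4.1} $H(P(v_m))\to H(P(v))$, and $H(P(v))_z\neq0$ for $z\in Z$ by Lemma \ref{obsH}. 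Hence the right-hand side of \eqref{33} evaluated along $v_m$ converges to its value at $v$, i.e. the ratio vector of $H(v_m)$ converges to that of $H(v)$, and the rigidity argument above gives $H(v_m)\to H(v)$. For a boundary point $w$, continuity from within $\mathrm{bd}(M(Z))$ is induction condition \eqref{4.1}; if instead $v_m\to w$ with $v_m\in\mathrm{Int}(M(Z))$, note that the unique Lemma \ref{opsplit} decomposition of $w$ is the trivial one $\lambda_w=1$, $P(w)=w$, so Lemma \ref{CONT0} gives $\lambda_{v_m}\to1$ and $P(v_m)\to w$; then in \eqref{33} the exponential factor tends to $1$ and $H(P(v_m))\to H(w)$, and the rigidity argument again yields $H(v_m)\to H(w)$. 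A general sequence converging to $w$ splits into these two types.

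The essentially different case is continuity at $c(M(Z))$. Given $v_m\to c(M(Z))$ with $v_m\neq c(M(Z))$, one has $v_m\in\mathrm{Int}(M(Z))\cap S^{n-1}$ eventually, and I would first show $\lambda_{v_m}\to0$: if some subsequence had $\lambda_{v_m}\to\lambda_0>0$, then after passing to a further subsequence $P(v_m)\to u_0\in\mathrm{bd}(M(Z))\cap S^{n-1}$, and $v_m$ would converge to the normalized vector $\tfrac{(1-\lambda_0)c(M(Z))+\lambda_0u_0}{\lVert(1-\lambda_0)c(M(Z))+\lambda_0u_0\rVert}$ (the denominator being nonzero by strong convexity of $M(Z)$), which must equal $c(M(Z))$; this forces $u_0$ to be a scalar multiple of $c(M(Z))$, impossible since $u_0\in\mathrm{bd}(M(Z))$ rules out $u_0=c(M(Z))$ and strong convexity rules out $u_0=-c(M(Z))$. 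Now fix $z\in Z$ and use \eqref{33}: for $s\in Z$ the factor $c(M(Z))\cdot(c_s/F(s)-c_z/F(z))$ vanishes and, by \eqref{2.2} of Lemma \ref{niceface} applied via induction condition \eqref{4.2}, $H(P(v_m))_s^{1/F(s)}=H(P(v_m))_z^{1/F(z)}$; for $s\notin Z$ one has $c(M(Z))\cdot(c_s/F(s)-c_z/F(z))<0$ by \eqref{1.3} of Lemma \ref{l204} and $H(P(v_m))_s^{1/F(s)}/H(P(v_m))_z^{1/F(z)}\leq1$ by Lemma \ref{obsH}, while $-\log\lambda_{v_m}\to+\infty$, so the right-hand side tends to $0$. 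Hence $H(v_m)_s^{1/F(s)}/H(v_m)_z^{1/F(z)}\to1$ for $s\in Z$ and $\to0$ for $s\notin Z$; writing $a_m=H(v_m)_z^{1/F(z)}\in\,]0,1]$ and using $1=\sum_s H(v_m)_s$, the strict monotonicity of $x\mapsto\sum_{s\in Z}x^{F(s)}$ forces $a_m$ to converge to the unique positive root $a^\ast$ of $\sum_{s\in Z}x^{F(s)}=1$, and then $H(v_m)_s\to(a^\ast)^{F(s)}$ for $s\in Z$ and $\to0$ for $s\notin Z$. This limit is exactly the vector $H(c(M(Z)))$ produced by Lemma \ref{l401}, so $H(v_m)\to H(c(M(Z)))$.

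I expect the last case to be the main obstacle: away from $c(M(Z))$ continuity is obtained by feeding the continuity of the already-constructed $H$ on $\mathrm{bd}(M(Z))$ through the explicit formula \eqref{33}, but at $c(M(Z))$ the formula degenerates — the weights indexed by $s\notin Z$ collapse to $0$ — and convergence must be recovered indirectly from the normalization $\sum_s H(v_m)_s=1$ rather than read off \eqref{33} directly.
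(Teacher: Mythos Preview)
Your proof is correct and follows essentially the same strategy as the paper: feed the continuity of $(\lambda_v,P(v))$ from Lemma~\ref{CONT0} and of $H$ on the $(k-1)$-skeleton through formula~\eqref{33}, and at the center use that $-\log\lambda_{v_m}\to+\infty$ kills the $s\notin Z$ coordinates while Lemma~\ref{l401} pins down the limit. The only cosmetic differences are that the paper handles the interior and boundary cases for $x\neq c(M(Z))$ in one stroke (since Lemma~\ref{kdef1403} is stated on all of $(M(Z)\cap S^{n-1})\setminus\{c(M(Z))\}$), and that the paper passes to a subsequential limit $u$ and identifies it via uniqueness in Lemmas~\ref{kdef1403} and~\ref{l401}, whereas you compute the coordinates of the limit directly from the normalization $\sum_s H(v_m)_s=1$; your explicit verification that $\lambda_{v_m}\to 0$ at the center is a point the paper simply asserts.
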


\begin{proof} Let $\{x_m\}$ be a sequence such that $\lim_m x_m = x $ in $M(Z)\cap S^{n-1}$. To prove that $\lim_{m \to \infty} H(x_m) = H(x)$, consider a subsequence $\{m_i\}$ such that $\lim_{i \to \infty} H(x_{m_i}) = u$ in $N_{\infty}$. It suffices to show that $u = H(x)$. Assume first that $x \neq c(M(Z))$. By using Lemma \ref{CONT0} it follows that
$$
x = \frac{(1-\lambda)c(M(Z)) + \lambda P(x)}{\left\|(1-\lambda)c(M(Z)) + \lambda P(x)\right\|},
$$
where $\lambda = \lim_{i \to \infty} \lambda_{x_{m_i}}$, and from \eqref{33} by using the continuity of $H$ on the $(k-1)$-skeleton, that
\begin{equation*}
u_{s}^{1/F(s)} = u_{z}^{1/F(z)} \frac{H(P(x))_{s}^{1/F(s)}}{H(P(x))_{z}^{1/F(z)}} \exp\left(-\log(\lambda)c(M(Z)) \cdot \left(\frac{c_s}{F(s)}-\frac{c_z}{F(z)} \right)  \right)
\end{equation*}
for all $s \in Y$ and all $z \in Z$. Hence $u = H(x)$ by uniqueness in Lemma \ref{kdef1403}. Assume then that $x = c(M(Z))$. We may then assume that $x_{m_i} \neq c(M(Z))$ for all $i$, and \eqref{6.2} in Lemma \ref{kdef1403} then implies that $u_{z}^{1/F(z)} \geq u_{s}^{1/F(s)}$ for all $s \in {Y}$ and all $z \in Z$. Hence $u_{s_{1}}^{1/F(s_{1})} = u_{s_{2}}^{1/F(s_{2})}$ when $s_1,s_2 \in Z$. Let $s \in Y \backslash Z$. To conclude from Lemma \ref{l401} that $u = H(c(M(Z)))$ we need only show that $u_s = 0$. Let $z \in Z$. Then 
$$
\lim_{i \to \infty} \frac{H(x_{m_i})_s^{1/F(s)}}{H(x_{m_i})_z^{1/F(z)}} =  \frac{u_s^{1/F(s)}}{u_z^{1/F(z)}}
$$
while
$$
\frac{H\left(\left(P(x_{m_i})\right)\right)_s^{1/F(s)}}{H\left(\left(P(x_{m_i})\right)\right)_z^{1/F(z)}} \leq 1
$$
for all $i$ by Lemma \ref{obsH}. Furthermore, as observed in the proof of Lemma \ref{kdef1403}, $c(M(Z)) \cdot (c_s/F(s)-c_z/F(z))<0$ since $s \notin Z$. Since $\lim_{i \to \infty} \lambda_{x_{m_i}} = 0$ it follows that
$$
 \lim_{i \to \infty} \exp\left((-\log(\lambda_{x_{m_i}}))c(M(Z)) \cdot \left(\frac{c_s}{F(s)}-\frac{c_z}{F(z)} \right)  \right) = 0 .
 $$
 Therefore, by inserting $x_{m_i}$ for $v$ in \eqref{33} and taking the limit $i \to \infty$, it follows that $u_s =0$ as desired.
\end{proof}

We now have a continuous function $H:S^{n-1} \cap M(Z) \to N_{\infty}$ for every $k$-dimensional polyhedral cone $M(Z)$. If $M(Z)$ and $M(Z')$ are two distinct $k$-dimensional polyhedral cones such that $M(Z)\cap M(Z') \cap S^{n-1} \neq \emptyset$, the elements of $M(Z) \cap M(Z') \cap S^{n-1}$ will lie in the $(k-1)$-skeleton and hence the two $H$-functions, one arising from $M(Z)$ and the other from $M(Z')$, will agree on $M(Z) \cap M(Z') \cap S^{n-1}$. In this way we have a well-defined function $H$ defined on the $k$-skeleton. $H$ satisfies the induction condition \eqref{4.1} by Lemma \ref{contk}, \eqref{4.2} by \eqref{6.4} in Lemma \ref{kdef1403} and \eqref{4.3} by \eqref{6.5} and \eqref{6.2} in Lemma \ref{kdef1403}. Since the $n$-skeleton is all of $S^{n-1}$ by Lemma \ref{nskel}, it follows that we have defined a continuous map $H : S^{n-1} \to N_{\infty}$.

\subsubsection{$H$ is a homeomorphism} It remains to show that $H$ is injective and surjective.

\begin{lemma} \label{contk2}
$H : S^{n-1} \to N_{\infty}$ is injective.
\end{lemma}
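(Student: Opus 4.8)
The plan is to establish injectivity by induction on the dimension of a polyhedral cone canonically attached to a point of $N_{\infty}$. Suppose $H(v) = H(v') = t$. Since $H$ satisfies the induction condition \eqref{4.3} on all of $S^{n-1}$, both $v$ and $v'$ lie in $\mathrm{Int}(M(Z)) \cap S^{n-1}$, where $Z = \{ s \in Y : t_s^{1/F(s)} = \max_{y \in Y} t_y^{1/F(y)} \}$ is maximal for $M(Z)$; note that $Z$ depends only on $t$. It therefore suffices to prove, by induction on $k = \dim M(Z)$, that $H$ restricted to $\mathrm{Int}(M(Z)) \cap S^{n-1}$ is injective whenever $Z$ is maximal with $\dim M(Z) = k$. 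When $k = 1$ this set is a single point and there is nothing to prove, so we assume $k \geq 2$.

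First suppose one of the two points, say $v$, equals the centre $c(M(Z))$. Then $t = H(c(M(Z)))$ is the element furnished by Lemma \ref{l401}, so $\supp(t) = Z$ by part \eqref{3.1} of that lemma. If $v' \neq c(M(Z))$, then $t$ is given by formula \eqref{33} with $P(v') \in \mathrm{bd}(M(Z))$; since $t_z \neq 0$ and $H(P(v'))_z \neq 0$ for $z \in Z$ (the latter by applying Lemma \ref{obsH} to the maximal face through $P(v')$, which contains $Z$ by \eqref{1.1} of Lemma \ref{l204}), inspection of \eqref{33} gives $\supp(t) = \supp(H(P(v')))$. But $P(v')$ lies in the relative interior of a proper face $M(Z_1)$ of $M(Z)$ with $Z \subsetneq Z_1$ maximal, so Lemma \ref{obsH} forces $\supp(H(P(v'))) \supseteq Z_1 \supsetneq Z$, contradicting $\supp(t) = Z$. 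Hence $v' = c(M(Z)) = v$ in this case.

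Now assume $v, v' \neq c(M(Z))$. By Lemma \ref{opsplit} we may write $v$ and $v'$, up to normalisation, as $(1-\lambda)c(M(Z)) + \lambda P(v)$ and $(1-\lambda')c(M(Z)) + \lambda' P(v')$, with $\lambda, \lambda' \in {]0,1[}$ and $P(v), P(v') \in \mathrm{bd}(M(Z)) \cap S^{n-1}$. Fix $z \in Z$ and set $a_s = c(M(Z)) \cdot \left( \tfrac{c_s}{F(s)} - \tfrac{c_z}{F(z)} \right)$, so that $a_s = 0$ for $s \in Z$ and $a_s < 0$ for $s \notin Z$ by \eqref{1.3} of Lemma \ref{l204} (and $a_s$ is independent of the choice of $z \in Z$). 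Equating the two expressions for $t_s^{1/F(s)}$ given by \eqref{33} yields
$$
H(P(v))_s^{1/F(s)} = D \, e^{\mu a_s} \, H(P(v'))_s^{1/F(s)} \qquad (s \in Y),
$$
where $D = H(P(v))_z^{1/F(z)} / H(P(v'))_z^{1/F(z)} > 0$ and $\mu = \log\lambda - \log\lambda'$. Raising to the power $F(s)$, summing over the common support $S$ of $H(P(v))$ and $H(P(v'))$ (which equals $\supp(t)$ by \eqref{33}, and strictly contains $Z$), and using $\sum_{s} H(P(v))_s = \sum_{s} H(P(v'))_s = 1$, we obtain
$$
\sum_{s \in S} \left( D^{F(s)} e^{\mu a_s F(s)} - 1 \right) H(P(v'))_s = 0 .
$$

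The crux is to deduce $D = 1$ and $\mu = 0$ from this identity. If $D \geq 1$ and $\mu \leq 0$, or $D \leq 1$ and $\mu \geq 0$, then every summand has a fixed sign, hence all vanish; a term with $s \in Z$ forces $D = 1$, and then a term with $s \in S \setminus Z$ forces $\mu = 0$ since $a_s \neq 0$. The remaining patterns $D > 1, \mu > 0$ and $D < 1, \mu < 0$ are excluded by a geometric argument: if $D > 1$ and $\mu > 0$, let $Z_1$ be the maximal face through $P(v)$, so $Z \subsetneq Z_1$, $\dim M(Z_1) < k$, and $H(P(v))_s^{1/F(s)}$ attains its maximum exactly on $Z_1$ by \eqref{4.3}; for $s \in Z_1$ this gives, using $a_z = 0$, that $H(P(v'))_z^{1/F(z)} = e^{\mu a_s} H(P(v'))_s^{1/F(s)}$, and since $z \in Z$ also lies in the max-set of $H(P(v'))$ we have $e^{\mu a_s} \geq 1$, i.e. $\mu a_s \geq 0$; as $\mu > 0$ and $a_s \leq 0$ this forces $a_s = 0$, whence $s \in Z$, so $Z_1 \subseteq Z$, contradicting $Z \subsetneq Z_1$. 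The case $D < 1, \mu < 0$ follows by swapping $v \leftrightarrow v'$. With $D = 1$ and $\mu = 0$ we obtain $H(P(v)) = H(P(v'))$, hence $P(v) = P(v')$ by the inductive hypothesis applied to the cone $M(Z_1)$ of dimension $< k$, and $\mu = 0$ gives $\lambda = \lambda'$, so $v = v'$. The step I expect to be the main obstacle is precisely the exclusion of the two mixed-sign cases: the probability-vector identity alone does not rule them out, and one must invoke the interior/boundary dichotomy together with the strict interiority of the centre $c(M(Z))$; one must also keep careful track at each stage that $\supp(t) = \supp(H(P(v)))$ and that the maximal face through $P(v)$ strictly contains $Z$, so that the induction on $\dim M(Z)$ terminates.
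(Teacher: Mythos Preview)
Your proof is correct and follows the same inductive framework as the paper: reduce to a single cone $M(Z)$ via induction condition \eqref{4.3}, dispose of the centre by a support count, and otherwise compare $H(P(v))$ and $H(P(v'))$ through formula \eqref{33}.

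The paper's argument is more economical at the crux. Instead of introducing the pair $(D,\mu)$ and a four-way case split, it simply normalises to $\lambda_{x_1}\le\lambda_{x_2}$, picks a single index $s_2$ in the max-set $Z'_2$ of $H(P(x_2))$ with $s_2\notin Z_0$, and reads off from \eqref{e1703} at $s=s_2$ that the left side is a product of two factors each $\le 1$ (one by Lemma \ref{obsH}, one since $a_{s_2}<0$) while the right side equals $1$; hence both factors equal $1$, giving $\lambda_{x_1}=\lambda_{x_2}$ in one stroke, after which Lemma \ref{forhold} yields $H(P(x_1))=H(P(x_2))$. This is exactly your ``geometric argument'' for the mixed-sign cases, and it works uniformly in $D$: the sum-to-zero identity you use for the two aligned cases is unnecessary. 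In other words, the step you flag as the main obstacle is in fact the whole argument; the probability-vector identity is a detour.
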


\begin{proof}
 Note first that it follows from Lemma \ref{l401} that $H$ is injective on the $1$-skeleton. Assume then that $H$ is injective on the $(k-1)$-skeletion, $k \leq n$. Let $x_1,x_2$ be elements in the $k$-skeleton such that $H(x_1) = H(x_2)$. Choose $y \in Y$ such that $H(x_1)_s^{1/F(s)} \leq H(x_1)_y^{1/F(y)}$ for all $s \in Y$. Set 
$$
Z_0 = \left\{ s \in Y: \ H(x_1)_s^{1/F(s)} = H(x_1)_y^{1/F(y)} \right\}.
$$
It follows from the induction condition \eqref{4.3} that $x_1,x_2 \in \Int(M(Z_0))$ and that $Z_0$ is maximal for $M(Z_0)$. If $\dim (M(Z_0)) < k$ it follows from the induction hypothesis that $x_1=x_2$ so we assume that $\dim (M(Z_0)) =k$. If $x_i \neq c(M(Z_0))$ it follows from \eqref{33} that there is an $s \notin Z_0$ such that $H(x_i)_s \neq 0$. Indeed, $P(x_i) \in M(Z')$ with $Z_0 \subsetneq Z'$ and by (3) from the induction conditions $H(P(x_i))_s \neq 0$ when $s \in Z'$. It follows from \eqref{33} that $H(x_i)_s \neq 0$ when $s \in Z' \backslash Z_0$. Therefore, if $H(x_1)_s = 0$ for all $s\notin Z_0$ it follows that $x_1 = x_2 = c(M(Z_0))$. We may therefore assume that $x_i \neq c(M(Z_0)), \ i = 1,2$.  
Note that there is a face $M(Z'_i) \subseteq M(Z_0)$ such that $Z_0 \subsetneq Z'_i$ and $P(x_i) \in M(Z'_i)$. Then $Z_0 \subseteq Z'_1 \cap Z'_2$ and in particular, $y \in Z'_1 \cap Z'_2$.  By symmetry we may assume that $\lambda_{x_1} \leq \lambda_{x_2}$. It follows then from \eqref{33} that
\begin{equation}\label{e1703}
 \frac{H(P(x_1))_{s}^{1/F(s)}}{H(P(x_1))_{y}^{1/F(y)}} \exp \left(\log(\lambda_{x_2}/\lambda_{x_1}) c(M(Z_0)) \cdot \left(\frac{c_s}{F(s)}-\frac{c_y}{F(y)} \right)\right) = \frac{H(P(x_2))_{s}^{1/F(s)}}{H(P(x_2))_{y}^{1/F(y)}} 
\end{equation}
for all $s \in Y$. Consider an element $s_2 \in Z'_2 \backslash Z_0$.
Since $y\in Z'_1$, 
$$
\frac{H(P(x_1))_{s_2}^{1/F(s_2)}}{H(P(x_1))_{y}^{1/F(y)}} \leq 1
$$
and since $s_2 \in Z'_2$,
$$
\frac{H(P(x_2))_{s_2}^{1/F(s_2)}}{H(P(x_2))_{y}^{1/F(y)}} = 1.
$$
Furthermore, it follows from \eqref{1.3} in Lemma \ref{l204} that 
$c(M(Z_0))\cdot \left(\frac{c_{s_2}}{F(s_2)}-\frac{c_y}{F(y)} \right) < 0$,
and \eqref{e1703} therefore implies that $\lambda_{x_1} = \lambda_{x_2}$. It follows then first from \eqref{e1703} and Lemma \ref{forhold} that $H(P(x_1)) = H(P(x_2))$, and then by the induction hypothesis that $P(x_1) = P(x_2)$. Hence $x_1 = x_2$.
\end{proof}

To prove that $H$ is surjective we use the following

\begin{defn} Let $M(Z)$ be a polyhedral cone.
We say that $H$ is \emph{surjective on $M(Z)$} if for every $t \in N_{\infty}$ associated to a sequence $\{v_{m}\}_{m \in \mathbb{N}} \subseteq  M(Z)$, there is some $x \in S^{n-1} \cap M(Z)$ with $H(x)=t$. We then say $H$ is \emph{surjective on the $k$-skeleton} when it is surjective on all polyhedral cones of dimension $ \leq k$.
\end{defn}

\begin{lemma}\label{sur1}
$H$ is surjective on the $1$-skeleton.
\end{lemma}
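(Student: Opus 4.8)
The plan is to reduce everything to Lemma \ref{l401} and Lemma \ref{fremog}. First note that a polyhedral cone of dimension $0$ is $\{0\}$, and a sequence contained in $\{0\}$ is eventually constant equal to $0$, so its $\beta$-values are constant and cannot tend to $\infty$; hence no $t \in N_{\infty}$ is associated to a sequence in a $0$-dimensional cone and surjectivity on such a cone is vacuous. So the content is entirely the $1$-dimensional case.

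Fix a $1$-dimensional polyhedral cone $M(Z)$, and after replacing $Z$ by its maximal representative assume $Z$ is maximal (this does not change $M(Z)$, and it is what is needed to invoke Lemma \ref{l401}). Since $M(Z)$ is strongly convex it is a genuine ray, so $M(Z) \cap S^{n-1} = \{x\}$ for a single unit vector $x$, with $M(Z) = \{ r x : r \geq 0\}$, and by definition $H(x) = t_{0}$, the unique element supplied by Lemma \ref{l401} for this $M(Z)$. Observe that $x \in \mathrm{Int}(M(Z))$: the subspace $M(Z) - M(Z)$ is the line $\mathbb{R}x$, and the relative interior of the ray inside this line is the open ray $\{ r x : r > 0 \}$, which contains $x$ (equivalently, the only proper face of $M(Z)$ is $\{0\}$, and $x \notin \{0\}$).

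Now let $t \in N_{\infty}$ be associated with a sequence $\{v_{m}\} \subseteq M(Z)$, so $\beta(v_{m}) \to \infty$ and $\big(e^{-\beta(v_{m})F(s) + v_{m}\cdot c_s}\big)_{s \in Y} \to t$. Each $v_{m}$ has the form $v_{m} = r_{m}x$ with $r_{m} \geq 0$; since $\beta(v_{m}) \to \infty$, Lemma \ref{fremog} gives $r_{m} = \lVert v_{m} \rVert \to \infty$ (in particular $r_{m} > 0$ for large $m$). Because $x \in \mathrm{Int}(M(Z))$, the last assertion of Lemma \ref{l401} yields $\lim_{r \to \infty} e^{-\beta(rx)F(s) + rx \cdot c_s} = (t_{0})_{s}$ for every $s \in Y$; evaluating this limit along the subsequence $r = r_{m} \to \infty$ gives $\big(e^{-\beta(v_{m})F(s) + v_{m}\cdot c_s}\big)_{s \in Y} \to t_{0}$. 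Comparing the two limits forces $t = t_{0} = H(x)$, and $x \in S^{n-1} \cap M(Z)$, which is exactly what ``$H$ surjective on $M(Z)$'' requires.

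The only points needing care are bookkeeping: passing to the maximal $Z$ so Lemma \ref{l401} applies, checking $x$ lies in the relative interior of the ray, and noting that a limit as $r \to \infty$ descends to any sequence $r_{m} \to \infty$. There is no genuine obstacle at this level; the $1$-skeleton carries essentially no geometry beyond what Lemma \ref{l401} already encodes, and the real difficulty in establishing surjectivity of $H$ will appear only in the inductive step for higher skeletons, where one must handle the decomposition of Lemma \ref{opsplit} and the defining relation \eqref{33}.
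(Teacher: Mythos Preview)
Your proof is correct and follows essentially the same approach as the paper's: write the $1$-dimensional cone as $\{rx : r \geq 0\}$ for its unique unit vector $x$, express $v_m = r_m x$, and invoke Lemma \ref{l401} to identify the limit with $H(x)$. You add useful detail the paper leaves implicit (the vacuity of the $0$-dimensional case, the explicit appeal to Lemma \ref{fremog} for $r_m \to \infty$, and the check that $x \in \mathrm{Int}(M(Z))$), but the core argument is identical.
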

\begin{proof}
Let $Z\subseteq {Y}$ satisfy that $M(Z)$ is $1$-dimensional. Then $S^{n-1}\cap M(Z)=\{v\}$. Assume that some $t \in N_{\infty}$ has an associated sequence $\{v_{m}\}_{m \in \mathbb{N}} \subseteq M(Z)$. Then $v_{m}/\lVert v_{m} \rVert = v$ for all $m$ and hence
$v_{m}= \lVert v_{m} \rVert v$. It follows then from Lemma \ref{l401} that $t =H(v)$. 
\end{proof}

\begin{lemma}\label{sur2}
Let $k>1$. If $H$ is surjective on the $(k-1)$ skeleton, it is also surjective on the $k$-skeleton.
\end{lemma}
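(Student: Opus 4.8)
The plan is to argue by induction on $k$, the base case $k=1$ being Lemma~\ref{sur1}. Fix a $k$-dimensional polyhedral cone $M(Z)$ with $Z$ maximal and let $t\in N_\infty$ be associated with a sequence $\{v_m\}\subseteq M(Z)$; I must produce $x\in S^{n-1}\cap M(Z)$ with $H(x)=t$. First I would reduce to the case $v_m\in\mathrm{Int}(M(Z))$ for all $m$. Discarding the finitely many $m$ with $v_m=0$ (for which $\beta(v_m)$ is bounded), each $v_m$ lies in $\mathrm{Int}(M(Z_m))$ for the maximal set $Z_m=\{s: v_m\cdot c_s/F(s)\ \text{maximal}\}$ by \eqref{1.3} of Lemma~\ref{l204}; since $v_m\in M(Z)$, a face argument shows $M(Z_m)$ is a face of $M(Z)$, so $Z\subseteq Z_m$, and after passing to a subsequence $Z_m=Z'$ is constant and $M(Z')$ is a face of $M(Z)$ containing $\{v_m\}$. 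If $\dim M(Z')<k$ the induction hypothesis applied to $M(Z')$ already gives $x\in S^{n-1}\cap M(Z')\subseteq S^{n-1}\cap M(Z)$ with $H(x)=t$; otherwise $M(Z')=M(Z)$, i.e.\ $v_m\in\mathrm{Int}(M(Z))$ for all $m$. By Lemma~\ref{niceface}, $t_z\neq 0$ and $t_z^{1/F(z)}=t_y^{1/F(y)}\geq t_s^{1/F(s)}$ for all $z,y\in Z$, $s\in Y$, so $Z\subseteq\supp(t)$; if $\supp(t)=Z$ then $t$ satisfies \eqref{3.1} and \eqref{3.2} of Lemma~\ref{l401} for $M(Z)$, hence $t=H(c(M(Z)))$ and $x=c(M(Z))$ works. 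So from now on fix $s_0\in\supp(t)\setminus Z$.

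Discard the $m$ for which $v_m$ is a positive multiple of $c(M(Z))$: if there were infinitely many, the ray statement of Lemma~\ref{l401} would force $t=H(c(M(Z)))$, whose support is $Z$, contradicting the choice of $s_0$. For the remaining $m$ I can apply Lemma~\ref{opsplit} to $v_m/\lVert v_m\rVert$ and scale up to write $v_m=A_m\,c(M(Z))+B_m u_m$ with $A_m,B_m\geq 0$ and $u_m\in\mathrm{bd}(M(Z))\cap S^{n-1}$. The key claim is that $\{A_m\}$ is bounded. Pass to a subsequence so that all $u_m$ lie in a single facet $M(Z_2)$ of $M(Z)$ (so $Z\subseteq Z_2$ and $\dim M(Z_2)=k-1$). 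For $z\in Z$ one has $v_m\cdot\bigl(c_{s_0}/F(s_0)-c_z/F(z)\bigr)=A_m\gamma+B_m\eta_m$, where $\gamma:=c(M(Z))\cdot\bigl(c_{s_0}/F(s_0)-c_z/F(z)\bigr)<0$ because $s_0\notin Z$ and $c(M(Z))\in\mathrm{Int}(M(Z))$, and $\eta_m:=u_m\cdot\bigl(c_{s_0}/F(s_0)-c_z/F(z)\bigr)\leq 0$ because $z\in Z_2$ and $u_m\in M(Z_2)$. The left side converges to $\log t_{s_0}^{1/F(s_0)}-\log t_z^{1/F(z)}$, which is finite since $s_0,z\in\supp(t)$; as the two summands on the right are each $\leq 0$, it follows that $A_m\gamma$, hence $A_m$, is bounded. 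Passing to a subsequence, $A_m\to r^*\geq 0$, and then $B_m=\lVert v_m-A_m c(M(Z))\rVert\to\infty$ since $\lVert v_m\rVert\to\infty$ (Lemma~\ref{fremog}).

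To finish, set $y_m:=v_m-A_m c(M(Z))=B_m u_m\in M(Z_2)$. Since $\lVert y_m\rVert\to\infty$ we get $\beta(y_m)\to\infty$, so after a further subsequence $\{y_m\}$ is associated with some $t^*\in N_\infty$; as $\dim M(Z_2)=k-1$, the induction hypothesis gives $u'\in S^{n-1}\cap M(Z_2)\subseteq\mathrm{bd}(M(Z))\cap S^{n-1}$ with $H(u')=t^*$. Because $v_m-r^*c(M(Z))=y_m+(A_m-r^*)c(M(Z))$ with $A_m-r^*\to 0$, comparing the equations defining $\beta(y_m)$ and $\beta\bigl(v_m-r^*c(M(Z))\bigr)$ (and using $\sum_s t^*_s=1$) shows $\{v_m-r^*c(M(Z))\}$ is also associated with $t^*$; comparing this with $\{v_m\}$, which is associated with $t$, produces a constant $\delta$ with $t_s^{1/F(s)}=e^{-\delta}H(u')_s^{1/F(s)}e^{\,r^*c(M(Z))\cdot c_s/F(s)}$ for all $s\in Y$. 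Forming the ratios over $s\in Y$ and $z\in Z$ gives exactly relation \eqref{33} with $\lambda=e^{-r^*}$; since $t\in\Delta_Y$ and $t_z\neq 0$, Lemma~\ref{forhold} identifies $t$ with the element produced by Lemma~\ref{kdef1403} at the point $x=\bigl((1-e^{-r^*})c(M(Z))+e^{-r^*}u'\bigr)/\lVert(1-e^{-r^*})c(M(Z))+e^{-r^*}u'\rVert$ of $S^{n-1}\cap M(Z)$ when $r^*>0$, and with $H(u')$ (so $x=u'$) when $r^*=0$. In both cases $x\in S^{n-1}\cap M(Z)$ and $H(x)=t$.

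I expect the main obstacle to be precisely the boundedness of $\{A_m\}$ — the fact that $v_m$ can be brought onto $\mathrm{bd}(M(Z))$ by subtracting a \emph{bounded} multiple of the center $c(M(Z))$ — because only such a fixed shift is compatible with the recursive definition of $H$ on $\mathrm{Int}(M(Z))$ in Lemma~\ref{kdef1403}; the subsidiary technical point is controlling the vanishing perturbation $A_m-r^*\to 0$ when identifying the limit $t^*$ with $H(u')$ and then transporting the resulting identity back to $t$.
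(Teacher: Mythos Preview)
Your argument is correct and follows essentially the same route as the paper. Both proofs decompose $v_m$ along the center $c(M(Z))$ and a boundary direction, then split into the case where the center-coefficient is bounded (leading to the inductive step and the identification via \eqref{33} and Lemma~\ref{forhold}) and the case where it is not (leading to $t=H(c(M(Z)))$). You simply run the second case first, in contrapositive form: you assume $\operatorname{supp}(t)\supsetneq Z$ and use $s_0\in\operatorname{supp}(t)\setminus Z$ to force $\{A_m\}$ bounded, whereas the paper assumes $\{q_m(1-\lambda_m)\}$ unbounded and deduces $\operatorname{supp}(t)=Z$. Your preliminary reduction to $v_m\in\operatorname{Int}(M(Z))$ is valid but unnecessary. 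The one place where the paper is slightly cleaner is your step~9: instead of controlling the perturbation $A_m-r^*\to 0$ between $y_m$ and $v_m-r^*c(M(Z))$, the paper invokes \eqref{6.1} of Lemma~\ref{kdef1403} for the constant shift $w_m=y_m+r^*c(M(Z))$ and then compares the ratios $t_s^{1/F(s)}/t_z^{1/F(z)}$ and $H(u)_s^{1/F(s)}/H(u)_z^{1/F(z)}$ directly using \eqref{ecase1}--\eqref{ecase2}, which avoids having to argue that $\beta(y_m+\epsilon_m c)-\beta(y_m)\to 0$.
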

\begin{proof} Let $M(Z)$ be a $k$-dimensional polyhedral cone with $Z$ maximal. Consider an element $t \in N_{\infty}$ with a sequence $\{v_{m}\} \subseteq M(Z)$ associated to $t$. If $v_{m} /\lVert v_{m} \rVert = c(M(Z))$ for infinitely many $m$ it follows from Lemma \ref{l401} as in the proof of Lemma \ref{sur1} that $t=H(c(M(Z)))$. By passing to a subsequence we may therefore assume that $v_{m} /\lVert v_{m} \rVert \neq c(M(Z))$ for all $m$. Since $M(Z)$ has a boundary consisting of a finite set of facets, we can by possibly going to a subsequence assume that there is some $Z' \supsetneq Z$ with $M(Z')$ a facet of $M(Z)$, and $P(v_{m} /\lVert v_{m} \rVert) \in M(Z')$ for all $n$. Set $\lambda_{m} = \lambda_{v_{m}/\lVert v_{m} \rVert}$ and $a_{m} =P(v_{m} /\lVert v_{m} \rVert)$. Then 
$$
\frac{v_{m}}{\lVert v_{m} \rVert} =\frac{(1-\lambda_{m})c(M(Z))+\lambda_{m}a_{m}}{\lVert (1-\lambda_{m})c(M(Z))+\lambda_{m}a_{m} \rVert} .
$$
Setting 
$$
q_{m} := \frac{\lVert v_{m} \rVert}{\lVert (1-\lambda_{m})c(M(Z))+\lambda_{m}a_{m} \rVert} 
$$
we have that 
$$
v_{m}= q_{m}(1-\lambda_{m})c(M(Z))+q_{m}\lambda_{m}a_{m} \qquad \forall m \in \mathbb{N} .
$$
We divide the proof into two cases.

\emph{Case 1: Assume $\{q_{m}(1-\lambda_{m})\}_{m}$ is bounded.} Passing to a subsequence we can then assume that there is a $q\geq 0$ with $\lim_{m}q_{m} (1-\lambda_{m})=q$. Note that $\lambda_{m} \to 1$ since $q_{m} \to \infty$. We can therefore assume that $q_{m}\lambda_{m}a_{m}$ is associated with a $\tilde{t} \in N_{\infty}$. Since $q_{m}\lambda_{m}a_{m}\in M(Z')$ for all $n$, there is some $x \in M(Z') \cap S^{n-1}$ with $H(x)=\tilde{t}$ by our induction hypothesis. Set $\lambda := \exp\left( -q  \right)$ and consider
$$
u:= \frac{(1-\lambda)c(M(Z)) + \lambda x}{\lVert (1-\lambda)c(M(Z)) + \lambda x \rVert} \in M(Z) \cap S^{n-1}.
$$
Then $P(u)=x$ and by \eqref{6.1} of Lemma \ref{kdef1403} $H(u)$ is associated with some subsequence of $w_{m}:=q_{m}\lambda_{m}a_{m}+qc(M(Z)) \in M(Z)$. Fix $z\in Z$.  For any $s \in {Y}$ we have that
\begin{align} \label{ecase1}
\nonumber &\frac{1}{F(s)}\left(-\beta(v_{m})F(s)+v_{m}\cdot c_s  \right)-\frac{1}{F(z)}\left(-\beta(v_{m})F(z)+v_{m}\cdot c_z  \right) = v_{m} \cdot \left( \frac{c_s}{F(s)} -\frac{c_z}{F(z)} \right) \\
&=q_{m}\lambda_{m}a_{m} \cdot \left( \frac{c_s}{F(s)} -\frac{c_z}{F(z)} \right)+q_{m}(1-\lambda_{m})c(M(Z)) \cdot \left( \frac{c_s}{F(s)} -\frac{c_z}{F(z)} \right)
\end{align} 
and
\begin{align} \label{ecase2}
\nonumber &\frac{1}{F(s)}\left(-\beta(w_{m})F(s)+w_{m}\cdot c_s  \right)-\frac{1}{F(z)}\left(-\beta(w_{m})F(z)+w_{m}\cdot c_z  \right) = w_{m} \cdot \left( \frac{c_s}{F(s)} -\frac{c_z}{F(z)} \right) \\
&=q_{m}\lambda_{m}a_{m} \cdot \left( \frac{c_s}{F(s)} -\frac{c_z}{F(z)} \right)+qc(M(Z)) \cdot \left( \frac{c_s}{F(s)} -\frac{c_z}{F(z)} \right)
\end{align}
for all $m$. Note that $t_z \neq 0\neq H(u)_z$ by Lemma \ref{niceface}. Since $q_m(1-\lambda_m) \to  q$ it follows from \eqref{ecase1} and \eqref{ecase2} that
\begin{equation}
\frac{t_{s}^{1/F(s)}}{t_{z}^{1/F(z)}}=\frac{H(u)_{s}^{1/F(s)}}{H(u)_{z}^{1/F(z)}} \quad \forall s \in {Y} .
\end{equation}
Then Lemma \ref{forhold} implies that $H(u)=t$.

\emph{Case 2: Assume $\{q_{m}(1-\lambda_{m})\}_{m}$ is unbounded.}  Passing to a subsequence we can assume that this sequence diverges to $+\infty$. Fix $z \in Z$ and let $s \notin Z$. Then
$$
c(M(Z)) \cdot \left( \frac{c_s}{F(s)}-\frac{c_z}{F(z)} \right) <0\quad \text{ and } \quad a_{m} \cdot \left( \frac{c_s}{F(s)}-\frac{c_z}{F(z)} \right) \leq 0 
$$
and hence
\begin{align} \label{ecase12}
\nonumber &\frac{1}{F(s)}\left(-\beta(v_{m})F(s)+v_{m}\cdot c_s  \right)-\frac{1}{F(z)}\left(-\beta(v_{m})F(z)+v_{m}\cdot c_z  \right)\\
&=q_{m}\lambda_{m}a_{m} \cdot \left( \frac{c_s}{F(s)} -\frac{c_z}{F(z)} \right)+q_{m}(1-\lambda_{m})c(M(Y)) \cdot \left( \frac{c_s}{F(s)} -\frac{c_z}{F(z)} \right) \to -\infty .
\end{align}
Since $t_z\neq 0$ it follows from \eqref{ecase12} that $t_s = 0$.  It follows from Lemma \ref{niceface} that $t_z^{1/F(z)} = t_s^{1/F(s)}$ when $s \in Z$ and we can therefore now conclude from Lemma \ref{l401} that $t = H(c(M(Z))$.
\end{proof}

Since the $n$-skeleton is all of $S^{n-1}$ it follows from Lemma \ref{sur1} and Lemma \ref{sur2} that $H : S^{n-1} \to N_{\infty}$ is surjective, completing the proof of Theorem \ref{Sn-1}.

\section{Two examples}

\subsection{The Heisenberg group} The Heisenberg group $\mathrm{H}_3$ is the subgroup of $\mathrm{Sl}_3(\mathbb Z)$ of matrices of the form
\begin{equation}\label{A}
\left( \begin{matrix} 1 & a & c \\ 0 & 1 & b \\ 0 & 0 & 1 \end{matrix} \right).
\end{equation}
It is wellknown that $\mathrm{H}_3$ is nilpotent and finitely generated.
The canonical set $Y$ of 6 generators consists of the elements
$$
\left( \begin{matrix} 1 & \pm 1 & 0 \\ 0 & 1 & 0\\ 0 & 0 & 1 \end{matrix} \right), \  \left( \begin{matrix} 1 & 0 & 0 \\ 0 & 1 & \pm 1 \\ 0 & 0 & 1 \end{matrix} \right) \ \text{and} \   \left( \begin{matrix} 1 & 0 & \pm 1 \\ 0 & 1 & 0 \\ 0 & 0 & 1 \end{matrix} \right).
$$
We consider the gauge action on $O_Y(\mathrm{H}_3)$, i.e. we set $F(y) = 1$ for all $y \in Y$. There is a homomorphism $\mathrm{H}_3 \to \mathbb Z^2$ sending the matrix \eqref{A} to $(a,b)$ and the kernel of this homomorphism is the commutator group in $\mathrm{H}_3$. It follows that $\Hom (\mathrm{H}_3,\mathbb R)$ is spanned by $c'_1$ and $c'_2$ where
$$
c'_1 \left( \begin{matrix} 1 & a & c \\ 0 & 1 & b \\ 0 & 0 & 1 \end{matrix} \right) = a \ \text{and} \ c'_2 \left( \begin{matrix} 1 & a & c \\ 0 & 1 & b \\ 0 & 0 & 1 \end{matrix} \right) = b.
$$
It is then straightforward to apply the results of the previous sections to deduce the following.
\begin{enumerate}
\item[$\bullet$] There are no $\beta$-KMS states for the gauge action on $O_Y(\mathrm{H}_3)$ when $\beta < \log 6$.
\item[$\bullet$] There is a unique $\log 6$-KMS state for the gauge action on $O_Y(\mathrm{H}_3)$. 
\item[$\bullet$] When $\beta > \log 6$ the simplex of $\beta$-KMS states for the gauge action on $O_Y(\mathrm{H}_3)$ is affinely homeomorphic to the simplex of Borel probability measures on the circle $S^1$.
\item[$\bullet$] The set of $\mathrm{KMS}_{\infty}$ states for the gauge action on $O_Y(\mathrm{H}_3)$ is a convex set affinely homeomorphic to the simplex of Borel probability measures on the circle $S^1$.
\end{enumerate}

\subsection{The infinite dihedral group}\label{Dinfty} The infinite dihedral group $\mathbb D_{\infty}$ is generated by two elements $a,b$ where $b^2 = 1,\  bab = a^{-1}$. With $Y = \{a,b\}$ the Cayley graph $\Gamma\left(\mathbb D_{\infty}, Y\right)$  is the following graph.


\begin{equation*}
  \begin{gathered}
    \begin{xymatrix}{ \hdots \ar[r] & a^{-3} \ar[d]\ar[r] & a^{-2}
        \ar[d] \ar[r] & a^{-1} \ar[d] \ar[r] & e_0 \ar[r] \ar[d] & a
        \ar[r] \ar[d] & a^2 \ar[d]
        \ar[r]  & a^3 \ar[r] \ar[d] & \hdots \\
        \hdots & \ar[l] a^{-3}b \ar@/^/[u] & \ar[l] a^{-2}b \ar@/^/[u]
        & \ar[l] a^{-1}b \ar@/^/[u] & \ar[l] b \ar@/^/[u] & \ar[l] ab
        \ar@/^/[u] & \ar[l] a^2b \ar@/^/[u] & \ar[l] a^{3}b \ar@/^/[u]
        & \ar[l]\hdots }\end{xymatrix}
  \end{gathered}
\end{equation*}

\bigskip
We consider the gauge action on $O_Y\left(\mathbb D_{\infty}\right)$, i.e. we set $F(a) = F(b) = 1$. When $\psi : \mathbb D_{\infty} \to \mathbb R$ is a vector, set
\begin{equation}\label{xyseq}
\psi_{a^n} = x_n\ \text{and} \ 
\psi_{a^nb} = y_n, \ n \in \mathbb Z.
\end{equation}
Then $\psi$ is a $\beta$-harmonic vector iff 
\begin{enumerate}
\item[a)] $x_n \geq 0$, \ $y_n \geq 0$,
\item[b)] $e^{\beta} x_n = x_{n+1} + y_n$, and  
\item[c)] $e^{\beta}y_n = x_n  + y_{n-1}$   
\end{enumerate}
for all $n \in \mathbb Z$. Note that b) and c) are equivalent to b') and c'), where
\begin{enumerate}
\item[b')] $e^{\beta} x_n = x_{n+1} + x_{n-1}$  
\end{enumerate}
and
\begin{enumerate}
\item[c')] $y_{n+1} = x_n$.   
\end{enumerate}
The positive solutions to b') are exactly the $\beta$-harmonic functions on $\mathbb Z$ when we choose $Y = \{-1,1\}$ and $F(-1) = F(1) = 1$ and can be found using the results from Section \ref{AbelKMS}. In the notation used in that section, and with $c'_1 \in \Hom (\mathbb Z,\mathbb R)$ the identity map, we see that the $u(\beta)$ of Lemma \ref{uniqueu} is the solution to the equation
$e^{u(\beta) - \beta} - e^{-u(\beta) - \beta} = 0$, i.e. $ u(\beta) = 0$. We are in Case \ref{case1} when $e^{u(\beta) -  \beta} + e^{-u(\beta)-\beta} = 2e^{-\beta} > 1$, i.e. when $\beta < \log 2$, in Case \ref{case2} when $\beta = \log 2$ and in Case \ref{case3} when $\beta > \log 2$.
When $\beta = \log 2$, $Q(\beta)$ contains only the zero homomorphism which means that $x_n = 1$ for all $n \in \mathbb Z$ and hence also $y_n = 1$ for all $n \in \mathbb Z$ according to c'). The corresponding $\log 2$-KMS measure on $\{a,b\}^{\mathbb N}$ is the Bernoulli measure corresponding to the probability measure $m_0$ on $\{a,b\}$ such that $m_0(\{a\}) = m_0(\{b\}) = 1/2$. 
When $\beta > \log 2$,
$$
Q(\beta) \simeq \left\{ c \in \mathbb R: \ e^{c } + e^{-c} = e^{\beta} \right\} = \left\{-c(\beta), c(\beta)\right\},
$$
where $c(\beta)  > 0$ is the positive solution to $e^c + e^{-c} = e^{\beta}$. It follows therefore from Theorem \ref{Q-descr} and c') that the set of $\beta$-harmonic vectors on $\mathbb D_{\infty}$ are parametrized by the interval
$\left[0,1\right]$ such that $0 \leq t \leq 1$ corresponds to the solution
\begin{equation}\label{solutions}
x_n = te^{c(\beta)n} + (1-t)e^{-c(\beta) n}, \ \ y_n = te^{c(\beta)(n-1)} + (1-t)e^{-c(\beta) (n-1)} \ 
\end{equation}
for all $n \in \mathbb Z$. It follows that for $\beta > \log 2$ the simplex of normalized $\beta$-KMS states is affinely homeomorphic to $[0,1]$. By Proposition \ref{Gabbb} the only abelian KMS state is the $\log 2$-KMS measure.

It is not difficult to identify the set of $\mathrm{KMS}_{\infty}$ states as a convex set with two extreme points; limits of the two extreme $\beta$-KMS states as $\beta \to \infty$. The Borel probability measures on $Y^{\mathbb N} = \{a,b\}^{\mathbb N}$ corresponding to the extreme $\mathrm{KMS}_{\infty}$ states are the Dirac measures at $a^{\infty} \in \{a,b\}^{\mathbb N}$ and $ba^{\infty} \in \{a,b\}^{\mathbb N}$; the two geodesic paths emitted from $e_0$ in the graph above.

\end{document}